\newtheorem{theorem}{Theorem}
\newtheorem{lemma}[theorem]{Lemma}
\newtheorem{corollary}[theorem]{Corollary}
\newtheorem{proposition}[theorem]{Proposition}
\newtheorem{definition}[theorem]{Definition}
\newtheorem{notation}[theorem]{Notation}
\newtheorem{example}[theorem]{Example}
\newtheorem{remark}[theorem]{Remark}
\newcommand{\eqnum}{\refstepcounter{equation}\textup{\tagform@{\theequation}}}
\DeclareMathOperator{\IntegralBasisForWeierstrassPolynomial}{\mathtt{MergingIntegralBases}}
\DeclareMathOperator{\MergeCoefficients}{\mathtt{CoefficientsForMerging}}
\DeclareMathOperator{\TruncatedFactor}{\mathtt{IntegralBasisElement}}
\DeclareMathOperator{\SeparateUnit}{\mathtt{SeparateUnit}}
\DeclareMathOperator{\HenselLift}{\mathtt{HenselLift}}
\DeclareMathOperator{\Splitting}{\mathtt{Splitting}}
\DeclareMathOperator{\BlockSplitting}{\mathtt{BlockSplitting}}
\DeclareMathOperator{\SegmentSplitting}{\mathtt{SegmentSplitting}}
\DeclareMathOperator{\Spec}{Spec}
\DeclareMathOperator{\Sing}{Sing}
\DeclareMathOperator{\Ann}{Ann}
\DeclareMathOperator{\Int}{Int}
\DeclareMathOperator{\Hom}{Hom}
\DeclareMathOperator{\Proj}{Proj}
\DeclareMathOperator{\sing}{sing}
\DeclareMathOperator{\TQR}{Q}
\newcommand{\Q}{{\mathbb Q}}
\newcommand{\N}{{\mathbb N}}
\newcommand{\CC}{{\mathbb C}}
\newcommand{\Z}{{\mathbb Z}}
\newcommand{\degbound}{{e}}
\begin{document}

\title{Computing integral bases via localization and Hensel lifting}

\author{Janko B\"ohm}
\address{Department of Mathematics,  University of Kaiserslautern\\
Erwin-Schr\"odinger-Str. - (67663) Kaiserslautern, Germany}
\email{boehm@mathematik.uni-kl.de}

\author{Wolfram Decker}
\address{Department of Mathematics,  University of Kaiserslautern\\
Erwin-Schr\"odinger-Str. - (67663) Kaiserslautern, Germany}
\email{decker@mathematik.uni-kl.de}

\author{Santiago Laplagne}
\address{Departamento de Matem\'atica, FCEyN, Universidad de Buenos Aires \\
Ciudad Universitaria Pabell\'on I - (C1428EGA) - Buenos Aires, Argentina}
\email{slaplagn@dm.uba.ar}

\author{Gerhard Pfister}
\address{Department of Mathematics,  University of Kaiserslautern\\
Erwin-Schr\"odinger-Str. - (67663) Kaiserslautern, Germany}
\email{pfister@mathematik.uni-kl.de}

\begin{abstract}
We present a new algorithm for computing integral bases in algebraic function
fields of one variable, or equivalently for constructing the normalization of a plane curve.
Our basic strategy makes use of the concepts of localization and completion,
together with the Chinese remainder theorem, to reduce the problem to the
task of finding integral bases for the branches of each singularity of the curve.
To solve the latter task, in turn, we work
with suitably truncated Puiseux expansions. In contrast to van Hoeij's
algorithm \citep{vanHoeij94}, which also relies on Puiseux expansions (but
pursues a different strategy), we use Hensel's lemma as a key ingredient. This
allows us at some steps of the algorithm to compute factors corresponding to
conjugacy classes of Puiseux expansions, without actually computing the
individual expansions. In this way, we make substantially less use of the
Newton-Puiseux algorithm. In addition, our algorithm is inherently parallel.
As a result, it outperforms in most cases any other algorithm known to us by
far. Typical applications are the computation of adjoint ideals \citep{BDLS2}
and, based on this, the computation of Riemann-Roch spaces and the
parametrization of rational curves.
\end{abstract}

\keywords{Normalization, integral closure, integral basis, curve singularity, Puiseux series}

\maketitle

\section{Introduction}

\label{sect:intro} Let $A$ be a reduced Noetherian ring, and let $\TQR(A)$ be
its total ring of fractions. The {\emph{normalization}} of $A$ is the integral
closure of $A$ in $\TQR(A)$. We denote the normalization by $\overline{A}$ and
call $A$ {\emph{normal}} if $A=\overline{A}$. Recall that if $A$ is a reduced
\emph{affine} (that is, finitely generated) algebra over a field $K$, then $\overline{A}$
is a finite $A$-module by the splitting of normalization  (see \citet[Theorem 1.5.20]{JP})
and Emmy Noether's finiteness result below (see \citet[Chapter 13]{Eis} for a proof):

\begin{theorem}[Emmy Noether] Let $R$ be a Noetherian domain
and let $L$ be a finite extension field of $F=\TQR(R)$. Suppose that
$R$ is an affine domain over a field $K$ or that $R$ is normal and $L$ is separable over $F$.
Then the integral closure $S$ of $R$ in $L$ is a finitely generated $R$-module.
\end{theorem}

\begin{remark}
\label{rem:free-ness-over-pid}
With notation and assumptions as in the theorem, suppose that $R$ is a PID.
Then $S$, as a finitely generated torsion-free module over a PID, is free. In fact, it is free of rank
$[L:F]$ since a set of free generators for $S$ over $R$ is also a basis for
$L=SF$ over $F$.
\end{remark}

\begin{definition}
If $R\subset T$ is a ring extension such that the integral closure $S$
of $R$ in $T$ is a free $R$-module, then we call any set of free generators for $S$
over $R$ an {\emph{integral basis}} for $S$ over $R$.
\end{definition}

In this paper, we focus on  the case where $A$ is the coordinate ring
of an algebraic curve defined over a field $K$ of characteristic zero.
More precisely, let $f\in K[X,Y]$ be an irreducible polynomial
in two variables, let $C\subset\mathbb{A}^{2}(K)$ be the affine plane curve
defined by $f$, and let
\[
A=K[C]=K[X,Y]/\langle f(X,Y)\rangle
\]
be the {\emph{coordinate ring}} of $C$. We write $x$ and $y$ for the residue
classes of $X$ and $Y$ modulo $f$, respectively. Throughout the paper, we
suppose that $f$ is monic in $Y$ (due to Noether normalization, this can
always be achieved by a linear change of coordinates). Then the
{\emph{function field}} of $C$ is
\[
K(C)=\TQR(A)=K(x)[y]=K(X)[Y]/\langle f(X,Y)\rangle,
\]
where $x$ is a separating transcendence basis of $K(C)$ over $K$, and $y$ is
integral over $K[x]$, with integrality equation $f(x,y)=0$.
Indeed, we have the isomorphism $\TQR(K[x][y]) \rightarrow  K(x)[y]$ defined by
mapping $1/h(x,y) \mapsto b(x,y) / x^c$, where $X^c = a f + b h\in K[X][Y]$
is a representation which arises from a B\'ezout identity in $K(X)[Y]$ by clearing denominators.

In particular, $A$
is integral over $K[x]$, which implies that $\overline{A}$ coincides with the
integral closure of $K[x]$ in $K(C)$. We may, hence,
represent $\overline{A}$ either by generators over $A$ or by generators over
$K[x]$. Note that by Remark \ref{rem:free-ness-over-pid}, $\overline{A}$
is a free $K[x]$-module of rank
\[
n:=\deg_{Y}(f)=[K(C):K(x)].
\]

\begin{remark}
\label{rem:spec-int-basis}
In the context outlined above, there exist
polynomials $p_{i} \in K[X][Y]$ of degree $i$ in $Y$ and polynomials
$d_i\in K[X]$ such that
\[
\left\{1,\frac{p_{1}(x,y)}{d_1(x)},\dots,\frac{p_{n-1}(x,y)}{d_{n-1}(x)}\right\}
\]
is an integral basis 
 for $\overline{A}$ over $K[x]$. In fact, such a
basis is obtained from any given set of
$K[x]$-module generators for $\overline{A}$ by unimodular row operations over
the PID $K[X]$: Represent the given generators by polynomials of type
$c_{i}=\sum_{j=0}^{n-1}c_{ij}Y^{n-1-j}$, with coefficients $c_{ij}\in K(X)$.
Then take $d$ to be the least common denominator of the $c_{ij}$,
transform the matrix $(d\cdot c_{ij})$ into Hermite normal form $(p_{ij})$,
set $\widetilde{p}_{i}=\sum_{j=0}^{n-1}p_{n-1-i ,j}Y^{n-1-j}$ for each $i=0,\dots, n-1$,
and let the ${p}_{i}(X,Y)/d_i(X)$ be obtained by reducing the
$\widetilde{p}_{i}(X,Y)/d(X)$ to lowest terms.
\end{remark}

\begin{remark}
\label{rem:spec-int-basis-II} The general normalization algorithms presented
in \citep{GLS10}, \citep{BDLPSS} are designed to
return an ideal $U\subset A$ together with an element $d\in A$ such that
$\overline{A}=\frac{1}{d}U\subset\TQR(A)$. Here, as will become clear in Section
\ref{sect:Int-basis-via-norm}, we may take
a generator of the elimination ideal $\langle \frac{\partial f}{\partial X},
\frac{\partial f}{\partial Y}, f \rangle \cap K[X]$ to represent $d$
(note that this ideal defines the $X$-coordinates
of the singularities of the curve defined by $f$ over
the algebraic closure $\overline{K}$).
If $u_{0}=d(x),u_{1},\dots,u_{r}$ generate the ideal $U$, the
$y^{i}u_{j}(x,y)/d(x)$, $0\leq i\leq n-1$, $0\leq j\leq r$, generate
$\overline{A}$ over $K[x]$. An integral basis is then obtained by operations
as described in the remark above.
\end{remark}

\begin{remark}
\label{rem groeb lex} In practical terms, $u_{0},\dots, u_{r}$ are given as
polynomials in $K[X,Y]$ of $Y$-degree at most $n-1$. If these polynomials,
together with $f$, form a Gr\"obner basis with respect to the lexicographical
ordering, taking $Y>X$,
then already the elements $y^{i}u_{j}(x,y)/d(x)$, $0\leq i\leq n-1-\deg
(u_{j})$, $0\leq j\leq r$, generate $\overline{A}$ over $K[x]$.
\end{remark}

\begin{example}
\label{exampleCusp} Consider the standard cusp: Let
\[
A=K[x,y]=K[X,Y]/\langle Y^{3}-X^{2}\rangle.
\]
As a module over $A$, we may represent $\overline{A}$ as
\[
\overline{A}=A \cdot\frac{y^{2}}{x}+A \cdot1 = \frac{1}{x}\left\langle y^{2},
x\right\rangle _{A}%
\]
(see \citet[Example 2.5]{GLS10}). Considering $\overline{A}$ over $K[x]$, we
get
\[
\overline{A}=K[x]\cdot\frac{y^{2}}{x}+K[x]\cdot y\cdot\frac{y^{2}}%
{x}+K[x]\cdot y^{2}\cdot\frac{y^{2}}{x}+K[x]\cdot1+K[x]\cdot y+K[x]\cdot
y^{2}.
\]
Since $y^{3}=x^{2}$ and $K[x]\cdot y^{2}\subset K[x]\cdot y^{2}/x$, we have
\[
\overline{A}=K[x]\cdot\frac{y^{2}}{x} \oplus K[x]\cdot1\oplus K[x]\cdot y.
\]
Hence, $\{1,y,y^{2}/x\}$ is an integral basis as in Remark
\ref{rem:spec-int-basis}.
\end{example}

The algorithms in \citep{GLS10}, \citep{BDLPSS} work for any reduced affine
algebra $A$ over a perfect field. They rely on the Grauert and Remmert
normalization criterion which applies in global or local settings (see
\citet{GR}, \citet[Prop.~3.6.5]{GP}, \citet[Prop.~3.3]{BDLPSS}): whereas the
algorithm in \citep{GLS10} is of global nature, the idea in \citep{BDLPSS} is to
consider a finite stratification of the singular locus $\Sing(A)$, apply a
local version of the normalization algorithm at each stratum, and find
$\overline{A}$ by putting the resulting local contributions together.
If $A$ is the coordinate ring of a curve, then $\Sing(A)$ is finite, and
we may stratify it by considering each $P\in\Sing(A)$ separately.
Computing an integral basis for $\overline{A}$ over
$K[x]$ is then equivalent to computing a local contribution to
$\overline{A}$ at each $P$.

In this paper, we present a  new method for the latter task which is custom-made
for our case of interest here. From now on, let $A=K[C] = K[x][y]$ be the coordinate
ring of a plane curve $C$ with notation and assumptions as before. To describe the
main ideas of the new method, we suppose  for simplicity that the prime ideal
$P\in\Sing(A)$ under consideration defines a $K$-rational singularity of $C$,
and that this singularity is the origin, that is, $P=\langle x,y \rangle$.

Consider the completion of $A$ at ${\langle x \rangle}$:
$$
\widehat{A} = K[[x]][y] = K[[X]][Y] /\langle f \rangle.
$$
Since $A$ is a reduced excellent ring, $\widehat{A}$
is reduced as well (see \citet[Section 7.8]{EGA4}). 
The normalization $\overline{\widehat{A}}$ in turn is a free $K[[x]]$-module
of rank $n =\deg_{Y}(f)$. Indeed, consider the decomposition
\begin{equation}
f=f_{0}\widetilde{f}=f_{0} f_{1}\cdots f_{r}
\label{equation:branches}%
\end{equation}
given by the Weierstrass preparation theorem (see \citet{Abhyankar}, \citet{JP}).
Here, $f_{0}\in K[[X]][Y]$ is a unit in $K[[X,Y]]$, and $f_{1}, \dots, f_{r}\in K[[X]][Y]$ are irreducible
Weierstrass polynomials to which we refer as the \emph{branches} of $f$ (over $K$, centered at
the origin). It follows from Remark \ref{rem:free-ness-over-pid} that if $g$ is one of
the branches, then the normalization of the ring $B=K[[x]][y] =K[[X]][Y]/\langle g \rangle$ is a free
$K[[x]]$-module, a result which extends to the cases $g= \widetilde{f}$ and $g=f$
by the splitting of normalization. In each case, we refer to an integral basis for $B$
over $K[[x]]$ as an \emph{integral basis for $g$}. Our new algorithm
is designed so that it computes such bases $\mathcal{B}_{f_1}, \dots, \mathcal{B}_{f_r}$
for the branches of $f$, and so that it finds the desired local contribution to $\overline{A}$
at $P$ from the $\mathcal{B}_{f_i}$.

We present more details of the algorithm while outlining the structure of our paper.

To fix our ideas, in Sections \ref{sect:Int-basis-via-norm} and
\ref{sect:noem-via-loc}, we give a more detailed account of the Grauert and Remmert
type algorithms. Furthermore, we discuss an efficient criterion for detecting
whether a given point is the only singularity of the curve under
consideration. In Section \ref{sect:basic-puiseux}, we review Puiseux
expansions and their connection to integrality via valuations.

A crucial theoretical result proved in Section \ref{sect:norm-via-loc-and-compl-the-theory}  is
that $f_1,\dots, f_r$ admit integral bases of type
\[
\mathcal{B}_{f_i}= \left\{1=p^{(i)}_{0},\frac{p^{(i)}_{1}(x,y)}{x^{e^{(i)}_{1}}},\ldots,\frac{p^{(i)}_{m-1}(x,y)}{x^{e^{(i)}_{m-1}}}\right\} ,%
\]
with monic polynomials  $p^{(i)}_d\in K[X][Y]$ of degree $d$ in $Y$:  We then speak of integral bases of
\emph{monic triangular type}. Using an explicit version of the splitting of  normalization via the
Chinese remainder theorem, we show how such bases fit together to an integral basis
for $\widetilde{f} = f_{1}\cdots f_{r}$. Any such basis, in turn, can be transformed into
an integral basis $\mathcal{B}_{\widetilde{f}}$ for $\widetilde{f}$  of monic triangular type by computing
a Hermite normal form over the PID $K[[x]]$.
Going one step further, in Section \ref{sect:local-contribution}, we show how to turn
$\mathcal{B}_{\widetilde{f}}$ into  an integral basis $\mathcal{B}_{{f}}$ for ${f}$
which is of monic triangular type: Under the additional assumption
that the origin is the only singularity of $C$ with $X$-coordinate zero,
we prove that the $K[x]$-module generated by the elements of $\mathcal{B}_{{f}}$ is a ring
which is a local contribution to $A$ at $P$.

How to actually construct integral bases $\mathcal{B}_{f_i}$ for the branches is a topic of Section
\ref{sect:loc-contr-via-Hensel}, which is the algorithmic heart of the paper. Working
with suitably truncated Puiseux series, we describe an effective way to obtain the
$\mathcal{B}_{f_i}$. Though inspired by van Hoeij's paper
\citep{vanHoeij94}, our strategy is different, with Hensel lifting providing a crucial new ingredient.
In summarizing the whole algorithm, we also show that we can actually avoid the use of the
Chinese remainder algorithm when computing $\mathcal{B}_{\widetilde{f}}$ from
the $\mathcal{B}_{f_i}$. This improves the  performance of the algorithm considerably.

We have implemented our algorithm in the computer algebra system
\textsc{Singular} \citep{GPS}. In Section \ref{sec timings}, we compare its
 performance with that of the local to global Grauert and Remmert type
algorithm. We also give timings for the implementation of van Hoeij's
algorithm in \textsc{Maple} and for the variant of the Round 2 algorithm
implemented in \textsc{Magma}.

\section{The Global Normalization Algorithm}

\label{sect:Int-basis-via-norm}

In this section, we review the global version of the normalization algorithm.
To begin with, we fix our notation and present some general facts on
normalization. For this, $A$ may be any reduced Noetherian ring. We write%

\[
\Spec(A)=\{P\subset A\mid P{\text{ prime ideal}}\}
\]
for the {\emph{spectrum}} of $A$. The {\emph{vanishing locus}} of an ideal $J$
of $A$ in $\Spec(A)$ is the set $V(J)=\{P\in\Spec(A)\mid P\supset J\}$. We
denote by
\[
N(A)=\{P\in\operatorname{Spec}(A)\mid A_{P}\text{ is not normal}\}
\]
the {\emph{non-normal locus}} of $A$, and by
\[
\operatorname{Sing}(A)=\{P\in\operatorname{Spec}(A)\mid A_{P}\text{ is not
regular}\}
\]
the {\emph{singular locus}} of $A$. Then $N(A)\subset\Sing(A)$, with equality
holding if $A$ is the coordinate ring of a curve (see \citet[Theorem 4.4.9]{JP}).

\begin{definition}
The \emph{conductor} of $A$ is
\[
\mathcal{C}_{A}=\Ann_{A}(\overline{A}/A)=\{a\in A\mid a\overline{A}\subset
A\}.
\]
\end{definition}

Note that $\mathcal{C}_{A}$ is the largest ideal of $A$ which is also an ideal
of $\overline{A}$.

To emphasize the role of the conductor, we note:

\begin{lemma}
\label{lemma:role-of-cond} Let $A$ be a reduced Noetherian ring. Then
$N(A)\subset V(\mathcal{C}_{A})$. Furthermore, $\overline{A}$ is a finite
$A$-module iff $\mathcal{C}_{A}$ contains a non-zerodivisor of $A$. In this
case, $N(A)=V(\mathcal{C}_{A})$.
\end{lemma}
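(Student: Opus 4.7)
The plan is to verify the three claims one at a time, using the standard fact that formation of the integral closure in the total ring of fractions commutes with localization at a prime.

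For the inclusion $N(A)\subset V(\mathcal{C}_{A})$, I would argue by contrapositive. Suppose $P\in\Spec(A)$ satisfies $\mathcal{C}_{A}\not\subset P$, and pick $a\in\mathcal{C}_{A}\setminus P$. Then $a$ becomes a unit in $A_{P}$, and the defining property $a\overline{A}\subset A$ localizes to give $a\overline{A}_{P}\subset A_{P}$, hence $\overline{A}_{P}\subset A_{P}$. Combined with $\overline{A_{P}}=\overline{A}_{P}$ (localization and integral closure commute for reduced Noetherian rings), this yields $A_{P}=\overline{A_{P}}$, i.e. $P\notin N(A)$.

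For the equivalence about finiteness, the forward direction uses that any $b\in\overline{A}\subset\TQR(A)$ can be written as $b=s/t$ with $t$ a non-zerodivisor, so $tb\in A$. If $b_{1},\dots,b_{m}$ generate $\overline{A}$ as an $A$-module and $t_{i}b_{i}\in A$ with each $t_{i}$ a non-zerodivisor, then $d:=t_{1}\cdots t_{m}$ is a non-zerodivisor (a finite product of non-zerodivisors is a non-zerodivisor) lying in $\mathcal{C}_{A}$. For the converse, if $d\in\mathcal{C}_{A}$ is a non-zerodivisor, then $\overline{A}\subset\tfrac{1}{d}A$ as $A$-modules; since $\tfrac{1}{d}A\cong A$ is Noetherian, its submodule $\overline{A}$ is finitely generated.

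For the remaining equality $N(A)=V(\mathcal{C}_{A})$, only the inclusion $V(\mathcal{C}_{A})\subset N(A)$ is new. Assume $\overline{A}$ is a finite $A$-module with generators $b_{1},\dots,b_{m}$, and let $P\in V(\mathcal{C}_{A})$. If $A_{P}$ were normal, then each $b_{i}\in\overline{A}_{P}=A_{P}$, so there would exist $s_{i}\in A\setminus P$ with $s_{i}b_{i}\in A$. Their product $s=s_{1}\cdots s_{m}$ lies in $A\setminus P$ and satisfies $s\overline{A}\subset A$, hence $s\in\mathcal{C}_{A}\setminus P$, contradicting $\mathcal{C}_{A}\subset P$. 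The only subtle points are the bookkeeping with non-zerodivisors in a non-domain setting and the commutation of integral closure with localization; both are standard and cause no real obstacle.
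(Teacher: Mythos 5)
Your proof is correct. Note that the paper states Lemma \ref{lemma:role-of-cond} without proof, quoting it as a standard fact (it is proved in the cited normalization literature, e.g.\ in \cite{GLS10} and \cite{SH}), so there is no in-paper argument to compare against; your argument is precisely the standard one. The two places where care is needed are exactly the ones you acknowledge: the identification $\overline{A_{P}}=(\overline{A})_{P}$, which uses that for a reduced Noetherian ring the total ring of fractions behaves well under localization, i.e.\ $\TQR(A)_{P}=\TQR(A_{P})$, and the routine localization bookkeeping in the step producing $s_{i}\in A\setminus P$ with $s_{i}b_{i}\in A$ (strictly, $b_{i}/1=a_{i}/s_{i}$ in $\TQR(A)_{P}$ only gives $u_{i}s_{i}b_{i}\in A$ for some further $u_{i}\in A\setminus P$, which suffices); both are standard and your proof goes through.
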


Note, however, that $\mathcal{C}_{A}$ can only be computed a posteriori, once
$\overline{A}$ is already known.

\begin{definition}
\label{def:test-ideal} Let $A$ be a reduced Noetherian ring. A {\emph{test
ideal}} for $A$ is a radical ideal $J\subset A$ such that $V(\mathcal{C}%
_{A})\subset V(J)$. A {\emph{test pair}} for $A$ consists of a test ideal $J$
together with a non-zerodivisor $g\in J$ of $A$.
\end{definition}

Test pairs appear in the Grauert and Remmert normality criterion which is
fundamental to algorithmic normalization (see \citet{GR}, \citet[Prop.~3.6.5]{GP}).
The algorithm by de Jong (see \citet{deJong98},
\citet{DGPJ}) and its improvement, the algorithm by \cite{GLS10},
are based on this criterion, and apply to any reduced affine
algebra $A=L[X_{1},\dots,X_{n}]/I$ over a perfect field $L$. Initially, by means of
equidimensional decomposition, we may reduce to the case where $A$ is equidimensional.
In this case, since we work over a perfect field, the Jacobian
ideal\footnote{The {\emph{Jacobian ideal}} $M$ of $A=L[X_{1},\dots,X_{n}]/I$
is generated by the images of the $c\times c$ minors of the Jacobian matrix
$\big(\frac{\partial f_{i}}{\partial X_{j}}\big)$, where we suppose that $I$ is of
pure codimension $c$, and where $f_{1},\dots,f_{r}$ are generators for $I$. By the Jacobian
criterion, $V(M) = \Sing(A)$ (see \citet[Theorem 16.19]{Eis}).} $M$ of $A$ is
non-zero and contained in the conductor $\mathcal{C}_{A}$. This implies
that the radical $J=\sqrt{M}$ together with any non-zero divisor $g\in J$
of $A$ is a test pair (see \citet[Lemma 4.1]{GLS10}). Given such a pair (see
\citet[Remark 4.6]{GLS10} for how to find $g$), the idea of computing
$\overline{A}$ is to successively enlarge $A$ by finite ring extensions
$A_{i+1}\cong\Hom_{A_{i}}(J_{i},J_{i})\cong\frac{1}{g}(gJ_{i}:_{A_{i}}%
J_{i})\subset\overline{A}\subset\TQR(A)$, with $A_{0}=A$ and $J_{i}%
=\sqrt{JA_{i}}$, until the normality criterion by Grauert and Remmert allows
us to stop. The algorithm by Greuel et al. then returns an ideal $U\subset
A$ together with a power $d$ of $g$ such that $\overline{A}=\frac{1}
{d}U\subset\TQR(A)$.

\begin{remark}
\label{rem:choice-of denominator} If $M$ is non-zero and contained in
$\mathcal{C}_{A}$, then any non-zerodivisor $c\in M$ of $A$ is a valid
denominator: If  $\overline{A}=\frac{1} {d}U$ as above, then $c
\cdot\frac{1} {d}U=: U^{\prime}$ is an ideal of A, and $\frac{1} {d}U =
\frac{1} {c}U^{\prime}$.

\end{remark}

\begin{example}
\label{example:the global algorithm} Let $A$ be the coordinate ring of the
curve $C$ with defining polynomial $f(X,Y)=X^{5} -Y^{2}(Y-1)^{3}\in
{\mathbb{Q}}[X,Y]$. Then
\[
J:=\left\langle x, y\left(  y-1\right)  \right\rangle _{A}
\]
is the radical of the Jacobian ideal, so we can take $(J,x)$ as a test
pair. In its first step, the normalization algorithm yields
\[
A_{1}=\frac{1}{x}U_{1}=\frac{1}{x}\left\langle x,y(y-1)^{2}\right\rangle _{A}
\text{.}%
\]
In the next steps, we get
\[
A_{2}=\frac{1}{x^{2}}U_{2}=\frac{1}{x^{2}}\left\langle x^{2}%
,xy(y-1),y(y-1)^{2}\right\rangle _{A}%
\]
and%
\[
A_{3}=\frac{1}{x^{3}}U_{3}=\frac{1}{x^{3}}\left\langle x^{3},x^{2}%
y(y-1),xy(y-1)^{2},y^{2}(y-1)^{2}\right\rangle _{A}\text{.}%
\]
In the final step, we find that $A_{3}$ is normal and, hence, equal to
$\overline{A}$.
\end{example}

\section{Normalization of Curves via Localization}

\label{sect:noem-via-loc}

In this section, we discuss the local to global variant of the normalization
algorithm proposed by \cite{BDLPSS}. To simplify our
presentation, we focus on the case of a reduced Noetherian ring with a finite
singular locus (such as the coordinate ring of a curve). Our starting point
is Proposition \ref{prop:local-to-global} below which, as we will see
later on, is also fundamental to
our new algorithm. In formulating the proposition, if $P\in\Spec(A)$ and
$A\subset A^{\prime}\subset\overline{A}$ is an intermediate ring, we write
$A_{P}^{\prime}$ for the localization of $A^{\prime}$ at $A\setminus P\subset
A^{\prime}$.

\begin{proposition}
\label{prop:local-to-global} Let A be a reduced Noetherian ring with a finite
singular locus $\Sing(A)=\{P_{1},\dots,P_{s}\}$. For $i=1,\dots,s$, let an
intermediate ring $A\subset A^{(i)}\subset\overline{A}$ be given such that
$A^{(i)}_{P_{i}}=\overline{A _{P_{i}}}$. Then
\[
\sum_{i=1}^{s}A^{(i)}=\overline{A}.
\]

\end{proposition}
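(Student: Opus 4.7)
The plan is to prove the equality by a local-to-global argument: set $B = \sum_{i=1}^{s} A^{(i)}$, observe that $A \subset B \subset \overline{A}$, and show that the inclusion $B \subset \overline{A}$ becomes an equality after localizing at every prime of $A$. Since $\overline{A}/B$ is an $A$-module, vanishing of all its localizations at primes (or just at maximal ideals) of $A$ forces $B = \overline{A}$.

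For the localization step, I would use two standard facts: (i) localization commutes with integral closure, i.e.\ for any multiplicatively closed set $S \subset A$ one has $S^{-1}\overline{A} = \overline{S^{-1}A}$; and (ii) a regular local ring is normal (Serre's criterion). Now fix $P \in \Spec(A)$ and split into two cases. If $P \notin \Sing(A)$, then $A_P$ is regular, hence normal, and fact (i) gives $\overline{A}_P = \overline{A_P} = A_P$. Combining with $A \subset B \subset \overline{A}$ yields $A_P \subset B_P \subset \overline{A}_P = A_P$, so $B_P = \overline{A}_P$. If $P = P_i$ for some $i$, then by hypothesis $A^{(i)}_{P_i} = \overline{A_{P_i}}$, which by fact (i) equals $\overline{A}_{P_i}$. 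Since $A^{(i)} \subset B \subset \overline{A}$, localizing gives $\overline{A}_{P_i} = A^{(i)}_{P_i} \subset B_{P_i} \subset \overline{A}_{P_i}$, hence $B_{P_i} = \overline{A}_{P_i}$.

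What remains is to cover the primes of $A$ not on the above list. Since $\Sing(A) = \{P_1, \ldots, P_s\}$ is assumed to be the whole singular locus, every $P \in \Spec(A) \setminus \{P_1, \ldots, P_s\}$ falls into the first case, so the two cases together exhaust all primes of $A$. Applying the local-global principle (an inclusion of $A$-modules that is an equality at every maximal ideal is an equality), we conclude $B = \overline{A}$, as required.

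The main conceptual point — and the only step that is not purely formal — is the compatibility of normalization with localization, which lets us identify the ``analytic'' normalization $\overline{A_{P_i}}$ appearing in the hypothesis with the localization $\overline{A}_{P_i}$ of the global normalization. Everything else is bookkeeping with the two cases.
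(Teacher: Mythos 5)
Your argument is correct, and it is complete as written: setting $B=\sum_i A^{(i)}$, checking $B_P=\overline{A}_P$ at every prime $P$ (regular primes via ``regular $\Rightarrow$ normal'', the $P_i$ via the hypothesis), and invoking exactness of localization together with the local--global principle for the $A$-module $\overline{A}/B$ is exactly the right bookkeeping. The one point you rightly single out as non-formal, the identification $\overline{A}_{P}=\overline{A_{P}}$, does hold in this setting, but it deserves a word of care: since the integral closure is taken inside the total ring of fractions, one needs both that integral closure commutes with localization and that $\TQR(A)_{P}\cong \TQR(A_{P})$ for a reduced Noetherian ring (the factors of $\TQR(A)$ corresponding to minimal primes not contained in $P$ die under localization); with that remark your two cases exhaust $\Spec(A)$ and the conclusion follows. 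The paper itself gives no direct argument for this proposition: it simply cites \cite[Proposition 3.2]{BDLPSS}, which proves a more general statement in which the singular (or non-normal) locus is replaced by a finite stratification and local contributions are attached to the strata. So your proof is a genuinely self-contained alternative for the special case actually needed here (finitely many singular primes); what the citation buys the authors is the stratified version used elsewhere in \cite{BDLPSS}, while your argument buys transparency at no real cost, since the underlying mechanism --- compatibility of normalization with localization plus a local--global check --- is the same.
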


\begin{proof}
A more general result is proved in \citet[Proposition 3.2]{BDLPSS}.
\end{proof}

\begin{definition}
We call any ring $A^{(i)}$ as in the proposition a \emph{local contribution}
to $\overline{A}$ at $P_{i}$. If in addition $A^{(i)}_{P_{j}}={A _{P_{j}}}$
for $j\neq i$, we speak of a \emph{minimal local contribution} to
$\overline{A}$ at $P_{i}$.
\end{definition}

\begin{remark}
Note that  a minimal local contribution is uniquely determined since, by definition, its
localization at each $P\in\operatorname*{Spec}(A)$ is determined.
\end{remark}

Given a reduced affine algebra $A$ over a perfect field $L$ with a finite
singular locus, Proposition \ref{prop:local-to-global} allows us to split the
computation of $\overline{A}$ into local tasks at the primes $P_{i}%
\in\Sing(A)$. One way of finding the minimal local contributions $A^{(i)}$ is
to apply the local version of the normalization algorithm from \citet{BDLPSS},
which relies on a local variant of the Grauert and Remmert criterion. For each
$i$, the basic idea is to use $P_{i}$ together with a suitable element $g_{i}$
of the Jacobian ideal instead of a test pair as in Definition
\ref{def:test-ideal}.

\begin{example}
\label{example:localContribution} As in Example
\ref{example:the global algorithm}, let $A$ be the coordinate ring of the
curve $C$ with defining polynomial $f(X,Y)=X^{5} -Y^{2}(Y-1)^{3}\in
{\mathbb{Q}}[X,Y]$. Note that $C$ has a double point of type $A_{4}$ at
$(0,0)$ and a triple point of type $E_{8}$ at $(0,1)$. If we apply the
strategy above, taking $P_{1}=\left\langle x,y\right\rangle _{A}$,
$P_{2}=\left\langle y - 1, x\right\rangle _{A}$, and $g_{1} = g_{2} = x$, we
get local contributions $\frac{1}{d_{i}} U_{i}$, $i =1, 2$. Specifically,
\[%
\begin{tabular}
[c]{lll}%
$d_{1}=x^{2}$ & and & $U_{1}=\left\langle x^{2}, y(y-1)^{3}\right\rangle _{A}%
$,\\
$d_{2}=x^{3}$ & and & $U_{2}=\left\langle x^{3}, x^{2}y^{2}\left(  y-1\right)
, y^{2}\left(  y-1\right)  ^{2}\right\rangle _{A}$.
\end{tabular}
\]
Summing up the local contributions, we get $\overline{A} = \frac{1}{d}U$ with
$d=x^{3}$ and
\[
U=\left\langle x^{3},\text{ }y(y-1)^{3}x,\text{ }y^{2}\left(  y-1\right)
x^{2} ,\text{ }y^{2}\left(  y-1\right)  ^{2}\right\rangle _{A}\text{.}%
\]
Note that $U$ coincides with the ideal $U_{3}$ computed in Example
\ref{example:the global algorithm}.
\end{example}

\begin{remark}
In Example \ref{example:localContribution}, the normalization of the local
ring $A_{P_{2}}$ is $$\overline{A_{P_{2}}}= \frac{1}{x^{3}}\langle x^{3},
x^{2}(y-1), (y-1)^{2}\rangle_{A_{P_{2}}}.$$ Indeed, since $y^{2}$ is a unit in
$\overline{A_{P_{2}}}$, this follows by localizing $U_{2}$ at $P_{2}$. Note,
however, that $(y-1)/x$ and $(y-1)^{2}/x^{3}$ are not integral over $A$.
Hence, $\frac{1}{x^{3}}\langle x^{3}, x^{2}(y-1), (y-1)^{2}\rangle_{A}$ is not
a local contribution to $A$ at $P_{2}$.

\end{remark}

Relying on the Jacobian criterion, we may find the primes in $\Sing(A)$ by
means of primary decomposition. If there is precisely one such prime, this
requires (possibly expensive) computations which are only needed to detect
this fact. In the case of a plane curve $C$ considered here, supposing that
one singularity $P$ of $C$ is already known to us, we may check whether $P$ is
the only singularity of $C$ by comparing the local Tjurina number of $C$ at
$P$ with the total Tjurina number of $C$. Computing the total Tjurina number
via Gr\"obner bases over the rational numbers, however, can be expensive due to
coefficient swell. To overcome this problem, we provide an efficient modular
criterion. Note that though singularities at infinity do not matter for
obtaining integral bases, the criterion takes these singularities into
account. That is, it is formulated in the projective setting.

Let $L$ be any field, let $F \in L[X,Y,Z]$ be a square-free homogeneous
polynomial of positive degree, and let $\Gamma= \Proj(L[X,Y,Z]/\langle F
\rangle)$ be the projective curve defined by $F$. Moreover, write
\[
S=L[X,Y,Z]/\!\left\langle
F_{X}, F_{Y},F_{Z}\right\rangle ,
\]
where $F_{X}, F_{Y}, F_{Z}$ are the partial derivatives of $F$. Then, taking
Euler's rule into account, $\Gamma_{\sing}=\Proj(S)\subset\Gamma$ is the
singular locus of $\Gamma$. For any $Q\in\Gamma_{\sing}$, let $S_{(Q)}$ be the
homogeneous localization of $S$ at $Q$. Then
\[
\tau_{Q}(\Gamma)=\dim_{L} S_{(Q)}%
\]
is the Tjurina number of $\Gamma$ at $Q$. For example, if $P=\langle X,Y
\rangle$, then
\[
\tau_{P}(\Gamma)=\dim_{L}\left(L[X,Y]_{\langle X,Y \rangle}/\!\left\langle
f,\text{ } f_{X}, f_{Y}\right\rangle \right)  ,
\]
with $f=F(X,Y,1)$. The Tjurina number of $\Gamma$ in the chart $X\neq0$ is
\[
\tau_{X\neq0}(\Gamma)=\dim_{L}\left(L[X,Y]/\left\langle f,\text{ }
f_{X},f_{Y}\right\rangle \right)  ,
\]
and similarly for the other coordinate charts. Finally,
\[
\tau(\Gamma)=\deg\operatorname*{Proj}(S) =\sum_{Q\in\Gamma_{\sing}}\tau_{Q}
\]
is the total Tjurina number of $\Gamma$.

\begin{proposition}
\label{prop:one-singular-point}
Let $F \in{\mathbb{Q}}[X,Y,Z]$ be a square-free homogeneous polynomial of
positive degree with integer coefficients. Let $q$ be a prime number such that
the reduction $F_{q}$ of $F$ modulo $q$ is non-zero. Consider the curves
$\Gamma= \Proj({\mathbb{Q}}[X,Y,Z]/ \langle F \rangle)$ and $\Gamma_{q}=
\Proj({\mathbb{F}}_{q}[X,Y,Z]/\langle F_{q} \rangle)$, and let $P=\langle X,Y
\rangle$. Suppose that
\[%
\begin{tabular}
[c]{lllll}%
$\tau_{P}(\Gamma_{q})=\tau_{P}(\Gamma)>0$ & and & $\tau_{X\neq0}(\Gamma
_{q})=\tau_{Y\neq0}(\Gamma_{q})=0$\text{.} &  &
\end{tabular}
\]
Then $\Gamma_{\sing} =\{P\}$.
\end{proposition}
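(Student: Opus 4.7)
The plan is to sandwich $\tau(\Gamma)$ between two equal quantities by combining the upper semicontinuity of the total Tjurina number under reduction modulo $q$ with the chart hypotheses, which will pin down where the singularities of $\Gamma_q$ can live.

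First, I would unpack the chart hypotheses. The vanishing $\tau_{X \neq 0}(\Gamma_q) = \tau_{Y \neq 0}(\Gamma_q) = 0$ means that the singular scheme of $\Gamma_q$ has empty intersection with the affine charts $\{X \neq 0\}$ and $\{Y \neq 0\}$, whose union is $\PP^2 \setminus \{P\}$. Hence $\Gamma_{q,\sing} \subseteq \{P\}$, and so $\tau(\Gamma_q) = \tau_P(\Gamma_q) = \tau_P(\Gamma)$ by the remaining hypothesis. Note that the hypotheses also force $F_q$ to be squarefree: a repeated factor would contribute a positive--dimensional component to $V(F_{q,X}, F_{q,Y}, F_{q,Z})$ and thus an infinite Tjurina number in at least one of the charts considered.

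Next, I would establish the semicontinuity inequality $\tau(\Gamma) \leq \tau(\Gamma_q)$. The clean way is to clear denominators so that $F \in \Z[X,Y,Z]$, consider the finitely generated graded $\Z$-module
\[
M = \Z[X,Y,Z]/\langle F, F_X, F_Y, F_Z \rangle,
\]
and compare its Hilbert functions over $\Q$ and $\F_q$ in a degree $d$ beyond the Castelnuovo--Mumford regularity of both fibers. In such a degree one has $\dim_{\Q}(M_d \otimes \Q) = \tau(\Gamma)$ and $\dim_{\F_q}(M_d \otimes \F_q) = \tau(\Gamma_q)$, and the general inequality $\operatorname{rank}_{\Z} M_d \leq \dim_{\F_q}(M_d \otimes \F_q)$ valid for any finitely generated $\Z$-module yields the comparison.

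Combining everything, $\tau_P(\Gamma) \leq \tau(\Gamma) \leq \tau(\Gamma_q) = \tau_P(\Gamma)$, so all these are equalities and $\tau(\Gamma) = \tau_P(\Gamma)$. Since every singular point of $\Gamma$ contributes a strictly positive integer to the total Tjurina number, this forces $\Gamma_{\sing} = \{P\}$. The main obstacle is the semicontinuity step: one must ensure that a single degree $d$ works simultaneously for $\Q$ and $\F_q$, which reduces to bounding the Castelnuovo--Mumford regularity of $M$ uniformly, e.g.\ by a function of $\deg F$ alone.
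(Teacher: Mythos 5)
Your argument is correct, and its overall shape is the same sandwich as in the paper: the chart hypotheses force $(\Gamma_q)_{\sing}\subseteq\{P\}$, hence $\tau(\Gamma_q)=\tau_P(\Gamma_q)=\tau_P(\Gamma)$, and combining this with the semicontinuity inequality $\tau(\Gamma)\leq\tau(\Gamma_q)$ and with $\tau_P(\Gamma)\leq\tau(\Gamma)$ pins down $\tau(\Gamma)=\tau_P(\Gamma)$, whence $\Gamma_{\sing}=\{P\}$ (using $\tau_P(\Gamma)>0$ to get $P\in\Gamma_{\sing}$). Where you genuinely diverge is the justification of $\tau(\Gamma)\leq\tau(\Gamma_q)$: the paper simply cites a Hilbert-function comparison theorem (Theorem 5.3 of the reference \cite{arnold}) for $S=\mathbb{Q}[X,Y,Z]/\langle F_X,F_Y,F_Z\rangle$ versus its mod-$q$ reduction and then compares leading coefficients of the Hilbert polynomials, whereas you re-derive the needed inequality from scratch via the integral model $M=\mathbb{Z}[X,Y,Z]/\langle F,F_X,F_Y,F_Z\rangle$ and the elementary bound $\operatorname{rank}_{\mathbb{Z}}M_d\leq\dim_{\mathbb{F}_q}(M_d\otimes\mathbb{F}_q)$ in a single sufficiently large degree $d$. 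Your version buys self-containedness, and two small robustness gains: including $F$ in the ideal avoids any reliance on Euler's rule in characteristic $q$ (relevant when $q\mid\deg F$), and your observation that the chart hypotheses force $F_q$ to be squarefree makes explicit why $\tau(\Gamma_q)$ is finite, a point the paper leaves implicit. The paper's version is shorter because the semicontinuity is outsourced. One remark: the "main obstacle" you flag is not one — since a single prime $q$ is fixed, you only need $d$ at least the maximum of the two (finite) stabilization degrees of the Hilbert functions of $M\otimes\mathbb{Q}$ and $M\otimes\mathbb{F}_q$; no uniform bound on the Castelnuovo--Mumford regularity in terms of $\deg F$ is required.
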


\begin{proof}
By \citet[Theorem 5.3]{arnold}, considering the Hilbert functions of
\begin{align*}
S  &  =\mathbb{Q}[X,Y,Z]/\left\langle F_{X}, F_{Y},F_{Z} \right\rangle
\;\text{and}\\
S_{q}  &  =\mathbb{F}_{q}[X,Y,Z]/\left\langle (F_{X})_{q}, (F_{Y})_{q},
(F_{Z})_{q} \right\rangle ,
\end{align*}
we have
\[
\operatorname*{HF}\nolimits_{S}(t)\leq\operatorname*{HF}\nolimits_{S_{q}}(t),
\text{ for all } t.
\]
Since the Tjurina numbers are the leading coefficients of the respective
Hilbert polynomials, this implies that
\[
\tau(\Gamma)\leq\tau(\Gamma_{q})\text{.}%
\]

On the other hand, if $\tau_{X\neq0}(\Gamma_{q})=\tau_{Y\neq0}(\Gamma_{q})=0$,
then $(\Gamma_{q})_{\sing} =\{P\}$, so that%
\[
\tau(\Gamma_{q})=\tau_{P}(\Gamma_{q})=\tau_{P}(\Gamma)\leq\tau(\Gamma)\text{.}%
\]
Combining both inequalities yields%
\[
\tau_{P}(\Gamma)=\tau(\Gamma)\text{}%
\]
and, thus, $\Gamma_{\sing} =\{P\}$.
\end{proof}

\begin{remark}
The invariants in the criterion can be obtained efficiently by a standard
basis computation over $\mathbb{Q}$ with respect to a local monomial ordering and by
standard basis computations over $\mathbb{F}_{p}$ with respect to a global and
a local monomial ordering, respectively.

\end{remark}

\section{Puiseux Series, Valuations,  and Integrality}
\label{sect:basic-puiseux}

In this section, we discuss some basic facts about Puiseux series and their connection to integrality.
As in the introduction, $K$ will denote a field of characteristic zero. Moreover, $K\subset L$ will
be a field extension, with $L$ algebraically closed.

\subsection{Puiseux Series}

The \emph{field of Puiseux series} over $L$ is the field
\[
L\{\{X\}\}=\bigcup_{k=1}^{\infty}L((X^{1/k})).
\]
The Newton-Puiseux theorem, which is closely related to the aforementioned
finiteness theorem of Emmy Noether, says that $L\{\{X\}\}$ is the algebraic
closure of $L((X))$. In particular, $L[[X^{1/k}]]$ is the integral closure of
$L[[X]]$ in $L((X^{1/k}))$. See \citet[Chapter 13]{Eis}, \citet[Lecture
12]{Abhyankar}.

We have a canonical {\emph{valuation map}}
\[
\upsilon:L\{\{X\}\}\setminus\{0\}\rightarrow{\mathbb{Q}},\;\gamma\mapsto \upsilon(\gamma),
\]
where $\upsilon(\gamma)= \operatorname{ord}_{X}(\gamma)$ is the smallest exponent appearing
in a term of $\gamma$. By convention, $\upsilon(0)=\infty$. The corresponding \emph{valuation ring}
$L\{\{X\}\}_{\upsilon\geq0}=\bigcup_{k=1}^{\infty}L[[X^{1/k}]]$ consists of all Puiseux series
with non-negative exponents only. Henceforth it will be denoted by ${\mathcal{P}_{X}}$.

If $q\in L\{\{X\}\}[Y]$ is any polynomial in $Y$ with coefficients in
$L\{\{X\}\}$, the {\emph{valuation}} of $q$ at $\gamma\in L\{\{X\}\}$ is
defined to be $\upsilon_{\gamma}(q)=\upsilon(q(X,\gamma))$.

\subsection{Conjugate Puiseux Series}

Two Puiseux series in $L\{\{X\}\}$ are called \emph{conjugate} if they are
conjugate as field elements over $K((X))$.

\subsection{Rational Part}

Let $\gamma= a_{1}X^{t_{1}}+\ldots+a_{k}X^{t_{k}}+
a_{k+1}X^{t_{k+1}}+ \ldots\in{\mathcal{P}_{X}}$, with $0 \le t_{1}< \ldots < t_{k}<
\ldots$. Let $k \geq0$ be such that $a_{i} X^{t_{i}}\in K[X]$ for $1 \leq i
\leq k$ and $a_{k+1} X^{t_{k+1}} \not \in K[X]$. Then we call $a_{1}X^{t_{1}%
}+\ldots+a_{k}X^{t_{k}}$ the \emph{rational part} of $\gamma$, and
$a_{k+1}X^{t_{k+1}}$ its \emph{first non-rational term}.

\subsection{Notation}
\label{subsect: general-notation}

In what follows, $g\in K[[X]][Y]$ will be a square-free monic polynomial of degree $m\geq 1$
in $Y$. In subsequent sections, we will consider the defining polynomial $f\in K[X,Y]$ of
an affine plane curve as in the introduction and its factorization
$$
f=f_{0}\widetilde{f}=f_{0} f_{1}\cdots f_{r}
$$
given by the Weierstrass preparation theorem as in  Equation \eqref{equation:branches} of the introduction.
The polynomial $g$ will then be either one of the branches $f_1, \dots , f_r$ or their product
$\widetilde{f}$ or $f$ itself.

\subsection{Puiseux Expansions}
\label{subsect: Puiseux-expansions}

By the Newton-Puiseux theorem, the polynomial $g \in K[[X]][Y]$ has $m$ roots $\gamma_{1}, \dots,
\gamma_{m} \in L\{\{X\}\}$:
\[
g=(Y-\gamma_{1})\cdots(Y-\gamma_{m}).
\]
The  $\gamma_{i}$ are called the \emph{Puiseux expansions} of $g$. The monic
assumption guarantees that these expansions are integral over $K[[X]]$. In particular,
they are all contained in some $L[[X^{1/k}]]\subset {\mathcal{P}_{X}}$, so that  their terms
have non-negative exponents only. Furthermore, each $\gamma_{i}$ is
algebraic over $K((X))$, and its minimal polynomial over $K((X))$ has all
its coefficients in $K[[X]]$ (see, for example, \citet[Theorem 2.1.17]{SH}). We
conclude that the $\gamma_{i}$ can be grouped into conjugacy classes which
correspond to the irreducible factors of $g$ in $K[[X]][Y]$. If $g$ is
\emph{absolutely irreducible}, that is, $g$ is irreducible in $L[[X]][Y]$,
then $m$ is the least positive integer such that all  $\gamma_i$ are contained in
$L[[X^{1/m}]]$. 
In fact, in this case, we have a representation of type
$$
g(T^m,Y)=\prod (Y-\zeta (\omega^\ell T)),
$$
where $\zeta\in L[[T]]$, and $\omega\in L$ is a primitive $m$th root of unity.
That is, the Puiseux expansions of $g$ are of type $\gamma_\ell(X)=\zeta(\omega^\ell X^{1/m})$.
See \citet[Lecture 12]{Abhyankar}, \citet[Section 5.1]{JP}.

\subsection{Regularity Index and Singular Part}

\noindent If $\gamma= a_{1}X^{t_{1}}+a_{2}X^{t_{2}}+\dots$ is a Puiseux
expansion of $g$, with $0 \le t_{1}<t_{2}<\dots$ and no $a_{i}$ zero, we
define the \emph{regularity index} of $\gamma$ (with respect to $g$) to be the
least exponent $t_{k}$ such that no other Puiseux expansion of $g$ has the
same initial part $a_{1}X^{t_{1}}+\dots+a_{k}X^{t_{k}}$. This initial part is
called the \emph{singular part} of $\gamma$ (with respect to $g$).

\subsection{The Newton-Puiseux Algorithm}

The Puiseux expansions of $g$ can be computed recursively up to any given
$X$-degree using the Newton-Puiseux algorithm (see, for example, \citet{JP}).
Essentially, to get a solution $a_{1}X^{t_{1}}+a_{2}X^{t_{2}}+\dots$ of
$g(X,\gamma(X))=0$, with $t_{1}<t_{2}<\dots$, the algorithm proceeds as
follows: Starting from $g^{(0)}=g$ and $K^{(0)}=K((X))$, we commence the $i$th
step of the algorithm by looking at a polynomial $g^{(i-1)}\in K^{(i-1)}[Y]$.
We then choose one face $\Delta$ of the Newton polygon of $g^{(i-1)}$ such
that all the other points of the polygon lie on or above the line containing
the face. Let $g_{\Delta}^{(i-1)}$ be the sum of terms of $g^{(i-1)}$
involving the monomials of $g^{(i-1)}$ on $\Delta$. That is, if $-\frac{w_{1}%
}{w_{2}}$ is the slope of $\Delta$, then $g_{\Delta}^{(i-1)}$ is the sum of
terms of $g^{(i-1)}$ of lowest $(1,\frac{w_{2}}{w_{1}})$-weighted degree. We
write $d_{i}$ for this degree. Choose an irreducible factor of $g_{\Delta
}^{(i-1)}$ over $K^{(i-1)}$ and a root $q_{i}$ of that factor. Note that
$q_{i}$ is of type $q_{i}=c_{i}X^{\frac{w_{2}}{w_{1}}}$, where $c_{i}$ is a
root of the polynomial $g_{\Delta}^{(i-1)}(1,Y)$. Now, let $K^{(i)}%
=K^{(i-1)}(q_{i})$ and set $g^{(i)}=\frac{1}{X^{d_{i}}}g^{(i-1)}(X,q_{i}%
\cdot(1+Y))$. Then the $i$th term of the expansion to be constructed is
$a_{i}X^{t_{i}}=q_{1}\cdots q_{i}$. It is clear from this construction that
different conjugacy classes of expansions arise from different choices for the
faces and irreducible factors of $g_{\Delta}^{(i-1)}$ over $K^{(i-1)}$, respectively.

\begin{example}
\label{examplePuiseux} The eight Puiseux expansions of the polynomial
\begin{align*}
g  &  =Y^{8}+(-4X^{3}+4X^{5})Y^{7}+(4X^{3}-4X^{5}-10X^{6})Y^{6}\\
& +(4X^{5} -6X^{6})Y^{5} +(6X^{6}-8X^{8})Y^{4}+(8X^{8}-4X^{9})Y^{3}\\
& +(4X^{9}+4X^{10})Y^{2}+4X^{11}Y+X^{12}\in{\mathbb{Q}}[X,Y]
\end{align*}
are conjugate over ${\mathbb{Q}}((X))$; their singular parts are of type
\[
q_{1}+q_{1}q_{2}+q_{1}q_{2}q_{3},
\]
where the $q_{i}$ satisfy
\[%
q_{1}^{2}+X^{3}=0, \ \ \ q_{2}^{2}+\frac{1}{2X}q_{1}=0, \  \ \ \text{and } \ \ \ q_{3}^{2}-\frac{1}{8X}q_{1}=0.
\]
To see this, note that the Newton polygon of $g^{\left(  0\right)  }=g$ has
only one face $\Delta_{0}$, leading to $g_{\Delta_{0}}^{\left(  0\right)
}=\left(  X^{3}+Y^{2}\right)  ^{4}$ and the extension%
\[
K_{0}=\mathbb{Q}((X))\subset K_{1}=K_{0}[iX^{\frac{3}{2}}].
\]
In the next step, $g^{\left(  1\right)  }$ has only one face $\Delta_{1}$,
yielding
\[
g_{\Delta_{1}}^{\left(  1\right)  }=4\left(  2Y^{2}+\frac{q_{1}}{X}\right)
^{2}%
\]
and%
\[
K_{1}\subset K_{2}=K_{0}[iX^{\frac{3}{2}},(1-i)X^{\frac{1}{4}}].
\]
Finally, also $g^{\left(  2\right)  }$ has only one face $\Delta_{2}$, which
corresponds to
\[
g_{\Delta_{2}}^{\left(  2\right)  }=-2\cdot\left(  8Y^{2}-\frac{q_{1}}%
{X}\right)
\]
and the extension%
\[
K_{2}\subset K_{3}=K_{0}[iX^{\frac{3}{2}},(1-i)X^{\frac{1}{4}},(1+i)X^{\frac
{1}{4}}]=K_{0}[i,X^{\frac{1}{4}}].
\]

\end{example}

\subsection{Maximal Integrality Exponents}
\label{sect:max-expo}

Let $\Gamma=\{\gamma_{1},\dots,\gamma_{m}\}$ be the set of Puiseux
expansions of $g$. The {\emph{valuation}} of a polynomial $q\in L\{\{X\}\}[Y]$
at $g$ is defined to be $\upsilon_{g}(q)=\min_{1 \leq i \leq m} \upsilon_{\gamma_{i}}(q)$.
Note that if $q$ is monic of degree $d\geq 1$ in $Y$, 
and
\[
q=(Y-\eta_{1}(X))\cdots(Y-\eta_{d}(X))
\]
is the factorization of $q$ in $L\{\{X\}\}[Y]$, then
\[
\upsilon_{g}(q)=\min_{1 \leq i \leq m}\sum_{j=1}^{d}\upsilon(\gamma_{i}-\eta_{j})\text{.}%
\]

\begin{lemma}
\label{lemma:intA} Let $g\in K[[X]][Y]$ be a square-free monic polynomial of degree
$m\geq 1$ in $Y$, with Puiseux expansions $\gamma_{1}, \dots, \gamma_{m}$. Fix
an integer $d$ with $1\leq d \leq m-1$. If $\mathcal{A}\subset\{1,\dots,m\}$ is
a subset of cardinality $d$, set
\[
\Int({\mathcal{A}})=\min_{i\not \in \mathcal{A}}\left(  \sum_{j\in\mathcal{A}%
}\upsilon(\gamma_{i}-\gamma_{j})\right).
\]
Choose a subset $\widetilde{\mathcal{A}}\subset\{1,\dots,m\}$ of cardinality
$d$ such that $\Int({\widetilde{\mathcal{A}}})$ is maximal among all
$\Int({\mathcal{A}})$ as above, and set $\widetilde{p}_d=\prod_{j\in
\widetilde{\mathcal{A}}}(Y-\gamma_{j})\in{\mathcal{P}_{X}}[Y]$. Then
$\upsilon_{g}(\widetilde{p}_d)=\Int({\widetilde{\mathcal{A}}})$, and this number is the
maximal valuation $\upsilon_{g}(q)$, for
$q\in L\{\{X\}\}[Y]$ monic of degree $d$ in $Y$.

\end{lemma}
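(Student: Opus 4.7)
The equality $v_g(\widetilde p)=\Int(\widetilde{\mathcal A})$ is a direct unfolding of definitions. The Puiseux roots of $\widetilde p$ are exactly the $\gamma_j$ with $j\in\widetilde{\mathcal A}$, so $v_{\gamma_i}(\widetilde p)=\sum_{j\in\widetilde{\mathcal A}}v(\gamma_i-\gamma_j)$ for every $i$. For $i\in\widetilde{\mathcal A}$ this sum contains the diagonal term $v(0)=\infty$, so the minimum over $i\in\{1,\dots,m\}$ coincides with the minimum over $i\notin\widetilde{\mathcal A}$, and this is precisely $\Int(\widetilde{\mathcal A})$.

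For the maximality assertion, I take an arbitrary monic $q\in L\{\{X\}\}[Y]$ of degree $d$ in $Y$ with Puiseux factorization $q=\prod_{k=1}^{d}(Y-\eta_k)$. The plan is to construct a subset $\mathcal A'\subset\{1,\dots,m\}$ of cardinality $d$ with $v_g(q)\le\Int(\mathcal A')$; the maximality of $\widetilde{\mathcal A}$ then yields $v_g(q)\le\Int(\widetilde{\mathcal A})$. The guiding principle is to round each root $\eta_k$ to a nearest Puiseux expansion of $g$. Concretely, for each $k$ I pick $j_k\in\{1,\dots,m\}$ with $v(\eta_k-\gamma_{j_k})=\max_j v(\eta_k-\gamma_j)$. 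A short case analysis using the ultrametric inequality -- splitting on whether $v(\gamma_i-\eta_k)$ is strictly less than, equal to, or strictly greater than $v(\eta_k-\gamma_{j_k})$, the last case being excluded by the maximality of $j_k$ -- shows that
\[
v(\gamma_i-\eta_k)\le v(\gamma_i-\gamma_{j_k})\qquad\text{for every }i\text{ and every }k.
\]
Summing over $k$ and taking the minimum over $i$ gives $v_g(q)\le\min_i\sum_{k=1}^{d}v(\gamma_i-\gamma_{j_k})$.

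The step I expect to require the most care is overcoming the possible non-injectivity of $k\mapsto j_k$. If it is injective, then $\mathcal A'=\{j_1,\dots,j_d\}$ has cardinality $d$, the right-hand side above equals $\Int(\mathcal A')$, and the argument is complete. Otherwise, I plan an exchange argument on the ultrametric tree of Puiseux expansions of $g$: whenever some index $j^{*}$ occurs with multiplicity $r>1$, I reassign the duplicates to the $r-1$ indices $j'\ne j^{*}$ that maximise $v(\gamma_{j^{*}}-\gamma_{j'})$. For any $i$ outside the resulting cluster, $v(\gamma_{j^{*}}-\gamma_i)<v(\gamma_{j^{*}}-\gamma_{j'})$, and ultrametric equality applied to the triple $(\gamma_i,\gamma_{j^{*}},\gamma_{j'})$ yields $v(\gamma_i-\gamma_{j'})=v(\gamma_i-\gamma_{j^{*}})$, so the sum over $k$ is preserved under the substitution; for $i$ inside the cluster, both sides pick up an $\infty$. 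Iterating over all repeated indices produces an injection $\pi\colon\{1,\dots,d\}\hookrightarrow\{1,\dots,m\}$ whose image $\mathcal A'$ satisfies the required bound. Verifying that the exchanges at different repeated indices can be scheduled without conflict -- presumably by processing them from the top of the valuation tree downward -- and checking that the bound survives for every $i\notin\mathcal A'$ is where I foresee the bulk of the technical work.
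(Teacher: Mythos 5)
Your first two steps are sound: the identity $v_g(\widetilde p)=\Int(\widetilde{\mathcal A})$ is indeed immediate, and the rounding bound is correct — choosing $j_k$ with $v(\eta_k-\gamma_{j_k})$ maximal does give $v(\gamma_i-\eta_k)\le v(\gamma_i-\gamma_{j_k})$ for all $i,k$, hence $v_g(q)\le\min_i\sum_k v(\gamma_i-\gamma_{j_k})$. (The paper itself settles the maximality claim by quoting the proof of van Hoeij's Theorem 5.1, which treats $d=m-1$; your argument is meant as a self-contained substitute.) The genuine gap is exactly the step you flag yourself: passing from the multiset $\{j_1,\dots,j_d\}$ to an honest $d$-element subset. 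As stated, your exchange rule does not work: the asserted strict inequality $v(\gamma_{j^*}-\gamma_i)<v(\gamma_{j^*}-\gamma_{j'})$ for all $i$ outside the cluster fails whenever there are ties for the maximal valuation, and, worse, the $r-1$ indices nearest to $\gamma_{j^*}$ may already occur among the $j_k$, so the reassignment can create new repetitions; with two expansions that are each other's nearest neighbours the naive iteration can even cycle ($\{1,1,2\}\to\{1,2,2\}\to\{1,1,2\}$). Nothing in the proposal shows that the process terminates in an injection or that the quantity you need is preserved along the way; ``processing from the top of the valuation tree downward'' is left unverified, and this is precisely the decisive step.

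The gap is repairable, and more cheaply than you anticipate, because you only need to dominate $v_g(q)=\min_i\sum_k v(\gamma_i-\eta_k)$, not every individual sum. For a size-$d$ multiset $M$ of indices put $\Phi(M)=\min_{i\notin\operatorname{supp}M}\ \sum_{j\in M}v(\gamma_i-\gamma_j)$; your rounding gives $\Phi(M_0)\ge v_g(q)$. If some $j^*$ occurs with multiplicity at least $2$, replace one copy of $j^*$ by an index $j'$ \emph{outside the current support} maximizing $v(\gamma_{j^*}-\gamma_{j'})$ (such a $j'$ exists because $|\operatorname{supp}M|\le d\le m-1$). For every $i$ outside the new support, $i$ was a candidate in that maximum, so $v(\gamma_i-\gamma_{j^*})\le v(\gamma_{j^*}-\gamma_{j'})$, and the ultrametric inequality then gives $v(\gamma_i-\gamma_{j'})\ge v(\gamma_i-\gamma_{j^*})$; hence $\Phi$ does not decrease, while the support strictly grows. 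After at most $d$ steps you obtain a genuine subset $\mathcal A'$ with $\Int(\mathcal A')\ge v_g(q)$, and the maximality of $\widetilde{\mathcal A}$ concludes. Without some such argument (or the citation the paper relies on), your proof is incomplete at its key combinatorial step.
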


\begin{proof}
That $\upsilon_{g}(\widetilde{p}_d)=\Int({\widetilde{\mathcal{A}}})$ is clear from the
definitions. That this number is the maximum valuation $\upsilon_{g}(q)$ as claimed
follows as in the proof of \citet[Theorem 5.1]{vanHoeij94}, where the case
$d=m-1$ is treated.
\end{proof}

In the situation of the lemma, we write
\[
o(\Gamma,d)=\upsilon_{g}(\widetilde{p}_d).
\]

In case $d=m-1$, we abbreviate
\[
\Int_{i}=\Int(\{1,\dots,i-1,i+1,\dots,n\})=\sum_{j\neq i}\upsilon(\gamma_{i}%
-\gamma_{j}).
\]

\begin{remark}\label{rem:max-int-coeff}
By construction, we have $$o(\Gamma,1)\leq\ldots\leq o(\Gamma,m-1).$$
\end{remark}

\begin{example}
\label{exampleOneBranch} Let $g=(Y^{2}+2X^{3})+Y^{3}\in{\mathbb{Q}}[X,Y]$. The
Puiseux expansions of $g$ are
\begin{align*}
\gamma_{1} &  =a_{1}X^{3/2}+X^{3}+\dots\;,\\
\gamma_{2} &  =a_{2}X^{3/2}+X^{3}+\dots\;,\\
\gamma_{3} &  =-1-2X^{3}+\dots\;,
\end{align*}
where $a_{1},a_{2}$ are the roots of $Z^{2}+2$. Then $\Int_{1}=3/2+0=3/2$,
$\Int_{2}=3/2+0=3/2$, and $\Int_{3}=0+0=0$, so that both $i=1$ and $i=2$
maximize the valuation. Taking $i=1$, we get $\widetilde{p}_2=(Y-\gamma
_{2})(Y-\gamma_{3})$ and $o(\Gamma,2)=3/2$.
\end{example}

\begin{example}
\label{exampleTwoBranches-int} Let $g=(Y^{3}+X^{2})(Y^{2}-X^{3})+Y^{6}%
\in{\mathbb{Q}}[X,Y]$. The Puiseux expansions of $g$ are

\begin{equation*}
\begin{aligned}[c]
\gamma_1 &= a_1 X^{2/3} +\dots, \\
\gamma_2 &= a_2 X^{2/3} +  \dots, \\
\gamma_3 &= a_3 X^{2/3} + \dots,
\end{aligned}
\quad\quad
\begin{aligned}
\gamma_4 &=  X^{3/2} -1/2x^{11/2} + \dots, \\
\gamma_5 &= -X^{3/2} + \dots,  \\
\gamma_6 &= 1 + \dots,
\end{aligned}
\end{equation*}

\noindent where the $a_{i}$ are the roots of $Z^{3}+1$. Then
$\Int_{1}=\Int_{2}=\Int_{3}=2/3+2/3+2/3+2/3+0=8/3$, $\Int_{4}=\Int_{5}%
=3/2+2/3+2/3+2/3+0=7/2$, and $\Int_{6}=0$. We conclude that
$o(\Gamma,5)=7/2$.
\end{example}

\begin{lemma}
\label{same exp} Let $g\in K[[X]][Y]$ be a square-free monic polynomial of degree
$m\geq 1$ in $Y$, let $1\leq d \leq m-1$, and let $R$ be one of the rings $K[X]$,
$K[X]_{\left\langle X\right\rangle }$, $K[[X]]$, $K((X))$, ${\mathcal{P}_{X}}$, or $L\{\{X\}\}$.
The maximal valuation $v_{g}(q)$, $q\in R[Y]$ monic of degree $d$ in $Y$, is
independent of the choice of $R$ from among this list.
\end{lemma}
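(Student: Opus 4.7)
The plan is to show that the maximum of $v_g(q)$ over monic $q \in R[Y]$ of degree $d$ equals $o(\varGamma, d)$ for every $R$ in the list. Since the rings nest as $K[X] \subset K[[X]] \subset K((X)) \subset \mathcal{P}_X \subset L\{\{X\}\}$, the maxima are monotone in $R$, and by Lemma \ref{lemma:intA} the largest one, over $L\{\{X\}\}[Y]$, equals $o(\varGamma, d)$ and is attained by $\widetilde{p} \in \mathcal{P}_X[Y]$. So the $\mathcal{P}_X$ and $L\{\{X\}\}$ cases already coincide, and it is enough to exhibit a monic $q \in K[X][Y]$ of degree $d$ with $v_g(q) \geq o(\varGamma, d)$; the rest of the chain of equalities follows.

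First, I would descend from $\mathcal{P}_X[Y]$ into $K[[X]][Y]$ by Galois averaging. Let $G$ be the Galois group, over $K((X))$, of a finite normal extension containing all roots $\gamma_1, \dots, \gamma_m$ of $g$. Then $G$ permutes the Puiseux expansions and preserves the canonical valuation $v$, so for any $p \in L\{\{X\}\}[Y]$ and $\sigma \in G$,
\[
v_g(\sigma p) \;=\; \min_k v\bigl(p(\sigma^{-1}\gamma_k)\bigr) \;=\; v_g(p).
\]
Setting $q_0 = \tfrac{1}{|G|} \sum_{\sigma \in G} \sigma(\widetilde{p})$ therefore gives a monic polynomial of degree $d$ (an average of monic polynomials of the same degree) whose coefficients are $G$-invariant and hence lie in $\mathcal{P}_X \cap K((X)) = K[[X]]$. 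The ultrametric inequality applied at each $\gamma_k$ yields $v_g(q_0) \geq \min_{\sigma} v_g(\sigma \widetilde{p}) = o(\varGamma, d)$.

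Next, I would truncate to enter $K[X][Y]$. Writing $q_0 = Y^d + \sum_{i<d} c_i(X) Y^i$ with $c_i \in K[[X]]$, I would fix an integer $M > o(\varGamma, d)$ and replace each $c_i$ by its $X$-truncation $\widetilde{c}_i \in K[X]$ at order $M$, yielding $\widetilde{q} \in K[X][Y]$ still monic of degree $d$. Because every Puiseux expansion of $g$ satisfies $v(\gamma_k) \geq 0$, the error satisfies $v\bigl((q_0 - \widetilde{q})(\gamma_k)\bigr) > M > o(\varGamma, d)$, and the ultrametric inequality then gives $v\bigl(\widetilde{q}(\gamma_k)\bigr) \geq o(\varGamma, d)$ for all $k$. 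Hence $v_g(\widetilde{q}) \geq o(\varGamma, d)$, closing the chain.

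The delicate step is the Galois-averaging one: the main thing to verify cleanly is that the averaged coefficients really fall into $K[[X]]$ rather than just $K((X))$, which uses the identification $\mathcal{P}_X \cap K((X)) = K[[X]]$ together with the fact that $G$ leaves $\mathcal{P}_X$ stable (each $\sigma(\widetilde{p})$ has coefficients in $\mathcal{P}_X$ because $G$ permutes the $\gamma_k$, all of which have non-negative valuation). The division by $|G|$ is harmless in characteristic zero. Direct truncation of $\widetilde{p}$ without first averaging would fail, since $\widetilde{p}$ may have coefficients with irrational exponents or non-$K$-rational coefficients that cannot be approximated to high order by elements of $K[X]$.
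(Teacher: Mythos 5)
Your proof is correct and follows essentially the same route as the paper: starting from the optimal $\widetilde{p}\in\mathcal{P}_X[Y]$ supplied by Lemma \ref{lemma:intA}, you descend to $K[X][Y]$ by symmetrizing over the Galois action and then truncating (your averaging is, up to the harmless factor $1/|G|$, the paper's trace map of $\widetilde{K}(X^{1/m})$ over $K(X)$; the paper merely performs the truncation on the Puiseux roots before tracing rather than on the coefficients afterwards). One cosmetic slip: the listed rings do not literally form a chain, since $K((X))\not\subset\mathcal{P}_X$, but your argument only uses $K[X]\subset R\subset L\{\{X\}\}$ for each $R$ in the list, so nothing is affected.
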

\begin{proof}
For any ring $R$ as in the assertion, we have natural inclusions $K[X]\subset R \subset L\{\{X\}\}$.
Hence, the value $\upsilon_{g}(q)$  is defined for any polynomial $q\in R[Y]$ and it suffices to show that
there is a polynomial $p_d\in K[X][Y]$ such that $\upsilon_{g}(p_d)$ maximizes the valuation over $L\{\{X\}\}$
in degree $d$. For this, we recall from Lemma \ref{lemma:intA} that there is a polynomial
$\widetilde{p}_d=\prod_{j\in\widetilde{\mathcal{A}}}(Y-\gamma_{j}) \in \mathcal{P}_{X}[Y]$
which maximizes the valuation over $L\{\{X\}\}$ in degree $d$. We may choose an
integer $k$ such that $\widetilde{p}_d\in L[[X^{1/k}]][Y]$. By truncating each
$\gamma_{j}$ to degree $\upsilon_{g}(\widetilde{p}_d)$, we get a polynomial $\overline
{p}_d = \prod_{j\in\widetilde{\mathcal{A}}}(Y-\overline{\gamma}_{j}) \in L[X^{1/k}][Y]$ with $\upsilon_{g}(\overline{p}_d)
= \upsilon_{g}(\widetilde{p}_d)$. Since $\overline{p}_d$ is monic in $Y$, by applying the trace map for
$L(X^{1/k})$ over $L(X)$ to $\overline{p}_d$ and dividing by the integer leading coefficient of the resulting
polynomial, we get a monic polynomial $p_d^\prime\in L[X][Y]$ of degree $d$ in $Y$ with $\upsilon_{g}({p_{d}^\prime})
\geq  \upsilon_{g}(\widetilde{p}_d)$ (note that the trace map sends $X^{1/k}$ to zero).
Next, considering $p_d^\prime$ as a polynomial in $X, Y$ with coefficients in $L$ and
adjoining these coefficients to $K$, we get a finite field extension $K \subset K^\prime$ such that
$p_d^\prime \in K^\prime[X][Y]$. Applying the trace map of this extension to $p_d^\prime$ and
dividing by the integer leading coefficient of the resulting polynomial, we get a monic polynomial
$p_d\in K[X][Y]$ of degree $d$ in $Y$ with $\upsilon_{g}({p_d}) \geq  \upsilon_{g}(\widetilde{p}_d)$.
In fact, by Lemma \ref{lemma:intA} and the choice of $\widetilde{p}_d$, equality holds since
$\widetilde{p}_d$ maximizes the valuation over $L\{\{X\}\}$.
\end{proof}

\begin{example}
\label{exampleTwoBranches-int-2} In Example \ref{exampleTwoBranches-int}, choosing
$\widetilde{p}_2=(Y-\gamma_{2})(Y-\gamma_{3})$, we get $\overline {p}_2 = (Y-a_{2}X^{3/2})(Y+1)$
 and, thus,  $p_2=p_2^\prime=Y(Y+1)$.
\end{example}

The reason for considering the valuations $v_{g}(q)$ is that they are directly
related to integrality: If $q\in  K[[X]][Y]$ is monic of degree $0\leq d\leq m-1$ in $Y$,
then $\lfloor v_g(q) \rfloor$ is the maximum integer $e$ such that $q(x,y) / x^e$ is integral
over
\[
B = K[[x]][y] =K[[X]][Y]/\!\left\langle g\right\rangle.
\]
For the lack of reference in this generality,  we show this in Theorem \ref{theorem:integrality local}
below (see also \citet[Chapter 5, \S 1]{JP} and  \citet[Chapter 5, \S 10]{walker}). We refer to \citet[Chapter 6]{SH}
for a general introduction to valuations and their connection to integrality.

We will use the following result to reduce problems
concerning a possibly reducible polynomial $g\in K[[X]][Y]$ to the irreducible case:

\begin{proposition}
[Splitting of Normalization]\label{prop split} Let $g\in K[[X]][Y]$ be a square-free
monic polynomial  of degree $\geq 1$ in $Y$, and let $g = g_{1}\cdots g_{s}$ be its
decomposition into irreducible factors $g_i\in K[[X]][Y]$.  For each $i$, $1 \le i \le s$,
set $h_{i}=\prod_{j=1\text{, }j\neq i}^{s}g_{j}$. Then the $g_{i}$ and $h_{i}$
are coprime in $K((X))[Y]$, so that there are polynomials $a_{i},b_{i}\in K[[X]][Y]$
and integers $c_{i}\in\mathbb{N}$ fitting into B\'ezout identities of type
\[
a_{i}g_{i}+b_{i}h_{i}=X^{c_{i}}, \;\text{ for }\; i=1, \dots, s\text{.}%
\]
Furthermore, the normalization of \ \negthinspace\negthinspace
\ $K[[X]][Y]/\!\left\langle g_{1}\cdots g_{s}\right\rangle$ splits as%
\begin{equation}
\label{equation:splitting}
\overline{K[[X]][Y]/\!\left\langle g_{1}\cdots g_{s}\right\rangle }%
\cong\bigoplus_{i=1}^{s}\overline{K[[X]][Y]/\!\left\langle g_{i}\right\rangle
},
\end{equation}
where the splitting is given by%
\[
(t_{1}\!\!\!\mod g_{1},\dots,t_{s}\!\!\!\mod g_{s})\mapsto\sum_{i=1}^{r}%
\frac{b_{i}h_{i}t_{i}}{X^{c_{i}}}\!\!\!\mod g_{1}\cdots g_{s}\text{.}%
\]
\end{proposition}

\begin{proof}
Clear by the Chinese remainder theorem and its proof. See \citet[Theorem
1.5.20]{JP}.
\end{proof}

We are now ready to clarify the relation between  valuations and integrality.
To prove the theorem which is relevant to us here, we need the result below:
\begin{proposition}
\label{rem:val-int-gen}
Let $R$ be an integral domain with quotient field $\TQR(R)$. Then
$\phi \in \TQR(R)$ is integral over $R$ iff $\upsilon(\phi) \ge 0$ for every valuation
$\upsilon$ on $\TQR(R)$ whose valuation ring contains $R$.
If $R$ is Noetherian, it is enough to consider discrete valuations.
\end{proposition}
\begin{proof}
 See, for example, \citet[Proposition 6.8.14]{SH}.
\end{proof}

\begin{theorem}
\label{theorem:integrality local}
Let $g \in K[[X]][Y]$ be a square-free monic polynomial of degree
$m\geq 1$ in $Y$. Let $q\in  K[[X]][Y]$ be monic of degree
$0\leq d\leq m-1$ in $Y$, and let $e$ be the maximal integer such that $\frac{q(x,y)}{x^{e}}$
is integral over $K[[x]][y]=K[[X]][Y]/\langle g\rangle$. Then
\[
e=\lfloor \upsilon_{g}(q)\rfloor\text{.}%
\]
\end{theorem}

\begin{proof}
Let  $\Gamma$ be the set of Puiseux expansions of $g$. Then
$\upsilon_{g}(q) = \min_{{\gamma}\in \Gamma}\upsilon_{\gamma}(q)$
by the very definition of $\upsilon_{g}(q)$ in Section \ref{sect:max-expo}. Hence, the assertion
of the theorem is equivalent to the following statement:
\begin{equation}
\begin{aligned}
\label{equation:equiv-statem}
\text{If $\widetilde{e}$ is a non-negative integer, then ${q(x,y)}/{x^{\widetilde{e}}}$ is integral over $K[[x]][y]$} \\
\text{$\Longleftrightarrow \upsilon_{\gamma}(q) \ge \widetilde{e}$ for each $\gamma\in\Gamma$.}
\end{aligned}
\end{equation}
\noindent
To show this statement, we proceed in four steps:

\emph{Step 1}: We may assume that $K$ is algebraically closed. To see this, we consider a field extension $K\subset L$
with $L$ algebraically closed as before, and show that an element $w \in K((x))[y] = \TQR(K[[X]][y])
\subset \TQR(L[[X]][y]) = L((x))[y]$ is integral over $K[[x]][y]$ iff it is integral over $L[[x]][y]$.  One direction is immediate:
any integral equation for $w$ over $K[[x]][y]$ is also an integral equation for $w$ over $L[[x]][y]$. Conversely, if
$p(w) = 0$ is an integral equation for $w$ over $L[[x]][y]$, then adjoining the coefficients of $p$ to $K$ yields
a finite field extension of  $K$.  Applying the trace map of this extension to  $p$ and dividing by the integer
leading coefficient of the resulting polynomial, we get an integral equation for $w$ over $K[[x]][y]$.

\emph{Step 2}: We may assume that $g$ is irreducible. Indeed,  if $g=g_1 \cdots g_s$ is the  factorization of $g$
as in Proposition \ref{prop split}, each $g_{i}$, $1 \le i \le s$,  corresponds to a conjugacy class of the Puiseux expansions
of $g$ (see Section \ref{subsect: Puiseux-expansions}). Hence, by Proposition \ref{prop split}, statement
\eqref{equation:equiv-statem} holds for $g$ iff it holds for each $g_{i}$.

\emph{Step 3}: We may assume that $g$ is a Weierstrass polynomial. Indeed, since we already assume that
$g$ is irreducible, $g$ is either a Weierstrass polynomial or a unit in $K[[x, y]]$. In the latter case,
$g(0,0)\neq 0$, and it follows from  Hensel's lemma that $g(0,Y)=(Y-c)^m$ for some $c\in K$
(see Section \ref{sec Hensel}  below for Hensel's lemma). Hence, the translation $Y\rightarrow Y+c$ turns $g$
into a Weierstrass polynomial.

\emph{Step 4}:
From the Newton-Puiseux theorem, we know that under the assumptions above,
the  Puiseux expansions of $g$ are of type $\eta (\omega^\ell X^{1/m})$,
$l=1,\dots,m$, where $\eta\in T K[[T]]$, and $\omega$ is a primitive  $m$th
root of unity (see Section \ref{subsect: Puiseux-expansions}). In particular,
the value $\upsilon_{\gamma}(q)$ on  the right hand hand side of \eqref{equation:equiv-statem}
is independent of the choice of $\gamma\in\Gamma$. To show
that \eqref{equation:equiv-statem} holds, we may, hence, work with the
fixed expansion $\gamma(X)=\eta (X^{1/m})$.

For the implication from left to right in \eqref{equation:equiv-statem},  we then note that
$$
\upsilon_{\gamma} : K((x))[y] \rightarrow \Z \cup \{\infty\}, \ \phi \mapsto m\upsilon_{\gamma}(\phi),
$$
is a well-defined valuation on $K((x))[y]$ whose valuation ring contains $K[[x]][y]$.
Thus, Remark~\ref{rem:val-int-gen} implies that if $\upsilon_\gamma(q) < \widetilde{e}$,
then $q/x^{\widetilde{e}}$ is not integral.

For the converse implication,  we conclude from \citet[Theorem 5.1.3]{JP} that the map
$K[[x,y]] \rightarrow K[[T]]$ defined by $(x, y) \mapsto (T^m, \eta (T))$ allows us to regard $K[[T]]$ as the normalization of $K[[x,y]]$.
Hence, if $q(x,y) / x^{\widetilde{e}}$ is not integral, then the fraction $q(T^m, \eta (T)) / T^{m \widetilde{e}} \not\in K[[T]]$,
that is, the fraction has negative order in T. But
\[
\operatorname{ord}_{\;\! T}(q(T^m, \eta (T)) / T^{m \widetilde{e}}) = m \cdot \operatorname{ord}_X(q(X, \gamma(X)) / X^{\widetilde{e}}),
\]
 which shows that $\upsilon_\gamma(q) < \widetilde{e}$. This concludes the proof.
\end{proof}

\begin{definition}
\label{defn integrality exponent2}
Let $g \in K[[X]][Y]$ be as above.
If $q\in K[[X]][Y]$ is monic of degree $0\leq d\leq m-1$ in $Y$, then we call
$$e_g(q):=\lfloor v_{g}(q)\rfloor$$
the \emph{integrality exponent of $q$ with respect to }$g$.
Furthermore, we call
\[
e_{g,d}:=\max\left\{  e_g(q)\mid q\in K[[X]][Y]\text{ monic in $Y$, }\deg q=d\right\}=\lfloor o(\Gamma,d)\rfloor
\]
the \emph{maximal integrality exponent with respect to g in degree $d$}.
\end{definition}

\begin{remark}
By Remark \ref{rem:max-int-coeff}, we have $$0 = e_{g,0} \le e_{g,1}\leq\ldots\leq e_{g,{m-1}}.$$
\end{remark}

\begin{definition}
With notation as above, we call%
\[
E(g)=e_{g,m-1}
\]
the {\emph{maximal integrality exponent with respect to $g$}}.
\end{definition}

For the defining equation of an affine plane curve as in the introduction,
the analogue to  Theorem \ref{theorem:integrality local} is well-known:
\begin{proposition}
\label{prop:integrality-local-II}
Let $f \in K[X][Y]$ be an irreducible monic polynomial of degree
$n\geq 1$ in $Y$. Let $q\in  K[X][Y]$ be monic of degree
$0\leq d\leq n-1$ in $Y$, and let $e$ be the maximal integer such that $\frac{q(x,y)}{x^{e}}$
is integral over $A=K[x][y]=K[X][Y]/\langle f\rangle$. Then
\[
e=\lfloor \upsilon_{f}(q)\rfloor\text{.}%
\]
\end{proposition}
\begin{proof}
We know that an element $\phi\in\TQR(A)=K(x)[y]$ is integral over $A$ iff it is
integral over $K[x]$. By \citet[Theorem 3.2.6]{Stichtenoth08}, the latter is equivalent
to the condition that $\upsilon(\phi) \ge 0$ for every (discrete) valuation
$\upsilon$ on $\TQR(A)$ whose valuation ring contains $K[x]$. Similar to the proof
of Theorem \ref{theorem:integrality local}, this in turn means that  $v_{\gamma}(\phi)\geq 0$
for all Puiseux expansions $\gamma$ of $f$ (see \citet[Section 2.4]{vanHoeij94}).
The result follows.
\end{proof}

\section{Integral Bases and Integrality Exponents}

\label{sect:norm-via-loc-and-compl-the-theory}

As in the previous section, let $g\in K[[X]][Y]$ be a
square-free monic polynomial of degree $m\geq 1$ in $Y$, and write
\[
B=K[[x]][y] =K[[X]][Y]/\!\left\langle g\right\rangle \text{.}
\]
Then $\overline{B}$ coincides with the integral closure of
$K[[x]]$ in $K((x))[y]$. In fact, $\overline{B}$ is a free $K[[x]]$-module of rank $m$:
If $g$ is irreducible, this follows from Remark \ref{rem:free-ness-over-pid};
if $g$ is arbitrary, apply the splitting of normalization  as in Proposition \ref{prop split}
to reduce to the irreducible case.

\begin{definition}
With notation as above, we refer to any integral basis for $\overline{B}$ over
$K[[x]]$ as an {\emph{integral basis for}} $g$. 
\end{definition}

In this section, we study the shape of integral bases in terms of integrality exponents,
and we show how to construct an integral basis for $g$ from given integral bases
for its irreducible factors. With regard to integrality exponents, we use the terminology from
Definition \ref{defn integrality exponent2}. Specifically, $e_{g,d}$ denotes the
maximal integrality exponent with respect to $g$ in degree $d$, for $d=0,\dots, m-1$.

\begin{proposition}
\label{prop loc int bas shape}
With notation as above, for each $d=1, \dots, m-1$, let a monic polynomial
$p_{d}\in K[[X]][Y]$ of degree $d$ in $Y$ and an integer $e_d$ be given. Then
\[
\overline{\mathcal{B}} = \left\{1=p_{0},\frac{p_{1}(x,y)}{x^{e_{1}}},\ldots,\frac{p_{m-1}(x,y)}{x^{e_{m-1}}}\right\}%
\]
is an integral basis for $g$ iff $e_{d}= e_g(p_d)= e_{g,d}$, for $d=1, \dots, m-1$.
\end{proposition}

\begin{proof}
Write ${B}_{d}^{\prime}=\left\langle 1,\frac{p_{1}(x,y)}{x^{e_{1}}},\ldots,\frac{p_{d}(x,y)}{x^{e_{d}}}\right\rangle _{K[[x]]}$
for each $d$, and ${B}^{\prime}=\left\langle \overline{\mathcal{B}} \right\rangle={B}_{m-1}^{\prime}$.

First suppose that $\overline{\mathcal{B}}$ is an integral basis for $g$, and consider a fixed $d$.
Then $e_{d}\leq e_g(p_d) \leq e_{g,d}$.  To show that these numbers are equal, we choose an element
$q\in K[[X]][Y]$ which is monic in $Y$ of degree~$d$ and satisfies $e_g(q)=e_{g,d}$. Then
$\frac{q(x,y)}{x^{e_{g,d}}}$ is integral over $K[[x]]$ by Theorem \ref{theorem:integrality local}, so
$\frac{q(x,y)}{x^{e_{g,d}}}\in {B}_{d}^{\prime}$ since $\overline{\mathcal{B}}$ is an integral basis for $g$.
Writing $\frac{q(x,y)}{x^{e_{g,d}}}$ as a $K[[x]]$-linear combination of the generators
of ${B}_{d}^{\prime}$, and comparing the coefficients of $y^d$, we get
$e_{d}=e_{g,d}$, as desired.

Conversely, suppose that the equalities $e_{d}= e_g(p_d)= e_{g,d}$ hold. Then the
$p_{d}(x,y)/x^{e_d}$ are integral over ${B}$. In particular, ${B}\subset {B}^{\prime
}\subset\overline{{B}}$. Hence, to show that ${B}^{\prime}=\overline{{B}}$ (and,
thus, that $\overline{\mathcal{B}}$ is an integral basis for $g$), it is
enough to prove the following: Given a polynomial $q\in K[[X]][Y]$ of degree
$0\leq d\leq m-1$ in $Y$ such that $\frac{q(x,y)}{x^{e}}\in\overline{{{{B}}}}$
for some $e\in\N$, we must have $\frac{q(x,y)}{x^{e}}\in {B}_{d}^{\prime}$.

We do induction on $d$. There is nothing to show in case $d=0$. If $d\geq1$, let $c \in K[[X]]$ be the leading
coefficient of $q \in K[[X]][Y]$. After factoring out a unit in $K[[X]]$, we can assume that
$c = X^t$, for some $t \in \N$. Write $q$ as a product $q=X^t\widetilde{q}$, with
$\widetilde{q}\in K((X))[Y]$ monic in $Y$. Then, by Lemma \ref{same exp} and  Theorem
\ref{theorem:integrality local}, we have $\lfloor \upsilon_{g}(\widetilde{q})\rfloor \leq e_{g,d} = e_d$, hence
\[
e \leq t + e_g(\widetilde{q}) = t + \lfloor \upsilon_{g}(\widetilde{q})\rfloor \leq t+e_d\text{.}%
\]
This implies $\frac{x^t p_{d}(x,y)}{x^{e}} = \frac{p_{d}(x,y)}{x^{e-t}}\in {B}_{d}^{\prime} \subset \overline{{B}}$.
Since $\deg_{Y}(q-X^t p_{d})<d$ and $\frac{q(x,y)}{x^{e}}-\frac
{x^tp_{d}(x,y)}{x^{e}}\in\overline{{B}}$, the induction hypothesis gives
$
\frac{q(x,y)}{x^{e}}-\frac{x^t p_{d}(x,y)}{x^{e}}\in {B}_{d-1}^{\prime}\subset
{B}_{d}^{\prime}$.
Therefore $\frac{q(x,y)}{x^{e}}\in {B}_{d}^{\prime}$, as claimed.
\end{proof}

\begin{remark}
\label{remark:exist1}
We say that an integral  basis as in  Proposition \ref{prop loc int bas shape} is of \emph{monic triangular type}.
Together with Lemmas \ref{lemma:intA} and \ref{same exp}, the proposition shows the
existence of such bases, where the $p_d$ can even be chosen to be polynomials in $K[X][Y]$.
Henceforth, whenever we speak of an integral basis of monic triangular type, we tacitly assume that
the $p_d$ are chosen that way.
How to actually compute bases of this type in the case where $g=f_i$ is a branch of our
given polynomial $f$ is a topic of Section \ref{section:lowdegree}.
\end{remark}

\begin{proposition}
\label{coro complete split}
Let $g = g_{1}\cdots g_{s}\in K[[X]][Y]$ be the decomposition of a square-free monic polynomial
$g$ of degree $\geq 1$ in $Y$ into its irreducible factors. For $i=1,\ldots,s$, let
\[
\mathcal{B}^{(i)}=\left\{1=p_{0}^{(i)},\frac{p_{1}^{(i)}}{X^{e_{1}^{(i)}}},\ldots,\frac{p_{m_{i}%
-1}^{(i)}}{X^{e_{m_{i}-1}^{(i)}}}\right\}%
\]
represent an integral basis for  $g_{i}$ as in Proposition \ref{prop loc int bas shape}.
With notation as in Proposition \ref{prop split}, for each $i$, set
\[
\widetilde{\mathcal{B}}^{(i)}=\left\{  \frac{b_{i}h_{i}}{X^{c_{i}}},\frac{b_{i}h_{i}p_{1}^{(i)}%
}{X^{c_{i}+e_{1}^{(i)}}},\ldots,\frac{b_{i}h_{i}p_{m_{i}-1}^{(i)}}%
{X^{c_{i}+e_{m_{i}-1}^{(i)}}}\right\}.
\]
Then $\widetilde{\mathcal{B}}^{(1)}\cup\ldots\cup \widetilde{\mathcal{B}}^{(s)}$ represents an integral basis for $g$.
\end{proposition}

\begin{proof}
Immediate from Proposition \ref{prop split}.
\end{proof}

There is an algorithmic way of transferring an integral basis for $g$
as in the proposition to an integral basis for $g$ of monic triangular type:  

\begin{remark}
\label{rem: HN-mts}
Each matrix $M$ with entries in the PID $K[[X]]$ of maximal column rank has a uniquely
determined upper triangular Hermite normal form $H=(p_{ij})$, where the diagonal elements
are of type $p_{ii}=X^{\nu_{i}}$, and the $p_{ij}$, $j>i$, are polynomials in $K[X]$ of degree
$<\nu_{i}$. So the entries of $H$ are polynomials in $X$, while the entries of $M$ are
power series in $X$. To compute $H$ from $M$ via unimodular row operations, we have to
consider suitably truncated power series, that is, we work over
$K[[X]]/\langle X^{t+1}\rangle$, where $t$ is a precision which guarantees a correct result $H$
(see \citet{Durvye84}). Applying this in the situation of Proposition \ref{coro complete split},
starting from a set $\widetilde{\mathcal{B}}^{(1)}\cup\ldots\cup \widetilde{\mathcal{B}}^{(s)}$
as in the proposition and proceeding as in Remark \ref{rem:spec-int-basis}, we get
an integral basis for $g$ of type
\[
\overline{\mathcal{B}} = \left\{{p_{0}(x,y)},
\frac{p_{1}(x,y)}{x^{e_{1}}},\ldots,\frac{p_{m-1}(x,y)}{x^{e_{m-1}}}\right\},%
\]
with polynomials $p_{d}\in K[X][Y]$ of degree $d$ in $Y$.
\end{remark}

\begin{corollary}
\label{cor:mon-triang-basis}
With notation as in Remark \ref{rem: HN-mts} above, let
$\overline{\mathcal{B}}$ be obtained by the recipe given in
that remark. Then $\overline{\mathcal{B}}$ is
an integral basis for $g$ of monic triangular type.
\end{corollary}

\begin{proof}
Fix a degree $d$, and write $\overline{\mathcal{B}}_d$ for the set of elements in $\overline{\mathcal{B}}$
of $y$-degree \ $\leq d$. By construction, the leading coefficient of $p_{d}$ is a
power of $x$, say $x^{\widetilde{e}_d}$. 
Now consider an integral basis $\overline{\mathcal{B}}'$ for $g$ of monic
triangular type (according to Remark \ref{remark:exist1}, such bases exist). Expressing
the $d$th element of $\overline{\mathcal{B}} '$ as a $K[[x]]$-linear combination of the elements
in $\overline{\mathcal{B}}_d$ and comparing the coefficients of $y^d$, we see that $\widetilde{e}_d=0$
and $e_d = e_{g,d}$.
\end{proof}

\section{Normalization of Plane Curves via Localization and Completion:
Local Contributions From Integral Bases for the Branches}

\label{sect:local-contribution}

In this section, $f\in K[X,Y]$ denotes the defining polynomial of an irreducible
plane curve $C\subset\mathbb{A}^{2}(K)$
with assumptions as in the introduction. For simplicity of the presentation,
we focus on the case of a $K$-rational singularity, supposing that this singularity is the origin:
let $P=\langle x,y\rangle\in \Sing(A)$. How to reduce to the case of such
a singularity is a topic of Section  \ref{sect:loc-contr-via-Hensel}.

As in Equation \eqref{equation:branches} of the introduction, factorize $f$ as
\begin{equation*}
f=f_{0}\widetilde{f}=f_{0} f_{1}\cdots f_{r}\text{,}
\end{equation*}
where $f_{0}\in K[[X]][Y]$ is a unit in $K[[X,Y]]$, and $f_{1}, \dots, f_{r}$ are
irreducible Weierstrass polynomials in $K[[X]][Y]$, the branches of $f$
(over $K$, centered at the origin).

\begin{remark}
\label{rem:branches-and-PS}
Recall from Section \ref{subsect: Puiseux-expansions} that the irreducible factors of $f$
in $K[[X]][Y]$ correspond to the conjugacy classes of the Puiseux expansions of $f$. Developed up to a
given degree, the $f_i$, $0 \le i \le r$,  may hence be found by computing all expansions via the
Newton-Puiseux algorithm. There is, however, a more effective approach:
in Section \ref{sect:PuiseuxBlock}, we will present a method  which, based
on Hensel's lemma, makes considerably less use of
the Newton-Puiseux algorithm.
\end{remark}
Proposition \ref{coro complete split} shows us how to obtain an integral basis for $f$
from integral bases for its irreducible factors. It turns out, however, that factors
whose zeros on the line $X=0$ are non-singular points of $C$
can be treated simultaneously, in a more efficient way.
To further simplify our presentation, we assume in addition that $f_0$ is the product of these
factors. Suppose from now on that the origin is the only singularity of $C$ with $X$-coordinate
zero. That is, if $\langle x\rangle\subset Q\in \Sing(A)$,  then $Q=P=\langle x,y\rangle$.
We then also say, that $P$ is the only singularity (of $A$) at $X=0$. How to reduce to this
case is another topic of Section  \ref{sect:loc-contr-via-Hensel}.

Using  Puiseux series, we show in Proposition~\ref{completion basis} that under
the additional assumption above, we can read off an  integral basis for $f$
from such a basis for $\widetilde{f}$.
More precisely, taking Remark~\ref{remark:exist1} into account and starting from a basis for $\widetilde{f}$ of monic
triangular type, we will specify a basis for $f$ of the same type.
This will allow us to  prove in Proposition \ref{prop completion to localization} that
the set $\mathcal{B}_f$  representing the latter basis also represents a set of $K[x]$-module generators for the local contribution to
$\overline{A}$ at $P$. This is a crucial result on our way to computing an integral basis for $\overline{A}$
over $K[x]$ via our local approach. An interesting side remark is that $\mathcal{B}_f$ represents, in
addition, a set of free generators for $\overline{K[X]_{\langle X \rangle}[Y]/\langle f \rangle}$ over ${K[X]_{\langle X \rangle}}$.

\begin{proposition}
\label{completion basis} Write $f$ as a product $f=f_{0}\widetilde{f}$ as in Equation \eqref{equation:branches}
of the introduction, where $f_{0}\in K[[X]][Y]$ is a unit in $K[[X,Y]]$ of degree $m_0$, and
$\widetilde{f}\in K[[X]][Y]$ is a Weierstrass polynomial of degree $m$. Suppose that $P=\left\langle x,y\right\rangle$
is the only singularity at $X=0$. Let
\[
\mathcal{B}_{\widetilde{f}} =\left\{1=p_{0},\frac{p_{1}}{X^{e_{1}}},\ldots,\frac{p_{m-1}}{X^{e_{m-1}}}\right\}%
\]
represent an integral basis for $\widetilde{f}$ of monic triangular type.
Let $\overline{f}_{0} \in K[X][Y]$ be a (monic) polynomial such that
\[
\overline{f}_{0}\equiv f_{0}\operatorname*{mod}X^{e_{m-1}}.
\]
Then
\[
\mathcal{B}_f =\left\{1,Y,Y^{2},\dots,Y^{m_0-1},\overline{f}_{0}p_{0},\frac{\overline{f}_{0}p_{1}%
}{X^{e_{1}}},\ldots,\frac{\overline{f}_{0}p_{m-1}}{X^{e_{m-1}}}\right\}%
\]
represents an integral basis for $f$ of monic triangular type.
\end{proposition}

\begin{proof}
Set  $e_0=0$. Since $\mathcal{B}_{\widetilde{f}}$ represents an integral basis for $\widetilde{f}$,
we obtain from Proposition \ref{prop loc int bas shape}  that the maximal integrality coefficients
with respect to $\widetilde{f}$ satisfy $e_{\widetilde{f},d}=e_d$, for $d=0,\ldots,m-1$. Our assertion, in turn, will follow
by applying  Proposition \ref{prop loc int bas shape} to $f$, provided we show that
$e_{f,d}=0$, for $d=1,\ldots,m_{0}-1$, and $e_{f,d}=e_{d-m_0}$, for $d=m_0,\ldots,n-1$.
For this, let $q\in K[[X]][Y]$ be any monic polynomial of degree $1\leq d \leq n-1$
in $Y$, and let $\eta_{1},\dots,\eta_{d}$ be the Puiseux expansions of $q$. Factorize $q$ as
\[
q=q_{0} \widetilde{q},
\]
where $q_{0}\in K[[X]][Y]$ is a unit in $K[[X,Y]]$ and $\widetilde{q} \in K[[X]][Y]$ is a Weierstrass polynomial.

Let $\gamma_1,\dots,\gamma_{m_0}$ be the Puiseux expansions of $f_{0}$,
and let $\gamma_{m_0+1},\dots,\gamma_n$ be those of $\widetilde{f}$. Note that for the latter
$\gamma_i$, the constant terms are zero. For the former $\gamma_i$, the constant terms,
say $a_0^{(i)}$, are non-zero and,  since we suppose that the origin is the only singularity
at $X=0$, pairwise different. Further note that if for some $1\leq i\leq m_0$ there is no
expansion $\eta_{j}$, $1\leq j\leq d$, with initial term $a_{0}^{(i)}$, then
\[
\upsilon_{f}(q)=\min_{1\leq i\leq n}\upsilon_{\gamma_i}(q)=\min_{1\leq i\leq n}%
\sum_{j=1}^{d}\upsilon(\gamma_{i}-\eta_{j})=0\text{.}%
\]

If $d<m_0$, then we can always find an initial term $a_{0}^{(i)}$ as above
since there are $m_0$ pairwise different initial terms. Since $q$ was chosen
arbitrarily, this implies that $e_{f,d}=0$ for $d<m_0$.

Now suppose that $d\geq m_0$. Then, if $e_{f}(q)>0$, any
$a_{0}^{(i)}$ must appear as the initial term of some $\eta_{j}$,
$1\leq j\leq d$. In particular,
\[
m_0\leq\deg_{Y}(q_{0}).
\]

We claim that $e_{f}(q) \leq e_{d-m_0}$.
This is clear if $e_{f}(q)=0$. To prove the claim if $e_{f}(q)>0$, note
that for any $i$, we have
\[
\upsilon_{\gamma_i}(q)=\upsilon_{\gamma_i}(q_{0})+\upsilon_{\gamma_i}(\widetilde{q})=\left\{
\begin{tabular}
[c]{ll}%
$\upsilon_{\gamma_i}(\widetilde{q})$ & if $\gamma_i(0)=0$\\
$\upsilon_{\gamma_i}(q_{0})$ & if $\gamma_i(0)\neq0$.
\end{tabular}
\ \right.
\]
Hence,
\begin{align*}
\upsilon_{f}(q)  &  =\min_{1\leq i\leq n}\upsilon_{\gamma_i}(q)=\min\left\{  \min_{1\leq
i\leq m_0}\upsilon_{\gamma_i}(q_{0}),\min_{m_0<i\leq n}\upsilon_{\gamma_i}(\widetilde{q})\right\} \\
&  =\min\{\upsilon_{f_{0}}(q_{0}),\upsilon_{\widetilde{f}}(\widetilde{q})\}\leq \upsilon_{\widetilde{f}}(\widetilde{q})\text{.}%
\end{align*}
Since $m_0\leq\deg_{Y}(q_{0})$, we conclude that
\[
e_f(q)=\left\lfloor \upsilon_{f}(q)\right\rfloor \leq\left\lfloor \upsilon_{\widetilde{f}}(\widetilde{q})\right\rfloor
\leq e_{\widetilde{f}, d-\deg_{Y}(q_{0})}\leq e_{\widetilde{f},d-m_{0}}=e_{d-m_0},
\]
which shows our claim.  Since $q$ was chosen arbitrarily, it follows that
$e_{f,d} \leq e_{d-m_0}$. To prove that these numbers are equal, we set $k=d-m_0$
and show that $\overline{f}_{0}p_{k}/x^{e_{k}}$ is integral over $A$.
For this, let $\gamma=\gamma_i$ be any Puiseux expansion of
$f$. If $\gamma(0)=0$, then $\upsilon_{\gamma}(\overline{f}_{0}p_{k})\geq \upsilon_{\gamma}(p_{k})
\geq \upsilon_{\widetilde{f}}(p_{k})\geq e_{k}$. If $\gamma(0)\neq0$, then $\upsilon_{\gamma}(\overline{f}_{0}p_{k})
\geq \upsilon_{\gamma}(\overline{f}_{0})\geq e_{m-1}\geq e_{k}$ by the very definition of $\overline{f}_{0}$.
\end{proof}

\begin{remark}
\label{rem:int-basis-loc}
It is clear from Proposition \ref{prop loc int bas shape} that Proposition \ref{completion basis}
holds more generally for polynomials $p_d$ in $K[[X]][Y]$ instead of just $K[X][Y]$; in addition,
it is not necessary to truncate $f_0$ to $\overline{f}_0$. However, in
its above form,  $\mathcal B_f$ also represents both a set of free generators for
$\overline{K[X]_{\langle X \rangle}[Y]/\langle f \rangle}$ over $K[X]_{\langle X \rangle}$ (by faithful flatness)
and (as we will show next)  a set of $K[x]$-module generators for the
minimal local contribution to $\overline{A}$ at $P$.
\end{remark}

\begin{proposition}
\label{prop completion to localization}
Let
\[
\mathcal{B} =\left\{1=p_{0},\frac{p_{1}}{X^{e_{1}}},\ldots,\frac{p_{n-1}}{X^{e_{n-1}}}\right\}%
\]
represent an integral basis  for $f$ of monic triangular type.
Suppose that $P=\left\langle x,y\right\rangle$ is the only singularity  at $X=0$. Then $\mathcal{B}$ also represents
a set of $K[x]$-module generators for the
minimal local contribution to $\overline{A}$ at $P$.
\end{proposition}

\begin{proof}
By the assumption and Proposition \ref{prop loc int bas shape}, we have
$e_d=e_g(p_d)=e_{g,d}$ for all $d$. Write $A_{d}^{\prime}=\left\langle 1,
\frac{p_{1}(x,y)}{x^{e_{1}}},\ldots,\frac{p_{d}(x,y)}{x^{e_{d}}}\right\rangle _{K[x]}$
for each $d$, and $A^{\prime}= A_{n-1}^{\prime}$. Then
$A\subset A^{\prime}\subset\overline{A}$ by Proposition \ref{prop:integrality-local-II}. 
To show that $A^{\prime}$ is the minimal local contribution to $A$ at $P$,
we proceed in three steps.

\emph{Step 1}: We show: If $q\in K[X][Y]$ is a polynomial of degree $0\leq d\leq n-1$
in $Y$ such that $\frac{q(x,y)}{x^{e}}\in\overline{A}$ for some $e\in\N$, then
$\frac{q(x,y)}{x^{e}}\in A_{d}^{\prime}$. 

As in the proof of Proposition \ref{prop loc int bas shape}, we do induction on $d$.
There is nothing to show in case $d=0$. If $d\geq 1$, let $c\in K[X]$ be the leading
coefficient of $q\in K[X][Y]$ and write   $q=c\widetilde{q}=X^t \tilde{c}\widetilde{q}$,
where $\widetilde{q}\in K((X))[Y]$ is monic in $Y$ of degree $d$, and $\tilde{c}\in K[X]$
is not a multiple of $X$. Then, by  Lemma \ref{same exp} and Theorem
\ref{theorem:integrality local},  $\lfloor v_{f}(\widetilde{q})\rfloor \leq e_{g,d}=e_d$,
hence
\[
e\leq t+ e_g(\widetilde{q}) = t+\lfloor v_{f}(\widetilde{q})\rfloor\leq t+e_d.
\]
This implies
$
\frac{c(x)p_{d}(x,y)}{x^{e}}=\tilde{c}(x)\frac{p_{d}(x,y)}{x^{e-t}}\in
A_{d}^{\prime}\subset\overline{A}\text{.}%
$
Since $\deg_{Y}(q-c p_{d})<d$ and $\frac{q(x,y)}{x^{e}}-\frac
{c(x)p_{d}(x,y)}{x^{e}}\in\overline{A}$, the induction hypothesis gives
$
\frac{q(x,y)}{x^{e}}-\frac{c(x)p_{d}(x,y)}{x^{e}}\in A_{d-1}^{\prime}\subset
A_{d}^{\prime}.%
$
Therefore $\frac{q(x,y)}{x^{e}}\in A_{d}^{\prime}$, as claimed.

\emph{Step 2}: Having defined $A^{\prime}$ as an intermediate $K[x]$-module
$A\subset A^{\prime}\subset\overline{A}$, we now show that $A^{\prime}$ is, in fact, an intermediate
ring and, thus, an $A$-module. That is, we show that $A^{\prime}$ is closed under multiplication.
For this, note that any product of two elements of $A^{\prime}$ takes the form
\[
\frac{q(x,y)}{x^{e}}\cdot\frac{q^{\prime}(x,y)}{x^{e^{\prime}}}=\frac
{q^{\prime\prime}(x,y)}{x^{e+e^{\prime}}}\in \overline{A},
\]
where $q^{\prime\prime}\in K[X][Y]$ satisfies $\deg_{Y}(q^{\prime\prime})<n$.
But then, by step 1, the product is in $A^{\prime}$.

\emph{Step 3}: We finally localize: Set
\[
D=K[x]_{\left\langle x\right\rangle }[y] = K[X]_{\left\langle X\right\rangle }[Y]/\langle f\rangle\text{ and }%
D^{\prime}=\left\langle 1,\frac{p_{1}(x,y)}{x^{e_{1}}},\ldots,\frac
{p_{n-1}(x,y)}{x^{e_{n-1}}}\right\rangle _{K[x]_{\left\langle x\right\rangle }}.%
\]
Then $D\subset D^{\prime}\subset\overline{D}$. In fact, it follows by faithful flatness
that $D^{\prime}=\overline{D}$ (see Remark \ref{rem:int-basis-loc} above). We also
give a direct argument for this equality: Let $\frac{q(x,y)}{h(x)}%
\in\overline{D}$ be an arbitrary element of $\overline{D}$, with polynomials
$q\in K[X][Y]$ of $Y$-degree $<n$ and $h\in K[X]$. Write $h$ as a product
$h=X^{e}\cdot \widetilde{h}$, where $\widetilde{h}(x)$ is a unit in $K[x]_{\left\langle x\right\rangle }$.
Then also $\frac{q(x,y)}{x^{e}}\in\overline{D}$, so that there exists a polynomial $p\in K[X]$
such that $p(x)$ is a unit in $K[x]_{\left\langle x\right\rangle }$ and
$p(x)\frac{q(x,y)}{x^{e}}\in\overline{A}$. It follows from step 1 that
$p(x)\frac{q(x,y)}{x^{e}}\in A^{\prime}\subset D^{\prime}$, so that $\frac
{q(x,y)}{h(x)}\in D^{\prime}$ and, hence, $D^{\prime}=\overline{D}$.
This implies that
\[
A_{Q}^{\prime}=\overline{D}_{Q}=\overline{D_{Q}}=\overline{{A_{Q}}}%
\]
for all $Q\in\operatorname{Spec}(A$) with $\left\langle x\right\rangle \subset Q$.
On the other hand, since we suppose that $P = \langle x, y\rangle$ is the only singularity
at $X = 0$,  we have $\overline{{A_{Q}}} = A_Q$ for all $Q\in\operatorname{Spec}A$
with $\left\langle x\right\rangle \subset Q$ and $Q \neq P$. Moreover,
$A'_{Q}=A_{Q}$ for all $Q$ with $\left\langle x\right\rangle
\not \subset Q$ since the denominators of the generators of $A^{\prime}$ are contained
in $\left\langle x\right\rangle $. We conclude that $A^{\prime} $ is the minimal contribution
to $A$ at $P$.
\end{proof}

\begin{remark}
\label{ypowers}
Computing a set $\mathcal B$ as in Proposition \ref{prop completion to localization} requires
that we know the precision $t$ up to which all power series in $X$ appearing in the process must be developed.
Of course, the maximum power of $X$ appearing in the denominators of the elements of $\mathcal B$
will do. However, this number is known to us only a posteriori, once $\mathcal{B}$ has already been computed.
We will address this problem in Section \ref{sect:loc-contr-via-Hensel}.
\end{remark}

In the setting of the example below, it turns out that the desired precision is $t=3$.

\begin{example}
\label{exampleTwoBranches} Factorizing the polynomial $f = (Y^{3}+X^{2})(Y^{2}-X^{3})+Y^{6}$
from Example \ref{exampleTwoBranches-int} as in Equation
\ref{equation:branches} of the introduction, we get $f = f_0\cdot f_{1} \cdot f_{2}$, where
$f_0 \equiv Y + (-X^{3} - X^{2} + 1)$, $f_{1} \equiv Y^{3} + (X^{3} + X^{2})
Y^{2} + (- X^{2}) Y + X^{2}$, $f_{2} \equiv Y^{2} - X^{3} \mod X^4$.

Applying Proposition \ref{prop split} to the product $f_1\cdot f_2$, we set $h_{1}=f_{2}$ and use the
extended Euclidean algorithm to compute the B\'ezout identity
$a_{1}f_{1}+b_{1}h_{1}=X^{2}$, where
$a_{1}\equiv-4X^{3}Y-2X^{3}
-2X^{2}Y-X^{2}+XY-Y-1$, $b_{1}\equiv-4X^{3}Y^{2}-2X^{3}Y-2X^{2}Y^{2}%
-3X^{3}-2X^{2}Y+XY^{2}-Y^{2}-Y \mod X^4$.

Computing integral bases for $f_1$ and $f_2$, we get
$\left\{1, y, \frac{y^{2}}{x}\right\}$ and $\left\{1, \frac{y}{x}\right\}$, respectively
(proceed as in Section \ref{section:lowdegree} below). Hence,
by Proposition \ref{coro complete split}, the union of the sets
\[
\widetilde{\mathcal{B}}^{(1)} = \left\{  \frac{b_{1} f_{2}}{X^{2}}, \frac{b_{1}
f_{2} Y}{X^{2}}, \frac{b_{1} f_{2} Y^{2}}{X^{3}}
\right\}
\quad \quad \text{and} \quad \quad
\widetilde{\mathcal{B}}^{(2)} = \left\{  \frac{a_{1} f_{1}}{X^{2}}, \frac{a_{1}
f_{1} Y}{X^{3}} \right\}
\]
represents an integral basis for  $f_1 \cdot f_2$. Proceeding as in Remark \ref{rem: HN-mts}, we get the set
\[
\left\{1,Y,\frac{Y^{2}}{X},\frac{Y^{3}}{X^{2}},\frac{Y^{4}+X^{2}Y}{X^{3}}\right\}%
\]
which represents an integral basis for $f_1\cdot f_2$ of monic triangular type.

Finally, applying Proposition \ref{completion basis}, with $\overline{f}_0 = Y + (-X^{3} - X^{2} + 1)$, we conclude that
\[
\left\{1, \overline{f}_0, \overline{f}_0 Y,\frac{\overline{f}_0 Y^{2}}{X},\frac{\overline{f}_0 Y^{3}}{X^{2}},\frac{\overline{f}_0 (Y^{4}+X^{2}Y)}{X^{3}}\right\}%
\]
represents an integral basis for $f$ of monic triangular type.

\end{example}

\section{Normalization of Plane Curves via Localization and Completion: The
Algorithmic Point of View}

\label{sect:loc-contr-via-Hensel}

Let $A=K[C]=K[x,y]=K[X,Y]/\langle f(X,Y)\rangle$ be as before. In this section, we
present our complete algorithm for computing the minimal local contributions
to $\overline{A}$ at the primes $P\in\Sing(A)$. In particular, we discuss effective
ways of finding the branches of $f$, and of computing integral bases for these.
The normalization $\overline{A}$ itself and an integral basis for $\overline{A}$
over $K[x]$, respectively, are then obtained along the lines of Proposition
\ref{prop:local-to-global} and Remark \ref{rem:spec-int-basis-II}.

We start with a sketch of the algorithm. Here, for simplicity of the presentation,
we assume that there are no two singular points of $C$ with the same $X$-coordinate.
This allows us to apply the results of Section \ref{sect:local-contribution} to
all singularities. How to deal with a curve having singular points with the same
$X$-coordinate will be addressed in Remarks \ref{remark rotation} and
\ref{rem no rotation}. See  Examples \ref{example rotation} and
\ref{ex no rotation} for illustrations.

\subsection{Summary of the Algorithm}
\label{section:summary}

From a \textbf{theoretical point of view}, the algorithm involves the
following steps which will be applied to each prime $P\in\Sing(A)$:

\begin{enumerate}
[leftmargin=0.9cm]

\item If $P$ corresponds to a $K$-rational
singularity, \textbf{translate the singularity to the origin}. If $P$
corresponds to a set of conjugate singularities over $K$, extend the base
field $K$ as needed, and \textbf{translate one of the singularities to the
origin}.
\end{enumerate}

\noindent For the singularity at the origin, do (to simplify the presentation,
we will still write $K$ and $f$ for the extended field and the
transformed equation of our curve, respectively):

\begin{enumerate}
[resume,leftmargin=0.9cm]

\item Taking Lemma \ref{same exp} and Theorem \ref{theorem:integrality
    local} into account, use the recipe from Lemma \ref{lemma:intA} to determine
the \textbf{maximum integrality exponent} $E(f)$.

\item \label{step ci}Set $c_0=0$. For $i=1,\dots, r$, determine integers $c_{i}$ as in Proposition
\ref{prop split}. Then \textbf{factorize} $f=\prod_{i=0}%
^{r}f_{i}$ as in Equation \eqref{equation:branches} of the introduction,
developing each $f_{i}$ up to $X$-degree $E(f)+c_{i}$. For this, make use of Hensel's lemma
and the Newton-Puiseux algorithm as described in Sections
\ref{sec Hensel} and \ref{sec Hensel loc} below.

\item \label{step bi} For $i=1,\dots, r$, compute the B\'ezout coefficients $b_{i}$ from Proposition
\ref{prop split} up to  $X$-degree $E(f)+c_{i}$.

\item Use Algorithm \ref{alg:TruncateGeneral} in Section \ref{section:lowdegree} below to compute for each $1\leq i\leq r$
an \textbf{integral basis for the branch} $f_{i}$ of monic triangular type.

\item \label{step int-branches} Based on the results of the previous steps,  use the recipe from Proposition \ref{coro complete split}
to construct for each $1\leq i\leq r$ a set $\mathcal{B}^{(i)}$ as in the proposition. Then convert
$\mathcal{B}^{(1)}\cup\ldots\cup \mathcal{B}^{(r)}$ into an integral basis for $f_1 \cdots f_r$ of monic
triangular type  by using unimodular row operations as in Remark \ref{rem: HN-mts} (take
Corollary \ref{cor:mon-triang-basis} into account).

\item \label{step int-basis-f} Starting from the integral basis for $f_1 \cdots f_r$ obtained in step \eqref{step int-branches}, compute an integral basis
$\overline{\mathcal B}$ for $f$ of monic triangular type. Here, use Algorithm \ref{alg:HenselBasis}
in Section \ref{subsect:local-contr} below which is based on Proposition \ref{completion basis}. It is then
clear from the proposition that $\overline{\mathcal B}$ also represents a set of $K[x]$-module generators for the
minimal local contribution to $\overline{A}$ at $\langle x,y \rangle$.

\item \label{step inv-transl} If necessary, apply the \textbf{inverse translation} to the elements of the local
contribution to restore the singularity to the original position.

\item If $P$ corresponds to a set of conjugate singularities, then use Remark
\ref{rmk conjugated} below to modify the numerators and denominators of the local
contribution obtained in steps \eqref{step int-basis-f} and \eqref{step inv-transl}  for one of the singularities over the extended field
in order to obtain the \textbf{minimal local contribution to
$\boldsymbol{\overline{A}}$ at $\boldsymbol{P}$} over the original field.

\end{enumerate}

From a \textbf{practical point of view}, we face the problem that in the
approach outlined above, we need to determine the $c_{i}$ \emph{a priori}.
Moreover, the computation of the B\'ezout coefficients $b_{i}$ via the
extended Euclidean algorithm is very time consuming. To remedy these issues,
relying on Proposition \ref{prop local int basis modified} below, we will
replace the $b_{i}$ and $c_{i}$ in steps \eqref{step ci} and \eqref{step bi} by
 easier to construct polynomials $\beta_{i}\in K[X,Y]$ and
appropriate vanishing orders, respectively.

We refer to the following subsections for more details.

\subsection{Puiseux Expansions}
\label{sect:PuiseuxBlock}

As already pointed out in Remark \ref{rem:branches-and-PS}, the factors
$f_{i}$ appearing in the decomposition
\[
f=f_{0}\widetilde{f}=f_{0} f_{1}\cdots f_{r}%
\]
of $f$ as in Equation \eqref{equation:branches} of the introduction can be found
by computing the Puiseux expansions of $f$ (up to a given degree). Since this is
expensive, however, we propose a different approach which, via
Hensel's lemma, makes considerably less use of the Newton-Puiseux algorithm.

In describing the new approach, we use the following terminology.
If $g\in K[[X]][Y]$ is any square-free monic polynomial of degree $\geq 1$ in $Y$,
we partition the set of all Puiseux expansions of $g$ into \emph{Puiseux
blocks}. The Puiseux block represented by an expansion $\gamma$ with
$\gamma\left(  0\right)  =0$ is obtained by collecting all expansions whose
rational part agrees with that of $\gamma$ and whose first non-rational term
is conjugate to that of $\gamma$ over $K((X))$. The \emph{Puiseux segment}
represented by an expansion $\gamma$ with $\gamma\left(  0\right)  =0$
is defined to be the union of all blocks whose expansions have  the same
initial exponent as $\gamma$. That is, we have one Puiseux segment for each face of the
Newton polygon of $g$. In addition, all Puiseux expansions $\gamma$ of $g$
with $\gamma\left(  0\right)  \not =0$ are grouped together to a single
Puiseux block of an extra Puiseux segment. In this way, the Puiseux expansions
of $g$ are divided into Puiseux segments, each segment consists of Puiseux
blocks, and each block is the union of classes of conjugate expansions.

\begin{example}
\label{examplePuiseux2} Suppose that the Puiseux expansions of our given
polynomial $f$ are
\begin{equation*}
\begin{aligned}[t]
\gamma_1 &= 1 + X^2 + \dots, \\
\gamma_2 &= -1 + 3X + \dots, \\
\gamma_3 &= a_1 X^{3/2} + 2X^2 + \dots, \\
\gamma_4 &= a_2 X^{3/2} + 2X^2 + \dots, \\
\gamma_5 &= X + 3 X^2 + \dots, \\
\end{aligned}
\quad\quad
\begin{aligned}[t]
\gamma_6 &= X + b_1 X^{5/2} + X^3 + \dots, \\
\gamma_7 &= X + b_2 X^{5/2} + X^3 + \dots, \\
\gamma_8 &= X + b_1 X^{5/2} + X^{4} + \dots, \\
\gamma_9 &= X + b_2 X^{5/2} + X^{4} + \dots,
\end{aligned}
\end{equation*}

\noindent where $\{\gamma_{3}, \gamma_{4}\}$, $\{\gamma_{6}, \gamma_{7}\}$ and
$\{\gamma_{8}, \gamma_{9}\}$ are pairs of conjugate Puiseux series. Then
$\{\gamma_{1}, \gamma_{2}\}$ is the segment of expansions $\gamma$ of $f$
with $\gamma\left(  0\right)  \not =0$. Another segment is $\{\gamma_{3},
\gamma_{4}\}$ which consists of one block containing a single class of
conjugate expansions. All the other expansions form a single segment,
consisting of the blocks $\{\gamma_{5}\}$ and $\{\gamma_{6}, \gamma_{7},
\gamma_{8}, \gamma_{9}\}$. The last block contains two classes of conjugate
expansions, namely $\{\gamma_{6}, \gamma_{7}\}$ and $\{\gamma_{8}, \gamma
_{9}\}$.
\end{example}

\subsection{Hensel's Lemma\label{sec Hensel}}

We will use Hensel's lemma in the following form:

\begin{lemma}[Hensel's Lemma]
\label{lemma:hensel} Let $F \in K[[X]][Y]$ be a monic polynomial in $Y$.
Assume that $F(0, Y) = g_{0} h_{0}$, with monic polynomials $g_{0}, h_{0} \in
K[Y]$ such that $\langle g_{0}, h_{0}\rangle= K[Y]$. Then there exist unique
monic polynomials $G, H \in K[[X]][Y]$ such that

\begin{enumerate}
\item $F= GH$,

\item $G(0, Y) = g_{0}$, $H(0, Y) = h_{0}$.
\end{enumerate}

\noindent
In fact, for each $k \in{\mathbb{N}}$, there exist unique $g_{k}, h_{k} \in
K[X,Y]$ of $X$-degree $\leq k$ such that

\begin{enumerate}
[resume]

\item $F \equiv g_{k} h_{k}$ in $(K[[X]] / \langle X^{k+1} \rangle)[Y]$,

\item $g_{k} \equiv g_{i}$, $h_{k} \equiv h_{i}$ in $(K[[X]]/ \langle X^{i+1}
\rangle)[Y]$, $i = 0, \dots, k-1$.
\end{enumerate}
\end{lemma}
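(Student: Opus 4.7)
The plan is to prove the statement about truncations first (parts (3) and (4)) by induction on $m$, and then obtain the formal power series factorization (parts (1) and (2)) by passing to the limit. Base case $m=0$: take $g_0, h_0$ as given in the hypothesis, noting that $F(0,Y) = g_0 h_0$ is exactly the required congruence modulo $X$.

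For the inductive step, suppose $g_{m-1}, h_{m-1} \in K[X,Y]$ of $X$-degree at most $m-1$ have been constructed. I would write any candidate lift in the form
\[
g_m = g_{m-1} + X^m u, \qquad h_m = h_{m-1} + X^m v,
\]
with $u, v \in K[Y]$; the compatibility condition (4) forces the correction to live only in $X$-degree $m$. Expanding the product modulo $X^{m+1}$ and using the induction hypothesis $F \equiv g_{m-1}h_{m-1}$ modulo $X^m$, I get
\[
F - g_m h_m \equiv \bigl(F - g_{m-1}h_{m-1}\bigr) - X^m\bigl(u\, h_{m-1} + v\, g_{m-1}\bigr) \pmod{X^{m+1}}.
\]
Write $F - g_{m-1}h_{m-1} = X^m w_m + O(X^{m+1})$ with $w_m \in K[Y]$ known. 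Reducing modulo $X$ turns $h_{m-1}$ into $h_0$ and $g_{m-1}$ into $g_0$, so the lifting step reduces to solving the Bézout-type equation
\[
u\, h_0 + v\, g_0 = w_m \quad \text{in } K[Y].
\]

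Now the coprimality hypothesis $\langle g_0, h_0\rangle = K[Y]$ gives a solution; to pin it down I would reduce $u$ modulo $g_0$ and $v$ modulo $h_0$, using that $g_0, h_0$ are monic to do polynomial division in $K[Y]$. This produces the unique pair with $\deg_Y u < \deg_Y g_0$ and $\deg_Y v < \deg_Y h_0$: any two solutions differ by $(u', v')$ with $u' h_0 = -v' g_0$, and since $\gcd(g_0,h_0)=1$ and $\deg_Y u' < \deg_Y g_0$, one must have $u' = v' = 0$. These degree bounds also guarantee that $g_m, h_m$ remain monic in $Y$ of the same $Y$-degrees as $g_0, h_0$, which is the bookkeeping point that I expect to be the main subtlety of the argument (one has to check that the correction does not disturb the leading term in $Y$).

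For the global statement (parts (1) and (2)), the sequences $(g_m)$ and $(h_m)$ are coherent by (4), so they define elements $G, H \in K[[X]][Y]$ that are monic in $Y$; the relation $F = GH$ in $K[[X]][Y]$ follows because $F - GH$ vanishes modulo every $X^{m+1}$ and $K[[X]]$ is separated in the $X$-adic topology. Global uniqueness of $G, H$ reduces to the uniqueness of each truncation $g_m, h_m$ already established.
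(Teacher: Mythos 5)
Your proof is correct and is the standard inductive lifting argument; the paper gives no proof of its own (it cites Abhyankar) and merely remarks that $g_m,h_m$ are found by solving, at each $X$-degree, a linear system of $n$ equations in $n$ unknowns, which is exactly your B\'ezout equation $u\,h_0+v\,g_0=w_m$ with the normalization $\deg_Y u<\deg_Y g_0$, $\deg_Y v<\deg_Y h_0$. The only point worth keeping explicit is the one you already flag: uniqueness in (3)--(4) is uniqueness within the class of corrections satisfying these degree bounds (equivalently, keeping $g_m,h_m$ monic of the fixed $Y$-degrees), and since any monic $G,H$ with $G(0,Y)=g_0$, $H(0,Y)=h_0$ necessarily have truncations of this form, this also yields the existence and uniqueness of the limit factorization $F=GH$.
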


\begin{proof}
See, for example, \citet{Abhyankar}.
\end{proof}

Conditions (3) and (4) imply that the polynomials $g_{k}$ and $h_{k}$ can be
computed inductively along the $X$-degree, solving for each $k$ a system of
$\ell$ linear equations in $\ell$ variables, where $\ell$ is the $Y$-degree of $F$: For
each $0 \leq i \leq \ell-1$, we get an equation by comparing the coefficients of
$X^{k}Y^{i}$ in $F$ with those in $g_{k}h_{k}$. For further reference in this paper,
we state the resulting procedure as Algorithm \ref{alg:Hensel}, \texttt{HenselLift}, omitting the
actual computation steps.

\begin{algorithm}     
\caption{\texttt{HenselLift}}          
\label{alg:Hensel}
\begin{algorithmic}[1]
\REQUIRE $\degbound\in\N$; $F \in K[X,Y]$ monic in $Y$;  $g_0, h_0 \in K[Y]$ monic with $F(0, Y) = g_0 h_0$,
$\langle g_0, h_0 \rangle = K[Y]$.
\ENSURE $G, H \in K[[X]][Y]$ developed up to $X$-degree $\degbound$, with $G(0, Y) = g_0$,
$H(0, Y) = h_0$, and $F \equiv GH\mod X^{\degbound+1}$.
\end{algorithmic}
\end{algorithm}

Our first application of $\HenselLift$ is to
address the Puiseux segment consisting of all Puiseux expansions $\gamma$ of
$f$ with $\gamma\left(  0\right)  \not =0$. That is, we decompose $f$ as
$f=f_{0} \widetilde{f}$ as in Equation \eqref{equation:branches} of the
introduction, separating the
unit $f_{0}$ from the component $\widetilde{f}$ vanishing at the origin (we
develop $f_{0}$ and $\widetilde{f}$ up to the desired $X$-degree). This is summarized
in Algorithm \ref{alg:Splitting-off-the-unit}, \texttt{SeparateUnit}.

\begin{algorithm}[H]                      
\caption{\texttt{SeparateUnit}}          
\label{alg:Splitting-off-the-unit}
\begin{algorithmic}[1]
\REQUIRE $\degbound\in\N$; $f \in K[X,Y]$ irreducible and monic in $Y$, with $f(0,0)=0$.
\ENSURE $f_0, \widetilde{f} \in K[[X]][Y]$ as in Equation \eqref{equation:branches} of the introduction, developed
up to $X$-degree $\degbound$.
\STATE compute monic $g_0, h_0 \in K[Y]$ with $Y \nmid g_0$, $h_0 = Y^k$ for some $k \in \N_{\geq 1}$, and $f(0, Y) = g_0 h_0$
\RETURN $\HenselLift(\degbound,f,g_0, h_0)$
\end{algorithmic}
\end{algorithm}

\begin{example}
Let $f = (Y-X)(Y+X)(Y+2X) + Y^{7}\in\Q[X,Y]$. Then there are
four Puiseux expansions of $f$ with $\gamma(0)\not =0$ and three expansions with
$\gamma(0)=0$ (note that $f(0,Y) = Y^3+Y^7=Y^3(1+Y^4)$). We write
$\gamma_{1}, \dots, \gamma_{4}$ for the former expansions and $\gamma_{5} = X
+ \dots, \gamma_{6} = -X + \dots, \gamma_{7} = -2X + \dots$ for the latter
ones. We apply Algorithm \ref{alg:Splitting-off-the-unit} to develop the products
$f_{0}=\gamma_{1} \cdots\gamma_{4}$ and $\widetilde{f} =\gamma_{5} \gamma_6
\gamma_{7}$ up to $X$-degree 2, calling $\HenselLift(2, f,g_{0}%
,h_{0})$ with $g_{0} = 1 + Y^{4}$ and $h_{0} = Y^{3}$.
The output is $g_{2} = 5X^{2}Y^{2}-2XY^{3}+Y^{4}+1$, $h_{2} = Y^{3} +
2XY^{2} -2X^{2}Y$.
\end{example}

An alternative way would be to decompose $f=f_{0} \widetilde{f}$ by means of the
Weierstrass Division Theorem. However, applying Hensel's lemma allows for
more generality since it does not require to have one factor vanishing at the origin.
This will be useful in  Section \ref{sec Hensel loc} below, where we will study a local
version of Hensel's lemma. Furthermore, in cases where the singularity under
consideration has no $K$-rational coordinates, we may use Hensel's lemma  to modify
our algorithms so that there is no need to move the singularity to the origin.
As a consequence, no field extension is required at this point. For brevity
of the presentation, we do not give the details of this strategy.

\subsection{A Local Version of Hensel's Lemma\label{sec Hensel loc}}

Being able to decompose $f=f_{0} \widetilde{f}$ as discussed above, we now aim
at factorizing $\widetilde{f}\in K[[X]][Y]$ into the branches $f_1,\dots, f_r$ of $f$.

We begin by separating the different
Puiseux segments of $\widetilde{f}$. To describe how, let $g\in K[[X]][Y]$ be any
square-free monic polynomial of degree $m \geq 1$ in $Y$, and such that
$\gamma(0)=0$ for each Puiseux expansion $\gamma$ of $g$. Then,
since all factors of $g$ vanish at the origin, we cannot apply
Hensel's lemma directly: no matter how we choose $g_{0}, h_{0}$ with
$g=g_0h_0$, the condition $\langle g_{0}, h_{0}\rangle= K[Y]$ will not be
satisfied (consider, for example, the products
$(Y-\gamma_{1})(Y-\gamma_{2})(Y-\gamma_{3})$ and $(Y-\gamma_{4})(Y-\gamma_{5}) $
in Example \ref{exampleTwoBranches-int}).

To overcome this problem, we transform $g$ as explained in what
follows. Write
\begin{align*}
\gamma_{1}  &  = a_{1}^{(1)} X^{t_{1}^{(1)}} + a_{2}^{(1)} X^{t_{2}^{(1)}} + \dots,\\
\gamma_{2}  &  = a_{1}^{(2)} X^{t_{1}^{(2)}} + a_{2}^{(2)} X^{t_{2}^{(2)}} + \dots,\\
&  \vdots\\
\gamma_{m}  &  = a_{1}^{(m)} X^{t_{1}^{(m)}} + a_{2}^{(m)} X^{t_{2}^{(m)}} + \dots
\end{align*}
for the Puiseux expansions of $g$, where all $ a_{1}^{(i)}$ are non-zero. Suppose
for simplicity that $t := t_{1}^{(1)} = \min_{1 \leq i \leq m}t_{1}^{(i)}$.
Naively,  to separate the Puiseux segment corresponding to $t$ from
the rest, we are tempted to substitute $X^{t}Y$ for $Y$ in $g=
(Y-\gamma_{1}) \cdots(Y-\gamma_{m})\in K[[X]][Y]$ and cancel out $X^{t}$ in
all factors. However, this would introduce fractional exponents and force
us, thus, to leave $K[[X]][Y]$. We therefore proceed in a different way: Write
$t = u/v$, with $u,v \in\mathbb{N}_{\geq1}$ coprime, and set
\begin{align*}
F(X, Y)  &  = g(X^{v}, X^{u}Y) / X^{m u}\\
&  = (Y - (a_{1}^{(1)} + a_{2}^{(1)} X^{\tilde t_{2}^{(1)}} + \dots))\cdots(Y -
(a_{1}^{(m)} X^{\tilde t_{1}^{(m)}} + \dots)) \in K[[X]][Y].
\end{align*}

Then  $F$ has factors not vanishing at the origin, and these correspond to the
Puiseux expansions of $f$ forming the Puiseux segment with smallest initial
exponent $t$. Applying Hensel's lemma, reversing the transformation, and
iterating the process yields Algorithm \ref{alg:SegmentSplitting}.

\begin{algorithm}                      
\caption{\texttt{SegmentSplitting}}          
\label{alg:SegmentSplitting}
\begin{algorithmic}[1]
\REQUIRE        $\degbound \in \N$; $g\in K[[X]][Y]$ monic in $Y$, developed up to $X$-degree $\degbound$; we suppose
                that $\gamma(0)=0$ for each Puiseux expansion $\gamma$ of $g$.
\ENSURE Weierstrass polynomials $g_1, \dots, g_\ell \in K[[X]][Y]$, developed up to $X$-degree $\degbound$,
               with $g\equiv g_1 \cdots g_\ell\mod X^{\degbound+1}$, and each $g_i$ corresponding to
               precisely one Puiseux segment of $g$ as outlined above.
\STATE from the Newton polygon of $g$, read off the pairwise different initial exponents $t_1, \dots, t_\ell$
of the Puiseux expansions of $g$
\IF{$\ell = 1$} \RETURN $\{g\}$
\ENDIF
\STATE $t = u/v = \min\{t_1, \dots, t_\ell\}$, with $u,v \in\mathbb{N}_{\geq1}$ coprime
\STATE $m = Y$-degree of $g$
\STATE $F = g(X^v, X^uY) / X^{mu}$
\STATE compute monic $g_0, h_0 \in K[Y]$ with $Y \nmid g_0$, $h_0 = Y^k$
for some $k \in \N_{\geq 1}$, and $F(0, Y) = g_0 h_0$
\STATE $G, H= \HenselLift(v\degbound, F, g_0, h_0)$
\STATE  $\widetilde{G} = X^{(\deg_Y{G}) u}G$, $\widetilde H = X^{(\deg_Y{H}) u}H$
\STATE $g_1 = \widetilde G(X^{1/v}, Y/X^{u/v})$, $h = \widetilde{H}(X^{1/v}, Y/X^{u/v})$
\RETURN $\{g_1\} \cup \SegmentSplitting(\degbound,h)$
\end{algorithmic}
\end{algorithm}

\begin{remark}
See \citet[Theorem 5.1.17]{JP} for an alternative approach which extends the
Weierstrass Division Theorem.
\end{remark}

\begin{example}
\label{example separate unit}
Let $f=(Y^{2}+2X^{3})((Y+2X^{2})^{2}+X^{5}) + Y^{6}\in\Q[X,Y]$. Evaluating $f$ at $X =
0$, we get $f(0, Y) = (Y^{2}+1)Y^{4}$. Applying $\SeparateUnit(8, f)$ gives the
(truncated) factors
\begin{tiny}
\begin{align*}
f_0 &= -48X^8Y-210X^8-8X^7Y+56X^7+32X^6Y-4X^6-8X^5Y-X^5+12X^4-2X^3-4X^2Y+Y^2+1, \\
\widetilde{f} &= -46X^8Y^2+16X^8Y+8X^7Y^2-32X^6Y^3+2X^8+4X^6Y^2+8X^5Y^3+8X^7+X^5Y^2\\
&+8X^5Y+4X^4Y^2+2X^3Y^2+4X^2Y^3+Y^4.
\end{align*}
\end{tiny}

\vspace{-0.4cm}\noindent The Puiseux expansions of $\widetilde{f}$ are
\begin{tiny}
\begin{align*}
\gamma_{1,2}  &= a_{1,2} X^{3/2}  +a_{1,2} X^{9/2} -4X^5  -6 a_{1,2} X^{11/2}  +16 X^6 +41/2 a_{1,2} X^{13/2} -52 X^{7} +\dots, \\
\gamma_{3,4}  &= -2X^2 +b_{1,2} X^{5/2}  + 16 b_{1,2} X^{13/2} + 48 X^7  +    \dots,
\end{align*}
\end{tiny}

\vspace{-0.4cm}\noindent
with roots $a_{1,2}$ of $Z^{2}+2$ and $b_{1,2}$ of $Z^{2}+1$.

\noindent
The smallest initial exponent $t$ of these expansions is $t=u/v=3/2$. We compute

\begin{tiny}
\begin{align*}
F(X,Y)  &= \widetilde{f}(X^{2},X^{3}Y) / X^{12}= -46 X^{10}Y^2-32 X^{9}Y^3-64X^9Y+8 X^{8}Y^2+8 X^{7}Y^3+16X^8+16 X^{7}Y \\
& +4X^{6}Y^2+X^{4}Y^2+2X^{4}+4X^{2}Y^2+4XY^3+Y^4+8X^{2}+8XY+2Y^2.
\end{align*}
\end{tiny}

\vspace{-0.4cm}\noindent
Now note that $F(0,Y)=(Y^{2}+2)Y^{2}$. Applying Hensel's lemma
to the factors $Y^{2}+2$ and $Y^{2}$, we obtain
\begin{align*}
G(X,Y) &= \ldots + 116 X^{10}-48 X^9Y-16X^8+8X^7Y+4X^6+Y^2+2, \\
H(X,Y) &= \ldots + 30 X^{10}+16 X^9 Y-8 X^8+X^4+4X^2+4XY+Y^2.
\end{align*}
So if we set $\widetilde{G} (X, Y) = X^6 G(X,Y)$, $\widetilde{H} (X, Y) = X^6 H(X,Y)$ and apply the inverse transformations,
we get
\begin{scriptsize}
\begin{align*}
g_{1}  &= \widetilde{G} (X^{1/2}, Y/X^{3/2}) = \ldots + 116X^8-48 X^6Y-16X^7+8X^5Y+4X^6+Y^2+2X^3,\\
h  &=  \widetilde{H} (X^{1/2}, Y/X^{3/2}) = \ldots + 30 X^8+16X^6Y-8X^7+X^5+4X^4+4X^2Y+Y^2.
\end{align*}
\end{scriptsize}

\vspace{-0.4cm}\noindent
Since there are only two conjugacy classes of Puiseux expansions of $\widetilde{f}$,
we may conclude that $f_1=g_1$ and $f_2=h$ are the branches of $f$.

\end{example}

The next step is to split the Puiseux segments of $\widetilde{f}$
into their Puiseux blocks. Fix such a segment $\Gamma=\{\gamma_1,\dots,
\gamma_m\}$. Then the $\gamma_i$ satisfy $\gamma_i(0)=0$.  Moreover,
by the very definition of a segment, the $\gamma_i$ must have the
same initial exponent, say, $t=u/v$. Write $g$ for the factor of $\widetilde{f}$
corresponding to $\Gamma$, and $\eta$ for the largest common initial part
of the rational parts of the $\gamma_i$.

If $\eta =0$, set $F(X, Y)  = g(X^{v}, X^{u}Y) / X^{mu}$ as
above. Then $F(0,Y)$ will have different factors, all not vanishing at the origin,
and corresponding to the individual blocks. Hence, we can separate these blocks
iteratively, using Hensel's lemma as above.

If $\eta \not=0$, set
$$\widetilde{g}(X,Y) = g(X, Y + \eta).$$
Then the  Puiseux expansions of $\widetilde g$ coincide with those of $g$, except
that we omit the common initial term $\eta$. We can then use
segment splitting combined with block splitting to separate the blocks. Substituting
$Y - \eta$ for $Y$ to reverse the transformation, we get the desired factors.

We summarize this strategy in Algorithm \ref{alg:BlockSplitting}, \texttt{BlockSplitting}.
In Line \ref{linearFactor} of the algorithm, the presence of a power of a linear
factor implies that the relevant expansions share a common non-zero rational part.
It is, then, possible to decompose the corresponding factor further.

\begin{algorithm}                      
\caption{\texttt{BlockSplitting}}          
\label{alg:BlockSplitting}
\begin{algorithmic}[1]
\REQUIRE        $\degbound \in \N$; $g\in K[[X]][Y]$ monic in $Y$, developed up to $X$-degree $\degbound$; we suppose
                that $\gamma(0)=0$ for each Puiseux expansion $\gamma$ of $g$, and that these
                expansions form a single Puiseux segment.
\ENSURE               Weierstrass polynomials $g_1, \dots, g_k \in K[[X]][Y]$, developed up to $X$-degree $\degbound$,
               with $g \equiv g_1 \cdots g_k\mod X^{\degbound+1}$, and each $g_i$ corresponding to
               precisely one Puiseux block of the given Puiseux segment.

\STATE $L = \emptyset$
\STATE $\eta = $ the common rational part of all Puiseux expansions of $g$
\IF{$\eta = 0$}
  \STATE $t = u/v$, with $u,v \in\mathbb{N}_{\geq1}$ coprime, the initial exponent of the Puiseux expansions of
   $g$ (which is the same for all expansions by assumption and is obtained from the Newton polygon of $g$)
  \STATE $m = Y$-degree of $g$
    \STATE $F = g(X^v, X^uY) / X^{mu}$
  \STATE compute $g_0, h_0 \in K[Y]$ with $g_0 \neq 1$ irreducible or a power of an
               irreducible polynomial, $g_0$, $h_0$ coprime, and $F(0, Y)= g_0 h_0$.
  \IF{$h_0 \neq 1$}
    \STATE $G, H = \HenselLift(v\degbound, F, g_0, h_0)$
    \STATE $g_1 = G(X^{1/v}, Y/X^{u/v})$, $h = H(X^{1/v}, Y/X^{u/v})$
    \IF{$g_0$ is not a power of a linear factor in $Y$} \label{linearFactor}
      \RETURN $\{g_1\} \cup \BlockSplitting(\degbound, h)$
    \ELSE
      \RETURN $\BlockSplitting(\degbound, g_1)\cup \BlockSplitting(\degbound, h)$
    \ENDIF
  \ELSE
    \RETURN $\{g\}$
  \ENDIF
\ELSE
  \STATE $\widetilde g = g(X, Y + \eta)$
  \STATE \{$g_1, \dots, g_\ell\} = \SegmentSplitting(\degbound, \widetilde g)$
  \FOR{$1 \le i \le \ell$}
    \STATE \{$h_1, \dots, h_s\} = \BlockSplitting(\degbound, g_i)$
    \STATE $L = L \cup \{h_1(X, Y - \eta), \dots, h_s(X, Y - \eta)\}$
  \ENDFOR
  \RETURN $L$
\ENDIF
\end{algorithmic}
\end{algorithm}

\begin{remark}
We expect that the  ideas from \citet[Theorem 5.1.20]{JP} can in some cases also be used for
our purposes. However, as stated in \citet{JP}, the theorem is not as general as we require.
\end{remark}

The final step on our way to find the  branches $f_i$ is to separate the factors corresponding
to different conjugacy classes of Puiseux expansions within each given block.
For this, all algorithms known to us require that we  extend our base
field $K$ -- a factor $(Y-{\gamma}_{1})\cdots(Y-{\gamma}_{s})$ is obtained by computing the
individual  $\gamma_{i}$ up to the desired $X$-degree via the Newton-Puiseux algorithm,
and expanding the product. Of course, this last step is only needed if there is a Puiseux block
containing more than one conjugacy class of Puiseux expansions.

In Algorithm \ref{alg:Splitting}, we sum up the discussion above, arriving at
a general \texttt{Splitting} algorithm.

\begin{algorithm}                      
\caption{\texttt{Splitting}}         
\label{alg:Splitting}
\begin{algorithmic}[1]
\REQUIRE $\degbound \in \N$; $f \in K[X][Y]$ irreducible and monic in $Y$ of degree $n$.
\ENSURE $f_0, f_1, \dots, f_r \in K[[X]][Y]$ with
$f = f_0 f_1 \cdots f_r$ as in Equation \eqref{equation:branches} of the
introduction, all developed up to $X$-degree $\degbound$.
\STATE $f_0, \widetilde{f} = \SeparateUnit(\degbound, f)$
\STATE $L = \{f_0\}$
\STATE $\{g_1, \dots, g_{\ell}\} = \SegmentSplitting(\degbound, \widetilde{f})$, the factors corresponding to the different Puiseux
segments of $\widetilde{f}$
\FOR{$i = 1, \dots, \ell$}
\STATE $\{h_1, \dots, h_{s}\} = \BlockSplitting(\degbound, g_i)$
\FOR{$j = 1, \dots, s$}
\STATE $\Delta_1, \dots, \Delta_m = $ sets of singular parts of the Puiseux expansions of $h_j$, grouped into conjugacy classes
\IF{$m > 1$}
\FOR{$k = 1, \dots, m$}
\STATE $\gamma_{1}, \dots, \gamma_{s} = $ Puiseux expansions associated to $\Delta_k$, developed up to $X$-degree $\degbound$
\STATE $p = (Y- \gamma_{1}) \cdots (Y - \gamma_{s})$, developed up to $X$-degree $\degbound$
\STATE $L = L \cup \{p\}$
\ENDFOR
\ELSE
\STATE $L = L \cup \{h_j\}$
\ENDIF
\ENDFOR
\ENDFOR
\RETURN $L$
\end{algorithmic}
\end{algorithm}

\subsection{Integral Bases for the Branches}
\label{section:lowdegree}

We now explain how to find integral bases for the branches of $f$. More generally,
let  $g\in K[[X]][Y]$ be an irreducible Weierstrass polynomial of degree $m$.
Then the Puiseux expansions of $g$ form a complete conjugacy class
$\Gamma=\{\gamma_1,\dots, \gamma_m\}$.
Let $K\subset K(\alpha)$ be a finite field extension
over which all $\gamma_i$ are defined.

Taking Proposition \ref{prop loc int bas shape} and Corollary \ref{cor:mon-triang-basis}
into account, our goal is an algorithm, Algorithm \ref{alg:TruncateGeneral},  which constructs for each $1\leq d \leq m-1$ a monic polynomial $p_d\in K[X][Y]$
of degree $d$ in $Y$ whose integrality exponent satisfies $e_g(p_d) =\lfloor v_{g}(p_d)\rfloor$.
We begin by introducing what we call the extended characteristic exponents of $g$.
We use the following notation:
\begin{notation}
Given $\gamma\in L\{\{X\}\}$ and $t \in \mathbb{Q}_{\geq0}$, we write
$\overline{\gamma}^{(t)}$ (respectively $\overline{\gamma}^{(<t)}$)
for the truncation of $\gamma$ to degree $t$ (respectively $<t$).
\end{notation}

Consider the factorization
\begin{equation}
\label{eq factorization}
g=g_{1}\cdots g_{s}%
\end{equation}
of $g$ into absolutely irreducible Weierstrass polynomials $g_{i}\in K(\alpha)[[X]][Y]$,
and write $k := \deg(g_1) = \ldots = \deg(g_s)$. Factorize $g_1$ as
$$
g_1 = \prod (Y-\zeta (\omega^\ell X^{1/k})),
$$
where $\zeta\in K(\alpha)[[T]]$, and $\omega$  is a primitive $k$th root of unity (see Section
\ref{subsect: Puiseux-expansions}). Consider the action of $\operatorname*{Gal}(K(\alpha)/K)$
on $K(\alpha)[[T]]$, let $G= \operatorname*{Gal}(K(\alpha)/K)/\operatorname{Stab}(\zeta)$,
and set $H=G\times\mathbb{Z}/k\mathbb{Z}$. Let $\eta(X)=\zeta (X^{1/k})$.
Given $\sigma=(\overline{\varphi}, \ell)\in H$, write $\sigma\eta(X)=\varphi\zeta (\omega^\ell X^{1/k})$.
Then
\[
g=%
{\displaystyle\prod\nolimits_{\sigma\in H}}
(Y-\sigma\eta(X)).
\]

Now we truncate: If $t \in \mathbb{Q}_{\geq0}$, let
\[
l(t)=\left\vert H\overline{\eta}^{(t)}\right\vert
\]
be the orbit length under the action of $H$ after truncation. Moreover, write
\[
\left\{  l_{0}%
,\ldots,l_{\nu}\right\}:=\left\{  l(t)\mid t\in\mathbb{Q}_{\geq0}\right\},
\]
where the $l_i$ are sorted such that  $1=l_{0}<\ldots<l_{\nu}=m$. Then
\mbox{$l_i | l_{i+1}$}, for $0 \le i < \nu$. The corresponding minimal
truncation degrees achieving these orbit lengths are
\[
t_{i}:=\min\left\{  t\in\mathbb{Q}_{\geq0}\mid l(t)=l_{i}\right\}  \text{.}%
\]
We call $t_0, t_{1},\ldots,t_{\nu}$ the \emph{extended characteristic exponents} of
$g$.

Let $M=H\overline{\eta}^{<t_{\nu}}$ be the orbit of the truncation to
degree $<t_{\nu}$, and set
\[
\overline{m}:=\left\vert M\right\vert =l_{\nu-1}.
\]
Then
\begin{equation}
\label{equation g bar}
\overline{g}:={\displaystyle\prod\nolimits_{\rho(X)\in M}} (Y-\rho(X))\in K[X,Y]
\end{equation}
is an irreducible Weierstrass polynomial with extended characteristic exponents $t_{0},\ldots,t_{\nu-1}$.

\begin{remark}
\label{rem characteristic orders}
Characteristic exponents as introduced, for example, in \citet[Section 5.2]{JP}
are classical invariants of irreducible complex plane curve singularities. With notation
as above, if $g$ is absolutely irreducible (then $s=1$ in \eqref{eq factorization} and
$k=m$), and if we assume for simplicity that $\upsilon(\gamma)>1$, the
\emph{characteristic exponents} of $g$ are defined recursively as
\begin{align*}
k_0  &  := m,\\
k_{1}  &  :=\min\{j \mid b_{j}\neq0\text{ and }k_0\nmid j\},\\
k_{i}  &  :=\min\{j \mid b_{j}\neq0\text{, }\gcd(k_0, k_{1},\dots,k_{i-1})\nmid
j\}\text{ for } i > 1\text{, if this set is non-empty},%
\end{align*}
where $\gamma=\sum_{j > m}b_{j}X^{j/m}$. 
There are only finitely many such numbers $k_0 < k_{1} < \dots < k_{\nu}$, we have $\gcd(k_0, k_1, \dots, k_\nu) = 1$,
and the $t_{i}$ and $k_i$ are related by the following equalities:
\[
t_{i}=\frac{k_{i}}{m}, \;\text{ for }\;1 \le i \le \nu.
\]

Furthermore,  \citet[Theorem 5.2.16]{JP} yields the  valuation formula below (see also Section \ref{section:adhoc}):
\[
\upsilon_{g}(\overline{g})=\frac{k_{\nu}}{m}+\sum_{j=1}^{\nu-1}\frac{\gcd(k_0,k_1,\ldots,k_{j-1}%
)-\gcd(k_0,k_1,\ldots,k_{j})}{\gcd(k_0,k_1,\ldots,k_{\nu-1})}\frac{k_{j}}{m}.
\]

\end{remark}

We will use the following results to show the correctness of Algorithm \ref{alg:TruncateGeneral}.

\begin{lemma}
\label{lemma:pq}
Let $p, q\in K[X][Y]$ be two monic polynomials of the same degree $d$
in $Y$, where $1\leq d\leq m-1$. Suppose
that the Puiseux expansions of both $p$ and $q$ are truncations of Puiseux expansions
of $g$, and that there is a bijection between the sets of  expansions
of $p$ and $q$ such that each expansion of $p$ is a truncation of the corresponding expansion
of $q$. Then $\upsilon_g(p)  \leq \upsilon_g(q)$.
\end{lemma}

\begin{proof}
Let $p = \prod_{1 \le \ell \le d} (Y - \overline{\gamma}_{i_\ell}^{(s_\ell)})$ and $q = \prod_{1 \le \ell \le d} (Y - \overline{\gamma}_{i_\ell}^{(t_\ell)})$,
with Puiseux expansions $\gamma_{i_\ell}$ of $g$.  Let $\gamma\in\Gamma$ be an arbitrary Puiseux expansion
of $g$. Then $\upsilon(\gamma - \overline{\gamma}_{i_\ell}^{(s_\ell)}) \le \upsilon(\gamma - \overline{\gamma}_{i_\ell}^{(t_\ell)})$,
for each $1 \le \ell \le d$. Indeed, no cancellation in $\gamma - \overline{\gamma}_{i_\ell}^{(s_\ell)}$ can occur
that does not occur in $\gamma - \overline{\gamma}_{i_\ell}^{(t_\ell)}$ as well. Hence, by the valuation formula in Section
\ref{sect:max-expo}, $\upsilon_g(p) = \min_{\gamma \in \Gamma} \sum_{1 \le \ell \le d} \upsilon(\gamma - \overline{\gamma}_{i_\ell}^{(s_\ell)})
\le \min_{\gamma \in \Gamma}  \sum_{1 \le \ell \le d} \upsilon(\gamma - \overline{\gamma}_{i_\ell}^{(t_\ell)})= \upsilon_g(q)$, as claimed.
\end{proof}

\begin{remark}
\label{rem val mult}
Recall that all Puiseux expansions of $g$ are conjugate by our assumptions on $g$. Hence, if
$p \in K[X][Y]$, then $\upsilon_g(p) = \upsilon_\gamma(p)$ for each Puiseux expansion $\gamma$
of $g$.  We conclude that $\upsilon_g$ is additive in our setting here:  If $p, q \in K[X][Y]$, then
$\upsilon_g(pq) = \upsilon_g(p) + \upsilon_g(q)$ (this is not true in general since it may happen
that the valuations of $g$ at $p$ and $q$ are obtained as the valuations at expansions of $g$
in different orbits).
\end{remark}

\begin{lemma}
\label{lem poly by truncation}
For each $d = 1, \dots, m-1$, there is a monic polynomial $p_{d}\in K[X][Y]$ of degree
$d$ in $Y$ whose Puiseux expansions are all truncations of Puiseux expansions of $g$
and whose valuation at $g$ is the maximal valuation $\upsilon_{g}(q)$, for
$q\in K[[X]][Y]$ monic of degree $d$ in $Y$.
\end{lemma}

\begin{proof}
By Lemmas \ref{lemma:intA} and \ref{same exp}, for each given $d$, there is a polynomial $q_{d}\in K[X][Y]$ of degree
$d$ in $Y$ such that $\upsilon_{g}({q}_d)$ is the maximal valuation $\upsilon_{g}(q)$, for
$q\in K[[X]][Y]$ monic of degree $d$ in $Y$.  Arguing as in Section \ref{subsect: Puiseux-expansions},
since $q_{d}\in K[X][Y]$, we may group the Puiseux expansions  of $q_{d}$ into conjugacy classes
over $K(X)$ which correspond to the irreducible factors of $q_{d}$ in $K[X,Y]$.

Let $\{\chi_{1},\dots,\chi_{s}\}$ be any of these classes, and let $u = (Y-{\chi_{1}})\cdots(Y-{\chi_{s}})$
be the corresponding factor of $q_d$. Then no $\chi_{i}$ coincides with a Puiseux expansion of $g$ since otherwise
all expansions of $g$ would arise as expansions of $q_d$, a contradiction to $m>s$. The maximum degree
$t\in\mathbb{Q}_{\geq0}$ of a term of $\chi_{i}$ for which $\chi_{i}^{(t)}$ is a truncation of a Puiseux expansion of $g$ is independent
of the choice of $i$. We have $w = (Y-\overline {\chi}^{(t)}_{1})\cdots(Y-\overline{\chi}^{(t)}_{s}) \in K[X,Y]$ since the truncated
expansions are conjugate over $K(X)$ as well. Moreover, by construction, $\upsilon_g(w) \ge \upsilon_g(u)$.

Replacing $u$ by $w$ and proceeding in the same way with the other conjugacy classes of expansions of $q_d$,
we obtain a polynomial $p_{d}\in K[X][Y]$ with $\upsilon_{g}(p_d) = \upsilon_{g}(q_d)$, and such that each
Puiseux expansion of $p_d$ is a truncation of an expansion of $g$.
\end{proof}

\begin{example}
\label{example poly by truncation}
Consider the polynomial
$$g = X^{10}-2X^9-2X^8-4X^7Y-2 X^5 Y^2+X^4 Y^2+X^3 Y^2+2 X^2Y^3+Y^4\in\mathbb Q[X,Y],$$
and fix the degree $d=3$. The  Puiseux expansions of $g$ are
\begin{align*}
\gamma_{1,2} &= a_{1,2} X^{3/2} - X^2 - \frac{1}{2} a_{1,2} X^{11/2} + \dots, \\
\gamma_{3,4} &= b_{1,2} X^{5/2} - \frac{1}{4} b_{1,2} X^{9/2} + \dots,
\end{align*}
with roots $a_{1,2}$ of $Z^2 +  1$ and $b_{1,2}$ of $Z^2 - 2$.
From Lemma \ref{lemma:intA} we see that $\widetilde{p}_3 = (Y - \gamma_1)(Y - \gamma_2)(Y - \gamma_3)\in\overline{\Q} \{\{X\}\}[Y]$
is a polynomial of degree $3$ in $Y$  such that $\upsilon_{g}(\widetilde{p}_3) = 11/2$ is the maximal valuation $\upsilon_{g}(q)$, for
$q\in K[[X]][Y]$ monic of degree $3$ in $Y$.
Now note that the polynomial $q_3 = (X^6+2 X^5+X^4+2 X^3 Y+X^3+2 X^2 Y+Y^2)(Y-X^4)\in\mathbb Q[X][Y]$, whose Puiseux expansions
are $\chi_{1,2} = a_{1,2}X^{3/2}-X^2 - X^3$ and  $\chi_{3} = X^4$, satisfies $\upsilon_g(q_3) = 11/2$.
Proceeding as in the previous proof, we truncate $\chi_{1,2}$ to $\overline{\chi}_{1,2} = a_{1,2} X^{3/2}-X^2$.
Then, since no Puiseux  expansion of $g$ starts with the term $X^4$, we truncate $\chi_{3}$ to $\overline{\chi}_{3} = 0$.
In sum, $p_3 = (Y - \overline{\chi}_1) (Y - \overline{\chi}_2) (Y - \overline{\chi}_3)  = X^4 Y+X^3 Y+2 X^2 Y^2+Y^3$
is a polynomial as in Lemma \ref{lem poly by truncation}.

\end{example}

The result below is the key lemma for the recursion step of Algorithm \ref{alg:TruncateGeneral}:
\begin{lemma}
\label{lem gbar max val}The polynomial $\overline{g}\in K[X][Y]$ of degree $\overline{m}$ in $Y$ defined
in (\ref{equation g bar}) is such that $\upsilon_{g}(\overline{g})$ is the maximal valuation $\upsilon_{g}(q)$,
for $q\in K[[X]][Y]$ monic of degree $\overline{m}$ in $Y$.
\end{lemma}

\begin{proof}
Choose a monic polynomial $p=p_{\overline{m}}\in K[X][Y]$ of degree
${\overline{m}}$ in $Y$ as in Lemma ~\ref{lem poly by truncation}.
That is, the Puiseux expansions of $p$ are all truncations of Puiseux expansions of $g$,
and $\upsilon_{g}({p})$ is the maximal valuation $\upsilon_{g}(q)$, for
$q\in K[[X]][Y]$ monic of degree ${\overline{m}}$ in $Y$. We may suppose:
\begin{equation}
\label{equation:cond-proof-key-lemma}
\begin{aligned}
\text{From among all polynomials of degree $\overline{m}$ as in Lemma ~\ref{lem poly by truncation},}\\
\text{$p$ has the least number of irreducible factors in $K[X,Y]$.}  
\end{aligned}
\end{equation}
We show that $p$ is irreducible. Suppose the contrary, and let $p = p_1 \cdots p_s$ be the decomposition
of $p$ into irreducible factors $p_i\in K[X][Y]$ which are monic in $Y$, say of degrees $d_i$, where we suppose that  $d_1 \le d_2 \le \ldots \le d_s$. 
Then $\{d_1,\ldots,d_s\}\subset \{l_1,\ldots,l_{\nu-1}\}$, we have \mbox{$d_i | d_{i+1}$}, for $1 \le i < s$,  and \mbox{$d_s | \overline{m}$}.
Let $s'$ be maximal such that $d_1 = d_2 = \ldots = d_{s'}$. 
Then $s'>1$. Indeed, since both $d_{2}+\ldots +d_{s}$ and $d_1+d_{2}+\ldots +d_{s}=\overline{m}$  are divisible by $d_{2}$, also $d_1$ is divisible by $d_{2}$, so that $d_1=d_2$.
If $s'<s$, 
the same argument shows that $d_1 + \ldots + d_{s'}$ is divisible by $d_{s'+1}$.
Hence,  since $d_1 = d_2 = \ldots = d_{s'}$, there exists $1 < t \le s'$ such that $d_1  + \ldots + d_{t} = d_{s'+1}$.
If $s' = s$, let $t=s$. Then $d_1 +  \ldots + d_{t} = \overline{m}$, and we set $p_{s'+1}=\overline{g}$.
In any case, considering that the Puiseux expansions of the $p_i$ are finite, we may suppose that the largest exponent appearing in the expansions of $p_t$
is greater than or equal to the respective exponent of  each $p_i$,  $1\leq i \leq t-1$. Then the expansions of $p_1,\ldots,p_{t-1}$ arise as truncations of those of $p_t$.

Therefore, by Lemma \ref{lemma:pq},
$\upsilon_g(p_1 \cdots p_t) \leq \upsilon_g(p_t^t) \leq \upsilon_g(p_{s'+1})$ (for the second inequality note that if we write
$p_{s'+1}$ in terms of its Puiseux expansions, and truncate these expansions to the degree of the expansions of $p_t$, we get $p_t^{t}$).
It follows that  $\upsilon_g(p)\leq \upsilon_g(p_{s'+1}\cdot p_{t+1}\cdots p_{s})$, a contradiction to \eqref{equation:cond-proof-key-lemma}.
Hence $p$ is irreducible, as claimed.

Since $p$ is irreducible, and by the very definition of $\overline{g}$, the polynomials $p$ and $q=\overline{g}$ satisfy
the assumptions of Lemma \ref{lemma:pq}. So $\upsilon_g(p)\leq \upsilon_g(\overline{g})$,
which concludes the proof.
\end{proof}

We are now ready to formulate  Algorithm \ref{alg:TruncateGeneral} and proof its correctness.

\begin{algorithm}
\caption{\texttt{IntegralBasisElement}}
\label{alg:TruncateGeneral}
\begin{algorithmic}[1]
\REQUIRE $\Delta=\left\{  \delta_{1},\ldots,\delta_{m}\right\}  $, the set of singular parts of the Puiseux expansions of an irreducible
Weierstrass polynomial $g\in K[[X]][Y]$ of degree $m$; an integer $d$ with $1 \leq d \leq  m-1$.

\ENSURE $p_d\in K[X][Y]$ monic of degree $d$ in $Y$ such that $\upsilon_{g}(p)$ is
the maximal valuation $\upsilon_{g}(q)$, for $q\in K[X][Y]$ monic of degree $d$ in $Y$.

\STATE let $t=t_{\nu}$ be the maximal extended characteristic exponent of $g$

\STATE let $\rho_{1},\ldots, \rho_{\overline{m}}$ be the pairwise different
elements of $\left\{  \overline{\delta}_{1}^{<t},\ldots,\overline{\delta}%
_{m}^{<t}\right\}  $

\STATE
\label{step:u}
$u=\left\lfloor \frac{d}{\overline{m}}\right\rfloor $

\STATE
\label{step:dbar}
$\overline{d}=d-u\cdot\overline{m}$

\STATE $q=1$, $p=1$

\IF{$\overline{d}>0$}
\STATE
\label{step:recursion}
$q=\TruncatedFactor(\left\{  \rho_{1},\ldots
, \rho_{\overline{m}}\right\}  ,\overline{d})$
\ENDIF

\IF {$u>0$}
\STATE
\label{step:p}
$p={\displaystyle\prod\nolimits_{i=1}^{\overline{m}}}(Y-\rho_{i}(X))$
\ENDIF

\RETURN $p_d=p^{u}q$
\end{algorithmic}
\end{algorithm}

\begin{theorem}
Algorithm \ref{alg:TruncateGeneral} works correctly as specified.
\end{theorem}

\begin{proof}
We have to show that the polynomial $p_d$ returned by the algorithm has the desired maximal valuation at $g$ in degree $d$.
For this, we retain the notation introduced in the previous discussion. 
We write $p=\overline{g}={\displaystyle\prod\nolimits_{i=1}^{_{\overline{m}}}}(Y-\rho_{i}(X))$ and distinguish two cases.

\emph{Case 1}: Let $\overline{d}=0$. Then necessarily $u>1$ and $p_{d} = p^{u}$. By Lemma \ref{lem poly by truncation},
there is  a monic polynomial $p'_d\in K[X][Y]$ of degree $d$ in $Y$ whose Puiseux expansions are all truncations of Puiseux
expansions of $g$ and whose valuation at $g$ is maximal in degree $d$.
We have to show that $\upsilon_g(p'_d) \le \upsilon_g(p_d)$. Supposing the contrary, we may
write $p'_d$ as a product $p'_d = p^{u'}q'$, with $0 \le u' < u$ and $q' \in K[X][Y]$ monic in $Y$. Then
$\deg(q') \geq \deg(p) =\overline{m}$. We may assume that $p^{u'}$ is the maximal power of $p$ appearing as a factor of a
monic polynomial  $q_d\in K[X][Y]$ of degree $d$ in $Y$ with $\upsilon_g(q_d)>\upsilon_g(p_d)$.
To get a contradiction, it suffices to show that $q'$ has a monic factor $p' \in K[X][Y]$ of degree
$\deg(p)$ in $Y$. Indeed, Lemma \ref{lem gbar max val} then gives $\upsilon_g(p) \ge \upsilon_g(p')$
and, thus,  $\upsilon_g(p^{u'+1} (q'/p')) \ge \upsilon_g(p_d')$,  a contradiction to the maximality assumption on $u'$.

To show that a factor $p'$ of $q'$ as desired exists, let $q' = q_1 \cdots q_s$ be the decomposition of $q'$ into
irreducible factors $q_i\in K[X][Y]$, all monic in $Y$, say of degrees $d_i$, where we suppose that  $d_1 \le d_2 \le \ldots \le d_s$.
Due to our assumption on the Puiseux expansions of $p'_d$, it follows as in the proof of Lemma \ref{lem gbar max val}
that \mbox{$d_i | d_{i+1}$}, for $1 \le i < s$,  and that \mbox{$d_s | \deg(p)$}.
Then \mbox{$d_s | (\deg(p) - d_s)$} and \mbox{$d_j  | (\deg(p) - (d_{j+1} + \ldots + d_{s-1} +d_s ))$}, for $1 \le j < s$.
Hence, if $\deg(p) - (d_{j+1} + \ldots + d_{s-1} +d_s ) > 0$, then $d_j \le \deg(p) - (d_{j+1} + \ldots + d_{s-1} +d_s )$.
Equivalently, $d_{j+1} + \ldots + d_{s-1} +d_s  < \deg(p)$ implies $d_j +d_{j+1} + \ldots + d_{s-1} +d_s \le \deg(p)$.
Therefore, since $d_{1} + \ldots + d_{s-1} +d_s  =\deg(q')\ge \deg(p)$, there exists $1 \le j \le s$ such that $\deg(p) =
d_j +d_{j+1} + \ldots + d_{s-1} +d_s$,  and we may take $p'={\displaystyle\prod\nolimits_{i=j}^s q_i}$.

\emph{Case 2}: Let $\overline{d}>0$. Then the algorithm executes the recursive call in step \ref{step:recursion}. With each
such call, the number $\nu$ of extended characteristic exponents of $g$ decreases by one. The recursion
continues until $\overline{d}$ is zero, so it stops at latest when $\nu$ is 1. Taking case 1 and
Lemma \ref{lem:help-alg6}   below into account,  we may, thus,  assume that the valuation of
the resulting polynomial
$$
q=\TruncatedFactor(\left\{  \rho_{1},\ldots,\rho_{\overline{m}}\right\}  ,\overline{d}).
$$
at $g$ is maximal in degree $\overline d$.  To conclude, we consider two cases.
If $u=0$, then $d=\overline{d}$ and $p_d=q$, so we are done.
Let $u>0$. Since $p_d=p^uq$, the same argument as in case 1 shows that there is a polynomial of type
$p'_d=p^u q'$, with $q' \in K[X][Y]$ monic in $Y$ of degree $\deg(q')=\deg(q)$, and such that the valuation
of $p'_d$ at $g$ is maximal in degree $d$. But then $\upsilon_g(q') \le \upsilon_g(q)$, hence
$\upsilon_g(p_d')=\upsilon_g(p^u q') \le \upsilon_g(p^u q)=\upsilon_g(p_d)$ by Remark
\ref{rem val mult}. \mbox{So we are done again.}
\end{proof}

\begin{lemma}
\label{lem:help-alg6}
With ${\overline{d}}$ as in step \ref{step:dbar} of Algorithm \ref{alg:TruncateGeneral}, let $q \in K[X][Y]$ be a monic
polynomial of degree ${\overline{d}}$ in $Y$ such that $\upsilon_{\overline{g}} (q)$ is the maximal valuation
at $\overline{g}$ in degree $\overline{d}$. Then $\upsilon_{g} (q)$ is
the maximal valuation at $g$ in degree $\overline{d}$.
\end{lemma}

\begin{proof} 
Let $q' \in K[X][Y]$ be any monic polynomial of degree $\overline{d}$ in $Y$. Then no Puiseux
expansion of $\overline{g}$ coincides with the initial part of a Puiseux expansion of $q'$
since $\overline{d} \le \overline{m} - 1$. Hence, taking into account that each expansion
of $\overline{g}$ is the truncation of an expansion of $g$, it easily follows from the valuation
formula in Section \ref{sect:max-expo} that $\upsilon_{\overline{g}}(q') = \upsilon_{g}(q')$
(in particular, $\upsilon_{\overline{g}}(q) = \upsilon_{g}(q)$). We conclude that
$\upsilon_{g}(q') = \upsilon_{\overline{g}}(q') \le \upsilon_{\overline{g}}(q) = \upsilon_{g}(q)$,
as desired.
\end{proof}

\begin{example}
\label{exa:conj-classes}
The polynomial $g$ considered in Example \ref{examplePuiseux} has degree $m=8$ in $Y$.
We apply Algorithm~\ref{alg:TruncateGeneral}  to the set of Puiseux expansions of $g$,
where $d = m-1 = 7$ is chosen maximal.
The singular parts of the Puiseux expansions of $g$ are
\begin{align*}
\delta_{1}  &  =iX^{3/2}+(-1/2i-1/2)X^{7/4}+1/4iX^{2},\\
\delta_{2}  &  =iX^{3/2}+(-1/2i-1/2)X^{7/4}-1/4iX^{2},\\
\delta_{3}  &  =iX^{3/2}+(1/2i+1/2)X^{7/4}+1/4iX^{2},\\
\delta_{4}  &  =iX^{3/2}+(1/2i+1/2)X^{7/4}-1/4iX^{2},\\
\delta_{5}  &  =-iX^{3/2}+(1/2i-1/2)X^{7/4}+1/4iX^{2},\\
\delta_{6}  &  =-iX^{3/2}+(1/2i-1/2)X^{7/4}-1/4iX^{2},\\
\delta_{7}  &  =-iX^{3/2}+(-1/2i+1/2)X^{7/4}+1/4iX^{2},\\
\delta_{8}  &  =-iX^{3/2}+(-1/2i+1/2)X^{7/4}-1/4iX^{2},%
\end{align*}
where $i^{2}=-1$. Truncating the $\delta_{i}$ to degree $<t=t_3=2$,  we obtain
\begin{align*}
\overline{\delta}_{1} &  = \overline{\delta}_{2}=iX^{3/2}%
+(-1/2i-1/2)X^{7/4},\\
\overline{\delta}_{3}  &  = \overline{\delta}_{4}=iX^{3/2}%
+(1/2i+1/2)X^{7/4},\\
\overline{\delta}_{5} &  = \overline{\delta}_{6}=-iX^{3/2}%
+(1/2i-1/2)X^{7/4},\\
\overline{\delta}_{7}  &  = \overline{\delta}_{8}=-iX^{3/2}%
+(-1/2i+1/2)X^{7/4}.%
\end{align*}
Hence, in step \ref{step:u} of Algorithm \ref{alg:TruncateGeneral}, we get
$u_{1}=u=1$.  Denoting the polynomial generated in step \ref{step:p} of the
algorithm by $p_{\overline{m}}$, we have
\begin{align*}
p_{4}  &  =(Y-\overline{\delta}_{1})(Y-\overline{\delta}_{3})(Y-\overline{\delta}_{5})(Y-\overline{\delta}_{7})\\
&  =Y^{4}+2X^{3}Y^{2}+2X^{5}Y+X^{6}+1/4X^{7}.
\end{align*}
Applying the whole procedure recursively gives $p_{2}
=Y^{2}+X^{3}$, $u_{2} = 1$ and $p_{1} = Y$, $u_{3} = 1$. Combining all
factors, we get
\[
p_7=p_{4}^{u_{1}}p_{2}^{u_{2}}p_{1}^{u_{3}}=\left(  Y^{4}+2X^{3}%
Y^{2}+2X^{5}Y+X^{6}+\frac{1}{4}X^{7}\right)  (Y^{2}+X^{3})Y.
\]

\end{example}

If $p_{d} = \TruncatedFactor(\Gamma, d)$ is a polynomial returned by Algorithm
\ref{alg:TruncateGeneral}, we may find the exponent of $x$ in the denominator
of the corresponding integral basis element via the formula
\[
o(\Gamma, d) = \upsilon_g(p_d)=\sum_{\eta\in N_d} \upsilon(\gamma- \eta).
\]
Here, $N_d = \{\eta_{1}, \dots\eta_{d}\}$ is the set of Puiseux expansions of $p_{d}$
and $\gamma\in\Gamma$ is any of the Puiseux expansions of $g$ (recall that
the latter expansions are all conjugate).

\begin{example}
Continuing Example \ref{exa:conj-classes}, we now compute all
elements of the integral basis.

We first use Algorithm \ref{alg:TruncateGeneral} to find all numerators $p_d$. We already
know that $p_{7} = p_{4} p_{2} p_{1}$. Computing the $p_d$ for $1 \le d \le6$ then amounts
to the following: Start with the largest possible power of $p_{4}$ whose degree in $Y$ is
$\leq d$. Then successively address powers of $p_{2}$ and $p_{1}$ in the obvious way.  This
yields $p_{6} = p_{4} p_{2}$, $p_{5} = p_{4} p_{1}$, $p_4$, $p_{3} = p_{2}p_{1}$, $p_{2}$, and $p_{1}$.

We now compute the powers of $x$ in the denominators via the formula for the $o(\Gamma, d)$
above. By construction,  for any $\gamma\in\Gamma$, we have $\sum_{\eta\in N_{p_{4}}} \upsilon(\gamma- \eta) =
27/4$, $\sum_{\eta\in N_{p_{2}}} \upsilon(\gamma- \eta) = 13/4$, and $\sum_{\eta\in
N_{p_{1}}} \upsilon(\gamma- \eta) = 3/2$. We conclude that
$o(\Gamma, 1) = 3/2$, $o(\Gamma, 2) = 13/4$, $o(\Gamma, 3) = 13/4 +
3/2 = 19/4$, $o(\Gamma, 4) = 27/4$, $o(\Gamma, 5) = 27/4 + 3/2 = 33/4$,
$o(\Gamma, 6) = 27/4 + 13/4 = 10$ and $o(\Gamma, 7) = 27/4 + 13/4 + 3/2
= 23/2$. Hence, the desired integral basis for $g$ is
\[
\left\{ 1=p_0,  \frac{p_{1}}{x}, \frac{p_{2}}{x^{3}}, \frac{p_{2} p_{1}}{x^{4}},
\frac{p_{4}}{x^{6}}, \frac{p_{4} p_{1}}{x^{8}}, \frac{p_{4} p_{2}}{x^{10}},
\frac{p_{4} p_{2} p_{1}}{x^{11}}\right\}  .
\]

\end{example}

\subsection{Merging the Integral Bases for the Branches}

We now know how to find integral bases for the branches of $f$. The next step is to
combine the individual bases to an integral basis for the product $\widetilde{f}
= f_1 \cdots f_r$ of the branches. In principle, this could be done by applying
the splitting of normalization as in Proposition \ref{coro complete split}.
However, as already discussed in Section \ref{section:summary},
this would involve the use of the extended Euclidean algorithm for finding
the respective B\'ezout coefficients and is not very practical.
Our next result, which extends Proposition \ref{coro complete split}, provides
a different strategy, replacing the B\'ezout coefficients
by polynomials in $K[X, Y]$ which are both simpler and easier to calculate.

\begin{proposition}
\label{prop local int basis modified} Write $f=f_{0} \widetilde{f} =f_{0} f_{1}\cdots f_{r}$ as before.
For $i=1,\ldots,r$, set $h_{i}=\prod_{j=1\text{, }j\neq i}^{r}f_{j}$, and let
\[
{\mathcal {B}}^{(i)}=\left\{  1=p_{0}^{(i)},\frac{p_{1}^{(i)}}{X^{e_{1}^{(i)}}}%
,\ldots,\frac{p_{m_{i}-1}^{(i)}}{X^{e_{m_{i}-1}^{(i)}}}\right\}
\]
represent an integral basis for $f_{i}$ as in Proposition \ref{prop loc int bas shape}. Furthermore,
for each $i$, let $\beta_{i}\in K[X,Y]$ be a polynomial such that $\upsilon_{f_i}(\beta_{i}h_{i})$ is
an integer $c_i \ge 0$, and set
\[
\widetilde{\mathcal {B}}^{(i)}=\left\{  \frac{\beta_{i}h_{i}}{X^{c_{i}}},\frac{\beta_{i}h_{i}%
p_{1}^{(i)}}{X^{c_{i}+e_{1}^{(i)}}},\ldots,\frac{\beta_{i}h_{i}p_{m_{i}%
-1}^{(i)}}{X^{c_{i}+e_{m_{i}-1}^{(i)}}}\right\}.
\]
Then $\widetilde{\mathcal{B}}^{(1)}\cup\ldots\cup \widetilde{\mathcal{B}}^{(r)}$ represents an integral basis
for $\widetilde{f} =f_{1}\cdots f_{r}$.
\end{proposition}

\begin{proof}  For each integer $i$ with $1\leq i\leq r$, $f_{i}$ and $\beta_{i} h_{i}$
are coprime in $K((X))[Y]$ since otherwise we would have $\upsilon_{f_i}(\beta_{i}h_{i})=\infty$.
Hence, there are polynomials $a_i, b_i \in K[[X]][Y]$ and an integer $\widetilde{e}_{i}\in\mathbb N$
which fit into a B\'ezout identity  of type $a_i f_{i} + b_i\beta_{i} h_{i} = X^{\widetilde{e}_{i}}$.
Then $\widetilde{e}_{i} = {e}_{i} + c_{i}$, where ${e}_{i} := \upsilon_{f_i}(b_{i})\in\N$.
We conclude from Theorem \ref{theorem:integrality local} that both  $g_i:=b_i/ X^{{e}_{i}}$ and $\beta_{i} h_{i} / X^{c_{i}}$
represent elements which are integral over $K[[X]][Y]/\langle f_{i}\rangle$. Moreover, $g_{i}\frac{\beta_{i}h_{i}}{X^{c_{i}}}\equiv
\delta_{ij} \!\mod f_{j}$, for $1 \le j \le r$.
Hence, as in Proposition \ref{prop split}, the well-defined map of $K[[X]]$-modules
\[
(t_{1}\!\!\!\mod f_{1},\dots,t_{r}\!\!\!\mod f_{r})\mapsto\sum_{i=1}^{r}%
g_{i}\frac{\beta_{i}h_{i}}{X^{c_{i}}}t_{i}\!\!\!\mod f_{1} \dots f_{r}
\]
maps $\bigoplus_{i=1}^{r}\overline{K[[X]][Y]/\!\left\langle f_{i}\right\rangle
}$ isomorphically onto $\overline{K[[X]][Y]/\!\left\langle f_{1}\cdots
f_{r}\right\rangle }$. So if we set
\[
\mathcal{C}^{(i)}=\left\{  g_{i}\frac{\beta_{i}h_{i}}{X^{c_{i}}},g_{i}\frac{\beta
_{i}h_{i}p_{1}^{(i)}}{X^{c_{i}+e_{1}^{(i)}}},\ldots,g_{i}\frac{\beta_{i}%
h_{i}p_{m_{i}-1}^{(i)}}{X^{c_{i}+e_{m_{i}-1}^{(i)}}}\right\}
\]
for $1 \le i \le r$, then $\mathcal{C}^{(1)}\cup\ldots\cup \mathcal{C}^{(r)}$ represents an integral basis
for $f_{1}\cdots f_{r}$.

Now, for each $i$, it is easy to see that $\mathcal{C}^{(i)}$ and $\mathcal{B}^{(i)}$, as well as
$\mathcal {B}^{(i)}$ and $\widetilde{\mathcal {B}}^{(i)}$, represent the same $K[[x]]$-submodule
of $K[[x]][y]=K[[X]][Y]/\langle f_i \rangle$: Use that $g_{i}\frac{\beta_{i}h_{i}}{X^{c_{i}}}
\equiv 1 \!\mod f_{i}$ and that $g_i(x,y)$ is integral over $K[[x]][y]$.
Since in addition $\mathcal{C}^{(i)}$ and $\widetilde{\mathcal {B}}^{(i)}$ reduce to zero modulo $f_j$
for $j\neq i$, we see that $\mathcal{C}^{(i)}$ and $\widetilde{\mathcal {B}}^{(i)}$
represent the same $K[[x]]$-submodule of $K[[x]][y]=K[[X]][Y]/\langle
f_1\cdots f_r \rangle$, which concludes the proof.
\end{proof}

To find pairs $(\beta_i, c_i)$ as in the proposition, it is enough to compute
the singular parts of the Puiseux expansions of $f$. In fact,
if $\upsilon_{f_i}(h_{i})\in\mathbb N$, set $\beta_{i} =1$. Otherwise,
set $\overline{h}_{i} = \prod_{\delta\in\Delta^{(i)}}(Y-\delta)$, where $\Delta^{(i)}$ is the set
of singular parts of the Puiseux expansions of $h_{i}$. Then an appropriate power
of $\overline{h}_{i}$ will do. Indeed, $\upsilon_{f_i}({h}_{i} ^{\ell}) = \ell\cdot
\upsilon_{f_i} ({h}_{i} )$ and $\upsilon_{f_i}({h}_{i})=\upsilon_{f_i}(\overline{h}_{i})$.
Typically, however, it is more efficient to  apply Algorithm \ref{alg:mergecoef}
below. Examples \ref{exampleTwoBranches-2} and \ref{example:lin-congr-equ}
show the algorithm at work.

\begin{algorithm}[t]                      
\caption{\texttt{CoefficientsForMerging}}          
\label{alg:mergecoef}
\begin{algorithmic}[1]

\REQUIRE $\Delta_{1}, \dots, \Delta_{r}$, the sets of singular parts of the Puiseux expansions
of the branches $f_{1}, \dots, f_{r}$ of an irreducible polynomial $f \in K[X][Y]$ which is monic in $Y$.
\ENSURE  A set $\{(\beta_{i}, c_i)\}_{1 \le i \le r}$ of pairs $(\beta_{i}, c_i)\in K[X,Y]\times \mathbb N$
with $\upsilon_{f_i}(\beta_{i}h_{i})=c_i$, where  $h_i = \prod_{j \neq i} f_j$.
\FOR{$i = 1, \dots, r$}
\STATE from the given singular parts, compute $\upsilon_{f_i}(h_{i})$
\IF{$\upsilon_{f_i}(h_{i}) \in \mathbb N$}
\STATE $\beta_{i} = 1$, $c_i = \upsilon_{f_i}(h_{i})$
\ELSE
\STATE
\label{alg7:step5}
for $1 \le j \le r$, $j \ne i$, and $1 \le k < \deg_Y(f_{j})$, set
$
f_{j,k} = \TruncatedFactor(\Delta_{j}, k),
$
\\
$
f_{j, \deg_Y(f_{j})} = \prod_{\delta\in\Delta_{j}} (Y-\delta)$
\STATE
\label{alg7:step7}
for each prime divisor $a$ of the denominator of $\upsilon_{f_i}(h_{i})$, choose a
polynomial\footnotemark \ from the $f_{j,k}$ whose valuation at ${f_i}$ has a multiple of $a$ as its
denominator, and whose $Y$-degree is minimal among  the $f_{j,k}$ with this
property
\STATE
set up a linear congruence equation to find for each $f_{j,k}$ selected in step \ref{alg7:step7}
an exponent $\ell_{j,k}$ such that the product $\beta_{i}$ of the powers $f_{j,k}^{\ell_{j,k}}$
satisfies $\upsilon_{f_i}(\beta_{i})  \in \mathbb N$; choose a solution of the linear congruence
equation which minimizes the $Y$-degree of $\beta_i$
\STATE $c_i = \upsilon_{f_i}(\beta_i h_i)$
\ENDIF
\ENDFOR
\RETURN $\{(\beta_{i}, c_i)\}_{1 \le i \le r}$
\end{algorithmic}
\end{algorithm}

\footnotetext{Polynomials $f_{j,k}$ as desired exist since the polynomial
$\overline{h}_{i}=\prod_{\delta\in\Delta^{(i)}}(Y-\delta)$
considered in the text is a factor of the product of all the $f_{j,k}$.}

\begin{example}
\label{exampleTwoBranches-2} Let $f = (Y^{3}+X^{2})(Y^{2}-X^{3})+Y^{6}\in\mathbb Q[X,Y]$ be as in Examples
\ref{exampleTwoBranches-int} and \ref{exampleTwoBranches}, with branches
$f_{1} = (Y -\gamma_{1})(Y - \gamma_{2})(Y - \gamma_{3})$ and $f_{2} = (Y - \gamma_{4})(Y -\gamma_{5})$.
We apply Algorithm~\ref{alg:mergecoef} to compute the polynomial $\beta_1$ and the set
$\widetilde{\mathcal{B}}^{(1)}$ from Proposition \ref{prop local int basis modified}.

In Example \ref{exampleTwoBranches}, we found
the integral basis $\mathcal{B}^{(1)}=\{1,y,\frac{y^{2}}{x}\}$ for $f_{1}$. Considering $h_1=f_2$, we
see from the initial terms of the $\gamma_i$ as written in Example \ref{exampleTwoBranches-int}
that $\upsilon_{f_{1}}(h_{1}) = 4/3$. In step \ref{alg7:step5} of the algorithm, we get
\[
f_{2,1}=\TruncatedFactor(h_{1}, 1) = Y.
\]
Since $\upsilon_{f_{1}}(Y) = 2/3$, we have
$\upsilon_{f_{1}}(Yf_1) = 2\in\mathbb N$. So we can take $\beta_{1} = Y$ and, thus,
\[
\widetilde{\mathcal{B}}^{(1)} = \left\{ \frac{Y h_{1}}{X^{2}}, \frac{Y h_{1}
Y}{X^{2}}, \frac{Y h_{1} Y^{2}}{X^{3}} \right\}.
\]
Note that this set is simpler than the set $\widetilde{\mathcal{B}}^{(1)} $ obtained in Example \ref{exampleTwoBranches}.
\end{example}

Here is a slightly more complicated example:

\begin{example}
\label{example:lin-congr-equ}
Let $f(X, Y) = (Y^{6} - 6X^{3}Y^{4} - 2X^{7}Y^{3} + 12X^{6}Y^{2} - 12X^{10}Y -
8X^{9})(Y^{2}-2YX^{3}-2X^{3})(Y^{2}+X^{7}) + X^{30}\in\mathbb Q[X,Y]$. The Puiseux expansions of $f$ are
\begin{equation*}
 \begin{aligned}[t]
\gamma_1 &= a_1 X^{3/2} + X^{7/3} + \dots, \\
\gamma_2 &= a_1 X^{3/2} + b_1 X^{7/3} + \dots, \\
\gamma_3 &= a_1 X^{3/2} + b_2 X^{7/3} + \dots, \\
\gamma_4 &= a_2 X^{3/2} + X^{7/3} + \dots, \\
\gamma_5 &= a_2 X^{3/2} + b_1 X^{7/3} + \dots,  \\
\end{aligned}
\quad\quad
\begin{aligned}[t]
\gamma_6 &= a_2 X^{3/2} + b_2 X^{7/3} + \dots, \\
\gamma_7 &= a_1 X^{3/2} + X^3 + \dots, \\
\gamma_8 &= a_2 X^{3/2} + X^3 + \dots, \\
\gamma_9 &= c_1 X^{7/2} + \dots, \\
\gamma_{10} &= c_2 X^{7/2} + \dots,
\end{aligned}
\end{equation*}

\noindent with roots $a_{1}, a_{2}$ of $Z^{2} - 2$, $b_{1}, b_{2}$ of $Z^{2} + Z + 1$, and $c_{1}, c_{2}$ of $Z^{2} + 1$.
Corresponding to the conjugacy classes $\Delta_{1} = \{\gamma_{1}, \gamma_{2}\, \gamma_{3}, \gamma_{4},
\gamma_{5}, \gamma_{6}\}$, $\Delta_{2} = \{\gamma_{7}, \gamma_{8}\}$, and $\Delta_{3} =\{\gamma_{9}, \gamma_{10}\}$,
there are three branches $f_{1},f_{2}$, and $f_{3}$ of $f$. We show how to compute the polynomial $\beta_{1}$
from Proposition \ref{prop local int basis modified}.

Let $h_{1} = f_{2} f_{3}$. Then $\upsilon_{f_{1}}(h_{1})=41/6$. In step \ref{alg7:step5} of Algorithm \ref{alg:mergecoef},
we get $f_{2,1}= Y$, $f_{2,2}= Y^{2}-2YX^{3}-2X^{3}$, and $f_{3,1}= Y$, $f_{3,2}= Y^{2}+X^{7}$. Furthermore, we have
$\upsilon_{f_{1}}(y) = 3/2$, $\upsilon_{f_{1}}(y^{2}-2yx^{3}-2x^{3}) = 23/6$, and $\upsilon_{f_{1}}(y^{2}+x^{7}) = 3$.
So we can take a polynomial of type $\beta_{1} = y^{\ell_{1}}(y^{2} -2yx^{3} - 2x^{3})^{\ell_{2}}$,  with exponents
$\ell_{1}, \ell_{2} \in{\mathbb{N}}$ such that $\ell_{1} \frac{3}{2} + \ell_{2} \frac{23}{6} + \frac{41}{6} \in{\mathbb{Z}}$.
The corresponding linear congruence equation is $9 \ell_1 + 23 \ell_2 + 41 \equiv 0 \mod 6$. Choosing the solution
which minimizes the $Y$-degree of $\beta_1$, we get $\ell_{1} = 1$, $\ell_{2} = 2$ and, thus,
$\beta_{1} = y (y^{2}-2yx^{3}-2x^{3})^{2}$.
\end{example}
Merging the integral bases for the branches using Proposition \ref{prop local int basis modified}
requires that we know the precision $t$ up to which all power series in $X$ appearing in the
process must be developed. Taking Remark \ref{ypowers} into account, we can use $t=e_c + E(f)$,
where $e_c = \max_{1 \le i \le r} c_i$. Indeed, the integrality exponent of any polynomial appearing
in the construction will be at most this number. We obtain Algorithm \ref{alg:loclocbasis} below.

\begin{algorithm}                      
\caption{\texttt{MergingIntegralBases}}        
\label{alg:loclocbasis}
\begin{algorithmic}[1]
\REQUIRE  Lists of pairs $\{(\Delta_i, {f}_i)\}_{1 \le i \le r}$, $\{(\beta_{i}, c_i)\}_{1 \le i \le r}$,
where
\begin{itemize}
\item $\Delta_{1}, \dots, \Delta_{r}$ are the sets of singular parts of the Puiseux expansions
of the branches $f_{1}, \dots, f_{r}$ of an irreducible polynomial $f \in K[X][Y]$ which is monic in $Y$,
\item \{$(\beta_{i}, c_i)\}_{1 \le i \le r}$, the output of $\MergeCoefficients(\Delta_{1}, \dots, \Delta_{r})$
\item the $f_i$ are developed up to $X$-degree $e_c + E(f)$, where $e_c = \max_{1 \le i \le r} c_i$.
\end{itemize}
\ENSURE A list $\{(p_{i}, e_i)\}_{1 \le i \le r}$ of pairs $(p_i, e_i)\in K[X,Y]\times \mathbb N$  such that
$\left\{1=p_0, \frac{p_{1}}{x^{e_{1}}}\dots, \frac{p_{m-1}}{x^{e_{m-1}}}\right\}$ is an integral basis for $\widetilde{f}=f_1 \cdots f_r$
of monic triangular type.
\STATE $m = \deg_Y(f_1 \cdots f_r)$
\FOR{$i = 1, \dots, r$}
\STATE $h_i = \prod_{j \neq i} f_j$
\FOR{$d = 0, \dots, m_i-1$}
\STATE $q_d = \TruncatedFactor(\Delta_i, d)$
\STATE $p_{d} = b_i h_i q_d$
\STATE $e_{d} = c_i + e_{f_i}(q_d)$
\ENDFOR
\STATE $\mathcal B^{(i)} = \left\{ \frac{p_{0}}{x^{e_{0}}}, \frac{p_{1}}{x^{e_{1}}},
\dots, \frac{p_{m_i-1}}{x^{e_{m_i-1}}}\right\}$
\ENDFOR
\STATE \label{step int bas} applying the recipe from Remark \ref{rem: HN-mts} to
$\mathcal{B}^
{(1)} \cup \ldots \cup \mathcal{B}^{(r)}$, compute an integral basis
$\left\{1=p_0, \frac{p_{1}}{x^{e_{1}}}\dots, \frac{p_{m-1}}{x^{e_{m-1}}}\right\}$
of monic triangular type for $f_1 \cdots f_r$
\RETURN $\{(p_{i}, e_i)\}_{1 \le i \le r}$
\end{algorithmic}
\end{algorithm}

\begin{remark}
Note that if $e=\upsilon_{\widetilde{f}}\;\!(y)$, then the fractions $y^{i} / x^{\lfloor ei \rfloor}$, $0 \le i < m = \deg_{Y}(\widetilde{f})$,
are integral over $K[[x]][y] = K[[X]][Y]/\langle \widetilde{f}\rangle$.
To enhance the performance of Algorithm \ref{alg:loclocbasis}, we propose to add these elements to
$\mathcal{B}^{(1)} \cup \dots \cup \mathcal{B}^{(r)}$ before applying Remark \ref{rem: HN-mts}.
\end{remark}

\subsection{Ad-hoc Formulas for Absolutely Irreducible Weierstrass Polynomials}
\label{section:adhoc}
It turns out that in the case where $g\in K[[X]][Y]$ is an absolutely irreducible Weierstrass
polynomial, we can get simple formulas for computing valuations and, thus,
maximal integrality exponents. In particular, if $g$ is absolutely irreducible and
there is only one characteristic exponent, then these formulas allow us
to write down an integral basis for $g$ directly, without using Algorithm
\ref{alg:TruncateGeneral}, $\TruncatedFactor$. We need:

\begin{lemma}
\label{lemma:valuation-special-case}
Let $g\in K[[X]][Y]$ be an absolutely irreducible Weierstrass polynomial of degree
$m$ in $Y$. Factorize $g$ as
$$
g=\prod (Y-\gamma_{\ell}(X)) := \prod (Y-\eta (\omega^\ell X^{1/m}))
$$
as in Section \ref{subsect: Puiseux-expansions}, where $\eta\in L[[T]]$, and $\omega$
is a primitive $m$th root of unity. Set $\gamma=\gamma_m$. As in Remark~\ref{rem characteristic orders},
assume that $\upsilon(\gamma)>1$, and let $k_{0}, \ldots,k_{\nu}$ be the characteristic exponents of $g$.
Then, with notation as in Section \ref{sect:max-expo}, we have:

\begin{enumerate}
\item For $j=1,\ldots,\nu$, denote by $N_{j}$ the set of all $i\in\{1,\ldots,m-1\}$
such that%
\[
\frac{k_{0}}{\gcd(k_{0},\ldots,k_{j-1})}\mid i\text{\hspace{5mm}%
and\hspace{5mm}}\frac{k_{0}}{\gcd(k_{0},\ldots,k_{j})}\nmid i\text{.}%
\]
Then%
\[
\upsilon(\gamma-\gamma_i)=\frac{k_{j}}{m} \ \text{ for all } \ i \in N_{j}.%
\]
In particular, if $\nu=1$, then for $i \neq m$,%
\[
\upsilon(\gamma-\gamma_i)=\frac{k_{1}}{m}.%
\]
\item For all $i$, we have
\begin{align*}
\upsilon_\gamma\left(\frac{\partial g}{\partial Y}\right) &  =\sum
_{j=1}^{\nu}\left(  \gcd(k_{0},\ldots,k_{j-1})-\gcd(k_{0},\ldots,k_{j})\right)
\frac{k_{j}}{m}\\
&  =\Int_i 
\end{align*}
\end{enumerate}
\end{lemma}
\begin{proof}
See \citet[Lemma 5.2.18(1)]{JP} and its proof.
\end{proof}

\begin{proposition}
\label{prop integral} With notation and assumptions as in Lemma \ref{lemma:valuation-special-case}, set
$$
e_d=\left\lfloor \upsilon_\gamma\left(\frac{\partial^{m-d}g}{\partial Y^{m-d}%
}\right)\right\rfloor, \ \text{ for } \ d=1,\dots, m-1.
$$
Then we have:
\begin{enumerate}
[leftmargin=0.9cm]

\item
\label{step integral e m-1}
The element $\frac{\frac{\partial g}{\partial Y}%
}{X^{e_{m-1}}}$ is integral over $K[[X]][Y]/\langle g \rangle$, and
$e_{m-1}$ is the maximal integrality exponent with respect
to $g$ in degree $m-1$.

\item
\label{step integral e 1}
The element $\frac{\frac{\partial^{m-1}g}{\partial Y^{m-1}}}{X^{e_1}}$ is integral over
$K[[X]][Y]/\langle g \rangle$, $e_1=\left\lfloor \frac{k_{1}}{m}\right\rfloor $, and
this number is the maximal integrality exponent with respect to $g$ in degree $1$.

\item
\label{step integral nu 1}
If $\nu=1$, then $e_d = \left\lfloor \frac{dk_1}{m}\right\rfloor$, $1 \le d \le m-1$, and
\[
\left\{
1,\frac{\frac{\partial^{m-1}g}{\partial Y^{m-1}}}{X^{e_{1}}},\ldots
,\frac{\frac{\partial g}{\partial Y}}{X^{e_{m-1}}}%
\right\}
\]
represents an integral basis for $g$.
\end{enumerate}
\end{proposition}

\begin{proof}
As in Lemma \ref{lemma:intA},  for each $1 \le d \le m-1$, choose a subset
$\widetilde{\mathcal{A}}\subseteq\{1,\ldots, m-1\}$ of cardinality $d$  with
$\Int(\widetilde{\mathcal{A}})$ maximal among all $\Int({\mathcal{A}})$,
${\mathcal{A}}\subseteq\{1,\ldots, m-1\}$ of cardinality $d$, and set
\[
\widetilde{p}_d:=%
{\displaystyle\prod\limits_{j\in\widetilde{\mathcal{A}}}}
\left(  Y-\gamma_j(X)\right) \in{\mathcal{P}_{X}}[Y].
\]
Then $\upsilon_{g}(\widetilde{p}_d)=\Int({\widetilde{\mathcal{A}}})$, and this
number is the maximal valuation $\upsilon_{g}(q)$, for $q\in L\{\{X\}\}[Y]$
monic of degree $d$ in $Y$.
Hence, items (\ref{step integral e m-1}), (\ref{step integral e 1}) and the first part of (\ref{step integral nu 1}) follow from the previous lemma,
computing the derivatives of $g=\prod_{i=1}^m (Y-\gamma_{i}(X))$ by the product rule and evaluating the result at $Y = \gamma(X)$.
The second part of (\ref{step integral nu 1}) follows then from Proposition \ref{prop loc int bas shape}.
\end{proof}

\begin{example}
The Weierstrass polynomial $g=Y^{4}-2X^{3}Y^{2}-4X^{11}Y+X^{6}-X^{19} \in \CC[[X]][Y]$ is (absolutely)
irreducible. It factors as
$$
g= \prod (Y-\eta (\omega^\ell X^{1/4})),
$$
where $\eta(T)=T^{6}+T^{19}\in \CC[[T]]$, and $\omega=e^{\pi i/2}$. We compute that
$\upsilon_{g}\left(\frac{\partial^{2}g}{\partial Y^{2}}\right)=3$, but for
\[
\widetilde{p}_2=\left(  Y-\eta(X^{1/4})\right)  \left(  Y-\eta(i X^{1/4})\right),
\]
we have $\upsilon_{g}(\widetilde{p}_2) =\frac{25}{4}$. So here $\nu=2$, and
the  assertion of Proposition \ref{prop integral}, $(3)$ does not hold.
\end{example}

\subsection{Computing  Minimal Local Contributions}
\label{subsect:local-contr}

Summing up, we now present our algorithm for finding minimal local contributions.
For this, as we already know, we may restrict ourselves to the case of a $K$-rational
singularity, which may be chosen to be the origin. That is, we consider the prime
ideal $P=\langle x,y\rangle \in\Sing(A)$. If we assume in addition
that the origin is the only singularity at $X = 0$, we may apply  the recipe given
in Proposition \ref{completion basis}, which allows us to compute the minimal
contribution from an integral basis of monic triangular type for $f_1 \cdots f_r$
(take Proposition \ref{prop completion to localization} into account). We describe
the resulting procedure in Algorithm \ref{alg:HenselBasis}.

\begin{algorithm}                      
\caption{\texttt{MinimalLocalContribution}}        
\label{alg:HenselBasis}
\begin{algorithmic}[1]
\REQUIRE $f \in K[X][Y]$ irreducible and monic in $Y$ of degree $n$, with
$P = \langle X, Y \rangle\in\Sing(A)$, where $ A=K[x,y] =K[X,Y]/\langle f(X,Y)\rangle$,
and such that $P$ is the only singularity of $A$ at $X=0$.
\ENSURE A set of $K[x]$-module generators for the
minimal local contribution to $\overline{A}$ at $P$.
\STATE $\Delta_0 = $  set of singular parts of the Puiseux expansions of $f$ not vanishing at the origin,
            $\Delta_{1}, \dots, \Delta_{r} = $ sets of singular parts of the Puiseux expansions
            of $f$ vanishing at the origin, grouped into conjugacy classes.
\STATE compute the maximal integrality exponent $E(f)$ as indicated in Section \ref{sect:max-expo}
\STATE  \{$(\beta_{i}, c_i)\}_{1 \le i \le r} = \MergeCoefficients(\Delta_{1}, \dots, \Delta_{r})$
\STATE $e_c = \max_{1 \le i \le r} c_i$
\STATE \{$f_0, f_1, \dots, f_r\} = \Splitting(E(f)+e_c, f)$, where $f_0$ corresponds to $\Delta_0$ and
            $f_1, \dots, f_r$ correspond to $\Delta_1, \dots, \Delta_r$
\STATE $m_0 = \deg(f_0)$, $m = n - m_0$
\STATE $\{(p'_{i}, e'_i)\}_{1 \le i \le r}= \IntegralBasisForWeierstrassPolynomial(\{(\Delta_i, {f}_i)\}_{1 \le i \le r}, \{(\beta_{i}, c_i)\}_{1 \le i \le r}\})$
\FOR{$i = 0, \dots, m_0-1$}
\STATE $p_i = y^i$, $e_i = 0$
\ENDFOR
\FOR{$i = 0, \dots, m-1$}
\STATE $p_{m_0 + i} = f_0 \cdot p'_i$, $e_{m_0 + i} = e'_i$
\ENDFOR
\RETURN $\left\{1=p_0, \frac{p_{1}}{x^{e_{1}}}\dots, \frac{p_{m-1}}{x^{e_{m-1}}}\right\}$
\end{algorithmic}
\end{algorithm}

\begin{remark}
\label{rmk conjugated}In the presence of conjugate singularities, we get a
better performance by handling groups of conjugate singularities simultaneously
(see also \citet[Section 2.3]{vanHoeij94}). Let $P\in\Sing(A)$ correspond to such
a group of singularities, where we  assume that no two of the singularities have
the same $X$-coordinate. Then we can find polynomials $q_{1},q_{2}\in K[X]$ such
that $P=\langle q_{1}(X),Y-q_{2}(X)\rangle$. We take $\alpha$ to be a root of
$q_{1}(X)$ and translate the singularity $(\alpha,q_{2}(\alpha))$ to the
origin. We compute the local contribution to the integral basis at the origin and
apply the inverse translation to the output. The least common denominator
of the resulting generators will be a power of $x-\alpha$. We rewrite all
generators using this denominator. Then we regard  $\alpha$ as a variable
and eliminate it from all numerators by successively  reducing each numerator
with respect to the numerators of smaller degree, using an elimination
ordering for which $\alpha > y > x$ (this works since we can always find
an integral basis which is defined over the original base field). Finally, we
replace $(x-\alpha)$ by $q_{1}(x)$ in all denominators.
\end{remark}

\begin{example}
\label{example conjugated singularities}
Let $f(X, Y) = Y^{3} - (X^{2}-2)^{2}\in\Q[X,Y]$. Then $\Sing(A)$ consists of the single prime
ideal $P = \langle X^{2} - 2, Y\rangle$, that is, the conjugate points $(\sqrt{2}, 0)$ and
$(-\sqrt{2}, 0)$ are the only singularities. Taking $\alpha = \sqrt{2}$, the recipe above
yields  the set $\left\{  1, y, \frac{y^{2}}{x-\alpha}\right\}$  of $K[x]$-module generators
for the minimal local contribution at $(\alpha,0)$. Hence,
$\left\{  1, y, \frac{y^{2}}{x^{2}-2}\right\}$ is an integral basis
for $\overline{A}$ over $K[x]$. So in this simple
case, we did not need to eliminate $\alpha$ from the numerators.
\end{example}

\begin{example}
\label{example conjugated singularities with reduction}
Let $f(X, Y) = (Y-X)^{3} - (X^{2}-2)^{2}\in\Q[X,Y]$. Now the singular locus
consists of the single prime ideal $P = \langle X^{2} - 2, Y-X \rangle$,
that is, the conjugate points $(-\sqrt{2}, -\sqrt{2})$ and $(\sqrt{2},\sqrt{2})$ are the
only singularities. Taking $\alpha= \sqrt{2}$ and computing the minimal
local contribution at $(\alpha, \alpha)$, we get
\[
\left\{  1, y, \frac{y^{2} - 2\alpha y + 2}{x-\alpha}\right\}  = \left\{  \frac{x-\alpha}{x-\alpha}, \frac{y(x-\alpha
)}{x-\alpha}, \frac{y^{2} - 2\alpha y + 2}{x-\alpha}\right\}.
\]
Reducing $y^{2} - 2\alpha y + 2$ with respect to $x-\alpha$ and $y(x-\alpha)$ as described above,
we get $y^{2} - 2x y + 2$ and, thus, the new set of $K[x]$-module generators $\left\{  1, y, \frac{y^{2} - 2x y +
2}{x-\alpha}\right\}  $. So in this example, the final result of our algorithm is
$\left\{  1, y, \frac{y^{2} - 2x y + 2}{x^{2}-2}\right\}$.
\end{example}

\begin{remark}
\label{remark rotation}
If there are two singularities with the same $X$-coordinate, we may apply a linear change of coordinates
of type $X \rightarrow X + a Y$, $a \in K$, to remedy the situation. After the integral basis has been
computed, we apply the inverse transformation $X \rightarrow X - a Y$ to the elements obtained.
This will give us a representation for the normalization of type $\overline{A} = \frac{1}{d_1}U_1$,
where the denominator $d_1$ may not depend on $x$ alone.
By choosing an element $d_2 \in K[x]$ of the conductor of $A$ (consider  the Jacobian ideal of $A$),
and computing the ideal quotient $U_2 = (d_2 U_1) : d_1$, we  arrive at a representation $\overline{A} = \frac{1}{d_2} U_2$
whose denominator does not depend on $y$. From this, we obtain an integral basis for $A$ over $K[x]$
following the recipe given in Remark \ref{rem:spec-int-basis-II}.

\end{remark}
\begin{example}
\label{example rotation}
Let $f(X, Y) = (Y^{2}-2)^{2} + X^{5} = Y^4-4Y^2+X^5+4 \in\Q[X,Y]$. Then $\Sing(A)$ consists of the single prime
ideal $P = \langle Y^{2} - 2, X\rangle$. That is, the conjugate points $(0, \sqrt{2})$ and
$(0, -\sqrt{2})$, which have the same $X$-coordinate,  are the only singularities.

Implementing the recipe from Remark \ref{remark rotation}, we apply the coordinate change $X \rightarrow X+Y$ which yields the
polynomial $g(X,Y) = f(X+Y, Y) = Y^5+5Y^4X+Y^4+10Y^3X^2+10Y^2X^3-4Y^2+5YX^4+X^5+4$ of $Y$-degree 5. The singular
locus of the coordinate ring of the plane curve defined by $g$ consists of the single prime ideal
$Q=\langle y^2-2, x+y\rangle$. We extend the base field from $\Q$ to $\Q(\sqrt{2})$, and consider the prime
ideals $Q_1 = \langle y+\sqrt{2}, x+y\rangle$ and $Q_2 = \langle y-\sqrt{2}, x+y\rangle$.
Then we apply the translation $X \rightarrow X + \sqrt{2}$, $Y \rightarrow Y - \sqrt{2}$ to move the point
$(-\sqrt{2}, \sqrt{2})$ corresponding to $Q_1$ to the origin. This yields the polynomial
$h(X,Y)=g(X + \sqrt{2}, Y - \sqrt{2}) = X^5+5X^4Y+10X^3Y^2+10X^2Y^3+5XY^4+Y^5+Y^4-4\sqrt{2}Y^3+8Y^2$. The decomposition of $h$
given by the Weierstrass preparation theorem is $h = h_0 h_1\in \Q(\sqrt{2})[[X]][Y]$, where the unit  $h_0\in \Q(\sqrt{2})[[X, Y]]$
is a factor of $Y$-degree 3,
and where $h_1$, which has $Y$-degree 2, is the single branch of $h$. The two Puiseux expansions of $h_1$ are
$\gamma_{1,2} = \pm \frac{\sqrt{2}}{4} i X^{5/2} + ...\ $. Hence, by Proposition \ref{prop loc int bas shape},
$\left\{1, \frac{y}{x^2}\right\}$is an integral basis for $h_1$. Incorporating the truncation $\overline{h}_0 =
Y^3+Y^2+5XY^2+(-4\sqrt{2})Y+8$  of $h_0$ to $X$-degree $1$ as in Proposition \ref{completion basis},
we get the local contribution
\[
\left\{1, y, y^2, y^3, \frac{\overline{h}_0(x,y) \cdot y}{x^2}\right\}
\]
to $\overline{\Q(\sqrt{2})[X, Y]/\langle h\rangle}$ at the origin. Translating the singularity back to its original location via
$X \rightarrow X - \sqrt{2}$, $Y \rightarrow Y + \sqrt{2}$ gives us the local contribution
\[
\left\{1, y-\sqrt{2}, (y-\sqrt{2})^2, (y-\sqrt{2})^3, \frac{\overline{h}_0(x-\sqrt{2}, y+\sqrt{2})\cdot (y + \sqrt{2})}{(x-\sqrt{2})^2}\right\}
\]

to $\overline{\Q(\sqrt{2})[X, Y]/\langle g\rangle}$ at $Q_1$, where the first four elements generate the same $\Q(\sqrt{2})[x]$-module as $\{1, y, y^2, y^3\}$.
To get the local contribution to $\overline{\Q[X, Y]/\langle g\rangle}$ at $Q$, we reduce the numerator of the last element
$\overline{h}_0(x-\sqrt{2}, y+\sqrt{2})\cdot (y + \sqrt{2})$ modulo $(x-\sqrt{2})^2$ to eliminate $\sqrt{2}$ from the numerator,
and then replace the denominator $(x-\sqrt{2})^2$ by $(x^2-2)^2$. We obtain the set
\[
\overline{\mathcal{B}}_t = \left\{1, y, y^2, y^3, \frac{q_4}{(x^2-2)^2}\right\},
\]
where
{\footnotesize
$$
q_4=y^4+\left(\frac{1}{4}x^3+\frac{7}{2}x+1\right)y^3+\left (\frac{1}{4}x^3+\frac{15}{2}x^2-\frac{3}{2}x-3\right)y^2+\left (\frac{11}{2}x^3-3x-2\right)y-\frac{1}{2}x^3+5x^2+3x-6.
$$
}

\vskip0.01cm\noindent
Applying the inverse transformation $X \rightarrow X - Y$ to $\overline{\mathcal{B}}_t$ yields the local contribution to  $\overline{A}$
at $P$. In fact, since $P$ is the only  prime ideal in $\Sing(A)$, we get a representation for all of $\overline{A}$. This
is of type  $\overline{A }= U / d_1$, with denominator $d_1=((x-y)^2-2)^2$. To change the denominator to a polynomial which
depends on $x$ alone, we follow the recipe given in Remark \ref{remark rotation}. Inspecting the  Jacobian ideal
$\langle x^4, 4y^3-8y \rangle$ of $A$, we see that $d_2=x^4$ is an element of  the conductor of $A$. Computing
the ideal quotient $U_2 = (d_2 U_1) : d_1$, we  arrive at the representation $\overline{A} = \frac{1}{x^2} \langle y^4-4y^2+4,
x^2y^2-2x^2, x^2 \rangle$. From this, proceeding as  in Remark \ref{rem:spec-int-basis-II}, we get the integral basis
\[
\overline{\mathcal{B}} = \left\{ 1, y, \frac{y^2-2}{x^2}, \frac{y^3-2y}{x^2} \right\}
\]
for $\overline{A}$ over $\Q[x]$.
\end{example}

\begin{remark}
\label{rem no rotation}
As indicated by the last example, applying a coordinate change to separate the $X$-coordinates of the singularities
requires extra computations which may be expensive, in particular in the case where the $X$-degree
of $f$ is considerably larger than its $Y$-degree. We sketch an alternative approach which avoids such
a coordinate change.

If we face more than one singularity on the line $X = \alpha$, we consider the translation $X\rightarrow X+\alpha$
in order to move the singularities to the line $X=0$. We get the polynomial $f_\alpha(X,Y) = f(X+\alpha, Y)$ and the
curve $C_\alpha$ defined by $f_\alpha$. We then decompose $f_\alpha$ as  $f_\alpha= g_0 g_{1}\cdots g_{s}$, where
$g_1, \dots, g_s$ are the irreducible factors of $f_\alpha$ in $K[[X]][Y]$ whose zeros on the line $X=0$ are singular
points of $C_\alpha$, and where $g_0$ is the product of the remaining factors.
Different from our convention so far, we now refer to $g_{1}, \dots, g_{s}$ as the \emph{branches} of $f_\alpha$.
Adapted versions of Propositions \ref{completion basis} and \ref{prop local int basis modified}
allow us to compute an integral basis for $K[[x]][y]=K[[X]][Y] /\langle f_\alpha\rangle$ over $K[[x]]$ in a way similar to that of the
previous discussion. In the case where $\alpha$ is $K$-rational, we may then apply the inverse translation $X\rightarrow X-\alpha$ to get an
integral basis for $K[[x-\alpha]][y]=K[[X-\alpha]][Y] /\langle f_\alpha\rangle$ over $K[[x-\alpha]] $. In the case of conjugate singularities, we may
handle these singularities simultaneously following a recipe similar to that of Remark \ref{rmk conjugated}.
\end{remark}

\begin{example}
\label{ex no rotation}
The polynomial $f(X, Y) = (Y^{2}-2)^{2} + X^{5} = Y^4-4Y^2+4+X^5 \in\Q[X,Y]$ from Example \ref{example rotation}
has the conjugate points $(0, \sqrt{2})$ and $(0, -\sqrt{2})$ as its only singularities.

We show how to handle
the singularities simultaneously, without applying a coordinate change first. The four Puiseux expansions
of $f$ are $\gamma_{1,2,3,4} = a \pm \frac{1}{4} a X^{5/2} + \frac{1}{32} a X^5 + \dots$, where $a$ is a root of $Z^2-2$.
Since the expansions are conjugate over $\Q((X))$, we see that $f$ is irreducible in $\Q[[X]][Y]$. That is, $f$
consists of precisely one branch.

We construct the integral basis for the branch by truncating the Pusieux expansions as in Algorithm \ref{alg:TruncateGeneral}.
We get the numerators $p_0 = 1$, $p_1 = y$, $p_2 = (y-\sqrt{2})(y+\sqrt{2})=y^2-2$, with $\upsilon_f(p_2)= 2$, and $p_3 = (y^2-2)y$,
with $\upsilon_f(p_3)= 2$. Since there is only one branch, we conclude as in Example \ref{example rotation} that the resulting
integral basis
\[
\overline{\mathcal{B}} = \left\{1, y, \frac{y^2-2}{x^2}, \frac{y^3-2y}{x^2}\right\}
\]
for the branch is already an integral basis for $\overline{A}$ over $\Q[x]$.
\end{example}

\section{Timings}

\label{sec timings}

We present timings to compare the computation of integral bases via
\begin{itemize}
\item the \textsc{Singular}{} implementation of our integral basis
algorithm\footnote{Column \textsc{Singular intbas} in the tables;},
\item  the \textsc{Singular}{} implementation of the local
normalization algorithm\footnote{column \textsc{Singular normal} in the
tables;} from Section \ref{sect:noem-via-loc},
\item the \textsc{Maple}{} \citep{maple}  implementation
of van Hoeij's algorithm\footnote{column \textsc{Maple} in the tables;},
and
\item the \textsc{Magma} \citep{Magma,ford1994implementing} implementation
of the Round $2$ algorithm\footnote{column \textsc{Magma} in the tables.}.
\end{itemize}

\noindent
We apply the algorithms to rings of type $A=\mathbb{Q}[X,Y]/\langle f\rangle$,
with polynomials $f$ as specified. All timings are in seconds, taken on an Intel Xeon CPU E5-2643
 with $24$ cores, $3.4$GHz, and $384$GB of RAM running a
Linux operating system. An asterisk ($\ast$) indicates that the computation did not finish
within 3600 seconds. At current state, parallel computations are used only for
the decomposition of the singular locus. A systematic parallelization of the
integral basis algorithm and a modular approach following the strategy of
\citet{MR3378860} is subject to ongoing work. Recall that for obtaining the
integral bases, singularities at infinity of the curve $\{f=0\}$ do not matter.

\vskip0.2cm
\subsection{One Singularity of Type \texorpdfstring{${\boldsymbol{A_{k}}}$}{A\_k}}
\label{timingsAk}

A plane curve with defining polynomial
$$
f(X,Y)=Y^{2}+X^{k+1}+Y^{d}, \ k\geq 1, \ d\geq 3,
$$
has exactly one singularity at the origin. This singularity is of type $A_{k}$.%

\[%
\begin{tabular}
[c]{|r|r|r|r|r|r|}\hline
\multicolumn{1}{|c|}{\multirow{2}{*}{$k$}} & \multicolumn{1}{|c|}{\multirow{2}{*}{$d$}}  & \multicolumn{2}{|c|}{\textsc{Singular}} & \multirow{2}{*}{\textsc{Maple}} &
\multirow{2}{*}{\textsc{Magma}}\\
&  & \textsc{intbas} & \textsc{normal} &  & \\\hline
$5$   & $10$  & $0$ & $0$ & $0$   & $0$\\
$5$   & $100$ & $0$ & $0$ & $1$   & $168$\\
$5$   & $500$ & $0$ & $1$ & $46$  & $\ast$\\
$50$  & $60$  & $0$ & $0$ & $1$   & $294$\\
$50$  & $100$ & $0$ & $1$ & $2$   & $10751$\\
$50$  & $500$ & $0$ & $0$ & $65$  & $\ast$\\
$90$  & $100$ & $0$ & $1$ & $3$   & $\ast$\\
$90$  & $500$ & $0$ & $1$ & $76$ & $\ast$\\
$400$ & $500$ & $0$ & $3$ & $237$ & $\ast$\\\hline
\end{tabular}
\ \
\]

\vskip0.2cm
\subsection{One Singularity of Type \texorpdfstring{${\boldsymbol{D_{k+1}}}$}{D\_(k+1)}}

\label{timingsDk}

A plane curve with defining polynomial
$$
f(X,Y)=X(X^{k-1}+Y^{2})+Y^{d}, \ k\geq 3, \ d\geq 3,
$$
has exactly one singularity at the origin. This singularity is of type $D_{k+1}$.

\[%
\begin{tabular}
[c]{|r|r|r|r|r|r|}\hline
\multicolumn{1}{|c|}{\multirow{2}{*}{$k$}} & \multicolumn{1}{|c|}{\multirow{2}{*}{$d$}}  & \multicolumn{2}{|c|}{\textsc{Singular}} & \multirow{2}{*}{\textsc{Maple}} &
\multirow{2}{*}{\textsc{Magma}}\\
&  & \textsc{intbas} & \textsc{normal} &  & \\\hline
$5$   & $10$  & $1$ & $0$ & $0$    & $0$\\
$5$   & $100$ & $1$ & $1$ & $0$    & $1683$\\
$5$   & $500$ & $3$ & $0$ & $29$   & $\ast$\\
$50$  & $60$  & $1$ & $1$ & $3$    & $312$\\
$50$  & $100$ & $1$ & $1$ & $8$    & $3480$\\
$50$  & $500$ & $3$ & $1$ & $490$  & $\ast$\\
$90$  & $100$ & $1$ & $1$ & $27$   & $\ast$\\
$90$  & $500$ & $3$ & $1$ & $1441$ & $\ast$\\
$400$ & $500$ & $4$ & $4$ & $\ast$ & $\ast$\\\hline
\end{tabular}
\]

\vskip0.2cm
\subsection{Ordinary Multiple Points}

\label{timingsOMP}

We consider random curves of degree $d$ with an ordinary $k$-fold point at the
origin and no other singularities. The defining polynomials were generated by the function
\texttt{polyDK} from the \textsc{Singular}{} library
\texttt{integralbasis.lib} (with random seed $1231$).

\[%
\begin{tabular}
[c]{|r|r|r|r|r|r|}\hline
\multicolumn{1}{|c|}{\multirow{2}{*}{$k$}} & \multicolumn{1}{|c|}{\multirow{2}{*}{$d$}}  & \multicolumn{2}{|c|}{\textsc{Singular}} & \multirow{2}{*}{\textsc{Maple}} &
\multirow{2}{*}{\textsc{Magma}}\\
&  & \textsc{intbas} & \textsc{normal} &  & \\\hline
$5$  & $10$ & $0$ & $2$    & $0$  & $0$\\
$15$ & $20$ & $0$ & $7784$ & $1$  & $4$\\
$15$ & $30$ & $1$ & $\ast$ & $21$ & $124$\\
$20$ & $25$ & $1$ & $\ast$ & $2$  & $18$\\
$20$ & $30$ & $2$ & $\ast$ & $17$ & $42$\\\hline
\end{tabular}
\]

\vskip0.2cm
\subsection{Curves With Many Singularities of Type \texorpdfstring{${\boldsymbol{A_{k-1}}}$}{A\_(k-1)}}
\label{timingsManyAk}

If $k$ is odd, the projective plane curves with defining polynomials
\[
X^{2k}+Y^{2k}+Z^{2k}+2(X^{k}Z^{k}-X^{k}Y^{k}+Y^{k}Z^{k})
\]
have precisely $3k$ singularities in $\mathbb P^2(\CC)$, all of type $A_{k-1}$  (see
\citet{Cogolludo1999}). We consider the affine parts of these curves obtained
by substituting $Z=X-2Y+1$ (these parts contain all singularities).

\[%
\begin{tabular}
[c]{|r|r|r|r|r|}\hline
\multicolumn{1}{|c|}{\multirow{2}{*}{$k$}} & \multicolumn{2}{|c|}{\textsc{Singular}} & \multirow{2}{*}{\textsc{Maple}} &
\multirow{2}{*}{\textsc{Magma}}\\
& \textsc{intbas} & \textsc{normal} &  & \\\hline
$5$  & $0$   & $1249$ & $1$   & $1$\\
$7$  & $1$   & $\ast$ & $8$   & $8$\\
$9$  & $20$  & $\ast$ & $35$  & $59$\\
$11$ & $102$ & $\ast$ & $297$ & $251$\\\hline
\end{tabular}
\
\]

\vskip0.2cm
\subsection{More General Singularities}
\label{timingsGeneral}

We now consider some examples of curves which have singularities of a type
other than $ADE$ or ordinary multiple points:

\begin{enumerate}

\item \label{example:pfister1} { $f=-X^{15}+21X^{14}-8X^{13}%
Y+6X^{13}+16X^{12}Y-20X^{11}Y^{2}+X^{12}-8X^{11}Y+36X^{10}Y^{2}-24X^{9}%
Y^{3}-4X^{9}Y^{2}+16X^{8}Y^{3}-26X^{7}Y^{4}+6X^{6}Y^{4}-8X^{5}Y^{5}%
-4X^{3}Y^{6}+Y^{8}$: one singularity at the origin with multiplicity $m=8$ and
delta invariant $\delta=42$, one node, and one set of $6$ conjugate nodes. }

\item \label{example:pfister2} { $f=(Y^{4}+2X^{3}Y^{2}+X^{6}%
+X^{5}Y)^{3}+X^{11}Y^{11}$: one singularity at the origin with $m=12$ and
$\delta=133$.}

\item \label{example:3Branches} {$f=(Y^{5}+Y^{4}X^{7}%
+2X^{8})(Y^{3}+7X^{4})(Y^{7}+2X^{12})(Y^{11}+2X^{18})+Y^{30}$: one singularity
at the origin with $m=26$ and $\delta=523$.}

\item \label{example:2BranchesDeg36} { $f=(Y^{15}+2X^{38}%
)(Y^{19}+7X^{52})+Y^{36}$: one singularity at the origin with $m=34$ and
$\delta=1440$.}

\item \label{example:2BranchesDeg100}{ $f=(Y^{15}+2X^{38}%
)(Y^{19}+7X^{52})+Y^{100}$: same type of singularity as (4), but of higher degree.}

\item \label{example:vanHoeijDeg40} { \label{exam:Hoeij1}
$f=Y^{40}+XY^{13}+X^{4}Y^{5}+X^{5}+2X^{4}+X^{3}$: one double point with
$\delta=2$ and one triple point with $\delta=19$ (see \citet[Section
6.1]{vanHoeij94}).}

\item \label{example:vanHoeijDeg100}{ $f=Y^{200}+XY^{13}+X^{4}%
Y^{5}+X^{5}+2X^{4}+X^{3}$: same type of singularities as (6), but higher degree.}

\item \label{example:4Branches} { $f=(Y^{35}+Y^{34}X^{7}%
+2X^{38})(Y^{33}+7X^{44})(Y^{37}+2X^{52})+Y^{110}$: one singularity at the
origin with $m=105$ and $\delta=6528$.}
\end{enumerate}
\[%
\begin{tabular}
[c]{|r|r|r|r|r|r|}\hline
\multicolumn{1}{|c|}{\multirow{2}{*}{No.}} & \multicolumn{1}{|c|}{\multirow{2}{*}{$Y$-degree}} & \multicolumn{2}{|c|}{\textsc{Singular}} & \multirow{2}{*}{\textsc{Maple}} &
\multirow{2}{*}{\textsc{Magma}}\\
&  & \textsc{intbas} & \textsc{normal} &  & \\\hline
(\ref{example:pfister1})        & $8$   & $1$    & $\ast$ & $0$    & $0$\\
(\ref{example:pfister2})        & $12$  & $8$    & $\ast$ & $1$    & $1$\\
(\ref{example:3Branches})       & $30$  & $16$   & $\ast$ & $4$    & $31$\\
(\ref{example:2BranchesDeg36})  & $36$  & $1$    & $\ast$ & $4$    & $59$\\
(\ref{example:2BranchesDeg100}) & $100$ & $1$    & $\ast$ & $28$   & $\ast$\\
(\ref{example:vanHoeijDeg40})   & $40$  & $0$    & $2$    & $0$    & $9$\\
(\ref{example:vanHoeijDeg100})  & $200$ & $2$    & $3$    & $10$   & $\ast$\\
(\ref{example:4Branches})       & $110$ & $\ast$ & $\ast$ & $\ast$ & $\ast$\\\hline
\end{tabular}
\
\]

\vskip0.2cm
In Example (\ref{example:4Branches}), \textsc{Singular} and \textsc{Maple} do
not finish due to the computation of the decomposition of the singular locus
of the curve (note that the criterion given in Proposition \ref{prop:one-singular-point}
does not apply since the curve has a singularity at infinity). See Section \ref{section:examplesDetails} for the timings of the
computation of the local contribution to the integral basis (normalization) at the origin.

\subsection{Summary}
\label{timingsSummary}

We note that in most cases,  our new algorithm outperforms the other algorithms by far.

\subsection{A More Detailed Analysis of Some of the Examples}
\label{section:examplesDetails}

The computation of an integral basis with our algorithm has two major
components. First, we decompose the singular locus into associated primes;
second, we compute the local contribution to the integral basis at each
prime and combine the results.
In \textsc{Maple}, a similar strategy is followed in that  all the
$X$\hyp{}coordinates of the singular points are computed first. With both
approaches, the first step may be time consuming. To
analyze the difference between the two approaches in more detail,
we provide timings for the computation of the integral basis at the
origin in examples where it is known to us that the origin is the only
singularity. This fact can be specified in \textsc{Singular} and
\textsc{Maple} by appropriate input options.%

\[%
\begin{tabular}
[c]{|c|r|r|r|r|}\hline
\multirow{2}{*}{Example} & \multicolumn{1}{|c|}{\multirow{2}{*}{$k$}} & \multicolumn{1}{|c|}{\multirow{2}{*}{$d$}} & \multicolumn{1}{|c|}{\textsc{Singular}} & \multirow{2}{*}{\textsc{Maple}}\\
&  &  & \multicolumn{1}{|c|}{\textsc{intbas}} & \\\hline
\ref{timingsAk}                                        & $5$   & $500$ & $0$  &   $44$\\
\ref{timingsAk}                                        & $50$  & $500$ & $0$  &   $58$\\
\ref{timingsAk}                                        & $400$ & $500$ & $0$  &   $235$\\
\ref{timingsDk}                                        & $5$   & $500$ & $2$  &   $24$\\
\ref{timingsDk}                                        & $50$  & $500$ & $2$  &   $251$\\
\ref{timingsDk}                                        & $400$ & $500$ & $2$  &   $\ast$\\
\ref{timingsOMP}                                       & $15$  & $30$  & $0$  &   $19$\\
\ref{timingsOMP}                                       & $20$  & $25$  & $0$  &   $2$\\
\ref{timingsOMP}                                       & $20$  & $30$  & $0$  &   $14$\\
\ref{timingsGeneral}\! (\ref{example:pfister2})        &       &       & $8$  &   $1$\\
\ref{timingsGeneral}\! (\ref{example:3Branches})       &       &       & $2$  &   $2$\\
\ref{timingsGeneral}\! (\ref{example:2BranchesDeg100}) &       &       & $1$  &   $13$\\
\ref{timingsGeneral}\! (\ref{example:4Branches})       &       &       & $15$ &   $1343$\\\hline
\end{tabular}
\
\]

\vskip0.2cm
\noindent
We compare the timings above with those in the previous tables and
observe first that for the examples in \ref{timingsAk}, the time required
for decomposing the singular locus can be neglected. For the examples in
\ref{timingsDk}, \textsc{Maple} spends plenty of time for the decomposition,
but the part consuming most of the time is nevertheless the computation of the
integral basis at the origin. For the examples in \ref{timingsOMP}, our algorithm
uses most of the time for decomposing the singular locus. The computation of
the integral basis at the origin is significantly faster than that in \textsc{Maple}.
From among the examples in \ref{timingsGeneral}, Example
\ref{timingsGeneral}\! (\ref{example:pfister2}) sticks out: while the time for the initial
decomposition is not significant, the computation of the integral basis at the origin
when running our algorithm in \textsc{Singular}{} is slower than
that using van Hoeij's algorithm in \textsc{Maple}. In this example, the
algorithm runs into an algebraic field extension of high degree. At current state, the
handling of such extensions in \textsc{Singular}{} is not optimal.

\bibliographystyle{elsarticle-harv}
\bibliography{mybib}

\end{document}